\newcounter{Empty}[section]
\numberwithin{equation}{section}
\newcommand{\R}{\mathbb{R}}
\newcommand{\Z}{\mathbb{Z}}
\newcommand{\T}{\mathbb{T}}
\newcommand{\abs}[1]{\left|#1\right|}
\newcommand{\twonorm}[1]{\Vert#1\Vert_2}
\newcommand{\inftynorm}[1]{\Vert#1\Vert_\infty}
\newcommand{\Rm}[1]{{\rm #1}}
\newcommand{\F}{\mathcal{F}}
\theoremstyle{plain}
\newtheorem{theorem}{Theorem}[section]
\newtheorem{claim}{Claim}[section]
\newtheorem{lemma}{Lemma}[section]
\newtheorem{corollary}{Corollary}[section]
\theoremstyle{definition}
\newtheorem{remark}{Remark}[section]
\begin{document}
\renewcommand{\baselinestretch}{1.2}

\title{Asymptotic Seed Bias in Respondent-driven Sampling}
\author{Yuling Yan\thanks{Department of Operations Research and Financial Engineerging, Princeton University, Princeton, NJ 08544, USA;  E-mail: \texttt{yulingy@princeton.edu}.}, 
	Bret Hanlon\thanks{Department of Biostatistics and Medical Informatics, University of Wisconsin-Madison, Madison, WI 53726, USA; E-mail: \texttt{bret.hanlon@wisc.edu}.},
	Sebastien Roch\thanks{Department of Mathematics, University of Wisconsin-Madison, Madison, WI 53792, USA; E-mail: \texttt{roch@math.wisc.edu}.}, Karl Rohe\thanks{Department of Statistics, University of Wisconsin-Madison, Madison, WI 53706, USA; E-mail: \texttt{karlrohe@stat.wisc.edu}.}}

\date{}
\maketitle
\vspace*{-0.3 in}
\begin{abstract}
Respondent-driven sampling (RDS) collects a sample of individuals in a networked population by incentivizing the sampled individuals to refer their contacts into the sample. This iterative process is initialized from some seed node(s). Sometimes, this selection creates a large amount of seed bias. Other times, the seed bias is small. This paper gains a deeper understanding of this bias by characterizing its effect on the limiting distribution of various RDS estimators. Using classical tools and results from multi-type branching processes \citep{kesten1966additional}, we show that the seed bias is negligible for the Generalized Least Squares (GLS) estimator and non-negligible for both the inverse probability weighted and Volz-Heckathorn (VH) estimators. In particular, we show that (i) above a critical threshold, VH converge to a non-trivial mixture distribution, where the mixture component depends on the seed node, and the mixture distribution is possibly multi-modal.  Moreover, (ii) GLS converges to a Gaussian distribution independent of the seed node, under a certain condition on the Markov process. Numerical experiments with both simulated data and empirical social networks suggest that these results appear to hold beyond the Markov conditions of the theorems.
\end{abstract}

\sloppy
\noindent {\it Keywords}: Limit distribution, Respondent-driven sampling, Galton-Watson process, Volz-Heckathorn estimator.

\tableofcontents{}
\newpage

\section{Introduction}
\label{sec_introduction}
Network sampling techniques, including web crawling, snowball sampling, and respondent-driven sampling (RDS), contact individuals in hard-to-reach populations by following edges in a social network.  This paper uses RDS as a motivating example \citep{heckathorn1997respondent}.  It is used by the Centers for Disease Control (CDC) and the Joint United Nations Programme on HIV/AIDS (UN-AIDS) to sample populations most at risk for HIV (injection drug users, sex workers, and men who have sex with men) \citep{HIVbehavioralSurveilance, johnston2013introduction}.  In the most recent survey of the literature \citep{white2015strengthening}, RDS had been applied in over 460 different studies, in 69 different countries.

An RDS sample is initialized with one or more ``seed individuals'' selected by convenience from the population. These individuals participate in the survey and are  incentivized to refer additional participants (often up to 3 or 5 participants) into the sample. This process iterates until reaching the target sample size or there are no referrals. All participants are incentivized to take a survey and an HIV test.  With this sample, we wish to estimate the proportion of individuals in the population that are HIV+. 

\begin{table}[b]
	\centering
	\caption{Summary of properties of IPW and GLS estimators.  In the columns, $m$ refers to the number of participants that the typical participant refers into the study and $\lambda_2$ is the second eigenvalue of the Markov transition matrix. }
	\label{table_previous}
	\begin{tabular}{|c|c|c|c|}
		\hline
		Result                        & Estimator & Low variance, i.e. $m<\lambda_2^{-2}$                                                 & High variance, i.e. $m>\lambda_2^{-2}$                                                     \\ \hline
		\multirow{2}{*}{Variance}     & IPW       & \begin{tabular}[c]{@{}c@{}}$O(n^{-1})$\\ \citep{rohe2019critical}\end{tabular}                                                    & \begin{tabular}[c]{@{}c@{}}$O(n^{2 \log_m \lambda_2})$\\ \citep{rohe2019critical}\end{tabular}                                  \\ \cline{2-4} 
		& GLS       & \multicolumn{2}{c|}{$O(n^{-1})$ \citep{roch2018generalized}}                                                                                                                       \\ \hline
		\multirow{2}{*}{Distribution} & IPW\&VH       & \begin{tabular}[c]{@{}c@{}}Asymptotically normal\\ \citep{li2017central}\end{tabular} & \begin{tabular}[c]{@{}c@{}}Non-trivial mixture \\ $[$Current paper$]$ \end{tabular} \\ \cline{2-4} 
		& GLS       & \multicolumn{2}{c|}{Asymptotically normal $[$Current paper$]$}                                                                                                        \\ \hline
	\end{tabular}
\end{table}

The Markov model for the RDS process has provided fundamental insight into RDS sampling \citep{salganik2004sampling, goel2009respondent, rohe2019critical}.  For example, nodes with more connections are more likely to be sampled \citep{levin2009markov}.  This creates bias and there are ways to adjust for it \citep{salganik2004sampling, volz2008probability}.   While the inverse probability weighted (IPW) estimator requires a normalizing constant that is unknown in practice, the Volz-Heckathorn (VH) estimator provides a way to estimate this normalizing constant \citep{volz2008probability}.  More recently, \cite{rohe2019critical} studied the variability of the IPW estimators and showed that there are two regimes (low variance and high variance).  This regime is determined by two parameters of the Markov process that is described in Section \ref{subsec_markovmodel}.  In brief,  let $\lambda_2$ be the second eigenvalue of the Markov transition matrix on the social network and let $m$ be the average number of referrals provided by each node.  When $m < \lambda_2^{-2}$, the variance of the IPW estimator decays at rate $n^{-1}$, where $n$ is the sample size.  However, when $m > \lambda_2^{-2}$, the variance of IPW decays at a slower rate.  Later, \cite{li2017central} showed that the VH and IPW estimators are asymptotically normal under the Markov model in the low variance regime. More recently, \cite{roch2018generalized} proposed a generalized least squares (GLS) estimator for the high variance regime and showed that the variance of this estimator is $O(n^{-1})$, even when $m>\lambda_2^{-2}$.  These previous results are summarized in Table \ref{table_previous}.

This paper studies the limit distribution of (i) the GLS estimator and (ii) the IPW estimator in the high variance regime.  These results also allow for the Volz-Heckathorn adjustment.  For technical reasons, our analysis of the GLS estimator is restricted to a special case of the Markov model that was first used to study RDS in \cite{goel2009respondent}.  

These technical results make many unrealistic assumptions which we discuss below.  In particular, the Markov model allows for resampling of individuals. The results are asymptotic in the sample size, while the population size is fixed. This creates extensive resampling.   Nevertheless, this model  provides fundamental insights into the properties of the estimators and these properties continue to hold under more realistic simulation models in Sections \ref{sec_sumulation} and \ref{section_analysis}.

\begin{figure}[t]
	\centering
	\includegraphics[height=11cm]{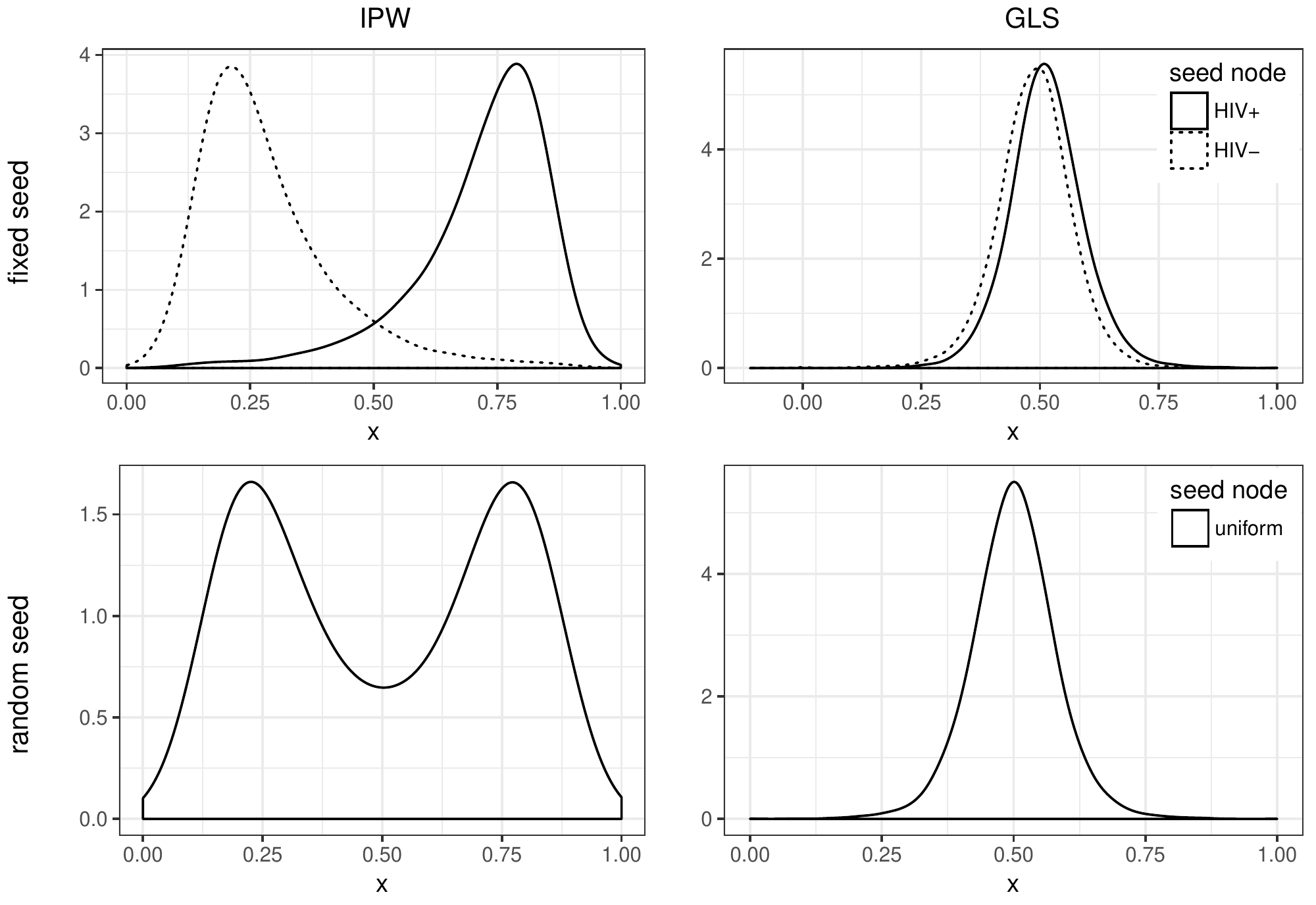}
	\caption{The model for this simulation is described in Section \ref{sec:motivate}. The two left panels show the distribution of sample proportion (i.e. the IPW estimator in this model). The two right panels show the distribution of GLS estimator. 
		Each panel in the top row has two curves corresponding to whether or not the seed node is HIV+.  The solid line gives the distribution of the estimator when the process is initialized with an HIV+ node.  The dashed line is initialized with an HIV- node.  In the bottom row, the seed participant is selected uniformly at random. This figure demonstrates how the limit distribution of the IPW estimator can have two modes which correspond to whether the seed is HIV+ or HIV-. Moreover, the figure suggests that the GLS estimator is asymptotically normal and the dependence on the seed node is negligible.}
	\label{fig_distributions_mean_gls}
\end{figure}

\subsection{A simple motivating example}\label{sec:motivate}
Here we consider a model studied in \cite{goel2009respondent}, which we refer to as the Blockmodel with $2$ blocks. In this example, the population that we wish to sample is equally divided into two groups: HIV+ and HIV-. The seed participant is selected from one of the two groups with equal probability. Each participant refers an iid number of offspring, generated from some offspring distribution. With probability $p$, the referred participant matches the HIV status of the participant that referred them.  With probability $1-p$, their statuses differ. Each referral is independent, conditional on the status of the referring participant. Using a sample generated in this way, we wish to estimate the proportion of the population that is HIV+ (in this case, the true proportion is $0.5$). 

Figure \ref{fig_distributions_mean_gls} displays a motivating simulation from this Blockmodel with $2$ blocks.  Each sample size is 1000 individuals, sampled from the Blockmodel with $p=.95$ and offspring distribution $1+\Rm{Binomial}(2,0.5)$.  For each sample of 1000, we construct both sample proportion (equivalent to the IPW estimator, see Section \ref{sec_ipw}) and GLS estimator.  This process is repeated 10000 times. Figure \ref{fig_distributions_mean_gls} displays a kernel density estimate of the resulting distribution. 

\subsection{Main contributions}
Many RDS papers discuss the ``bias from seed selection''.  Section \ref{subsec_sample_mean} shows that the IPW and VH estimators have a limit distribution and this limit distribution depends on where the process is initialized (i.e. the ``seed'' node).  If the seed node is randomized, then in simulations, the limit distribution of the IPW and VH estimators can have multiple modes, where each mode corresponds to a different set of initial conditions.  The limit results for the IPW and VH estimators highlight how, conditioned on the seed node, the bias of these estimators decays at the same rate as the variance. So, unconditional on the seed node, this can create multiple modes in the limit distributions of the IPW and VH estimators.  Similarly to classical results in multitype branching process theory \citep{kesten1966additional}, the exact limit distribution does not appear to have a concise and easily interpretable closed form. 

While the IPW and VH estimators are not asymptotically normal in the high variance regime, Section \ref{subsec_gls} shows that the GLS estimator is asymptotically normal in this regime and this limit distribution does not depend on where the process is initialized. This pair of results provides additional insight into the notions of ``bias'' and ``variance'' for network sampling.  In particular, the GLS estimator is the linear estimator with the smallest variance and that measure of variance includes the variability that comes from selecting the seed node (i.e. from the stationary distribution of the Markov process).  Hence, it adjusts for the seed selection. Another way of saying this is that the GLS estimator reduces ``the bias from seed selection''.  This blurring of the divide between ``variance" and ``bias from seed selection'' highlights one potential problem of conditioning on the seed node in a bootstrap resampling procedure \citep{baraff2016estimating};  in the high variance regime, conditioning on the seed node removes a large source of variability in the VH estimator. 

\section{Background and notation}
\label{section_background} 
This section (i) defines the Markov model, (ii) illustrates how this model is particularly tractable when the underlying network is a Blockmodel \citep{White1976Social}, and (iii) defines the IPW, VH, and GLS estimators.

\subsection{Markov model}
\label{subsec_markovmodel}
The Markov model consists of (1) a social network represented as a graph, (2) a Markov transition matrix on the nodes of the graph, (3) a referral tree to index the Markov process on the graph, and finally, (4) a node feature defined for each node in the graph.  Each of these are defined below.

The results in this paper allow for an undirected, weighted graph.  Let $G=(V,E)$ be a graph with vertex set $V=\{1,\ldots,N\}$ containing the people and edge set $E=\{(i,j):i,j\in V \ \Rm{are} \ \Rm{connected} \}$ containing the friendships. 
Let $w_{ij}$ be the weight of the edge $(i,j)\in E$.  For notational convenience, define $w_{ij}=0$ if $(i,j)\notin E$. If the graph is unweighted, define $w_{ij}=1$ for all $(i,j)\in E$. Throughout this paper, the graph is undirected (i.e. $w_{ij}=w_{ji}$ for all pairs $(i,j)$). Define the degree of node $i$ as $\mathrm{deg}(i)=\sum_{j} w_{ij}$ and the volume of the graph as $\mathrm{vol}(G)=\sum_{i} \mathrm{deg}(i)$. For simplicity, $i\in G$ is used synonymously with $i \in V$. Define the Markov transition matrix $\bP\in\R^{N\times N}$ as
\begin{equation}
\label{eq_tran_matrix}
P_{ij}=\frac{w_{ij}}{\mathrm{deg}(i)}.
\end{equation}
Since $G$ is undirected, $\bP$ is a reversible Markov transition matrix with a stationary distribution $\bpi: G\rightarrow \R$ with $\pi(i)=\mathrm{deg}(i)/\mathrm{vol}(G)$. 

The referral tree is a rooted tree, i.e. a connected graph with $n$ nodes, no cycles, and a vertex $0$. This tree, $\T$, can be random (a Galton-Watson tree with expected offspring number $m$) or nonrandom (an $m$-tree, where each node has exactly $m$ offspring). If $\T$ is randomly generated, then the Markov process is conditioned on the tree. For simplicity, $\sigma\in\T$ is used synonymously with $\sigma$ belonging to the vertex set of $\T$. The seed participant is the root vertex $0$ in $\T$. For each non-root node $\sigma\in\T$, denote $p(\sigma)\in\T$ as the parent of $\sigma$ (i.e. the node one step closer to the root).

Assume that the nodes are sampled with a Markov process that is indexed by $\T$: each node $\sigma\in\T$ corresponds to an individual $X_\sigma$ sampled from the population $G$, and an edge $(\sigma,\tau)$ of $\T$ denotes that the sampled individual $X_\sigma$ referred the individual $X_\tau$ into the sample. Mathematically, let $\{X_\sigma^{(\cdot)}: \sigma\in\T\}$ be a tree-indexed Markov process on the individuals from the social network $G$:
\begin{equation*}
\PP(X_\sigma^{(\cdot)}=j \mid X_{p(\sigma)}^{(\cdot)}=i, X_\tau^{(\cdot)}:\tau \in \mathscr{D}(\sigma)^c)=\PP(X_\sigma^{(\cdot)}=j\mid X_{p(\sigma)}=i)=P_{ij},
\end{equation*}
where $\mathscr{D}(\sigma) \subset \T$ denotes the set of $\sigma$ and all its descendants in $\T$. The superscript $(\cdot)$ indicates the initial condition: if the superscript is some $i\in G$, $X_0$ is initialized from $i$; if the superscript is some distribution $\bnu: G\to \R$ (e.g. the stationary distribution $\bpi$ of $\bP$), $X_0$ is initialized from $\bnu$. When the initial state does not matter, we leave off the superscript. Following \cite{benjamini1994markov}, we call this process a $(\T,\bP)$-walk on $G$.

In a special case, $\T$ can be the chain graph ($ 0 -  1 -2 -3- \dots $); this results in the model being a Markov chain. Just as a chain graph  indexes a Markov chain, the graph $\T$ provides the indexing in this model. For simplicity, $\sigma\in\T$ is used synonymously with $\sigma$ belonging to the vertex set of $\T$. The seed participant is root vertex $0$ in $\T$. For each non-root node $\sigma\in\T$, denote $p(\sigma)\in\T$ as the parent of $\sigma$ (i.e. the node one step closer to the root). Assume that the nodes are sampled with a Markov process that is indexed by $\T$.

For each node $i\in G$, let $y(i)$ denote some characteristic of this node, for example whether $i$ is HIV+ or HIV-.  Sometimes we regard $\by$ as a vector in $\R^N$, where $N$ is the number of nodes in $G$. We want to estimate the population average $\mu_{\mathsf{true}}=\sum_{i \in G} y(i)/N$
by the RDS sample $\{y(X_\sigma):\sigma\in\T\}$. 

\subsection{A special case: Blockmodel} 
\label{section_rank_k}
Consider $G$ as coming from a Blockmodel with $k$ blocks \citep{White1976Social}. That is, each node $i\in G$ is assigned to a block with $b(i) \in \{1,\ldots,k\}$, where each block $j$ contains $N/k$ nodes. If $b(i) = b(j)$, then $w_{i\ell} = w_{j \ell}$ for all $\ell \in \{1, \dots N\}$. Further suppose that if $b(i)=b(j)$, then $y(i)=y(j)$. The Stochastic Blockmodel \citep{Holland1983Stochastic} is derived from this model.

The idea behind a Blockmodel with $k$ blocks is clear: people in the same block share the same feature and the same friendship patterns. \cite{goel2009respondent} studied RDS with this model. The motivating example in Section \ref{sec_introduction} also uses a Blockmodel with $2$ blocks. 

Let $\mathcal{W}\in\R^{k\times k}$ denote the weight matrix between blocks, where $\mathcal{W}_{b(i),b(j)}=w_{ij}$.
Define the corresponding Markov transition matrix between blocks $\mathcal{P}\in \R^{k\times k}$ from $\mathcal{W}$ similarly to \eqref{eq_tran_matrix}. Since $\mathcal{W}$ is symmetric, $\mathcal{P}$ is reversible. 

Let $\{B_\sigma^{(\cdot)}: \sigma\in\T\}$ denote a Markov process indexed by $\T$, where the state space is the block labels $\{1,\ldots,k\}$ and the transition matrix is $\mathcal{P}$. 
The superscript of $B_\sigma^{(\cdot)}$ indicates the initial state of $B_0$ and is in correspondence with the initial state $X_0$ of the Markov process over $G$: if $X_0$ is initialized at $i\in G$, $B_0$ is initialized at $z(i)$ and the superscript is $z(i)$; if $X_0$ is initialized from any distribution $\bnu: G\to\R$, $B_0$ is initialized from the distribution $\bmu: \{1,\ldots,k\}\to\R$ with $\mu_j=\sum_{i\in G:z(i)=j} \nu_i$. For any $\{\sigma_{i_1}, \ldots, \sigma_{i_s}\}\subset\T$ and $b_{i_1},\ldots,b_{i_s}\in\{1,\ldots,k\}$,
\begin{equation}
\label{eq:blockmodel}
\PP(B_{\sigma_{i_1}}^{(\cdot)}=b_{i_1},\ldots,B_{\sigma_{i_s}}^{(\cdot)}=b_{i_s})=\PP(b(X_{\sigma_{i_1}}^{(\cdot)})=b_{i_1},\ldots,b(X_{\sigma_{i_s}}^{(\cdot)})=b_{i_s}).
\end{equation}
The proof of \eqref{eq:blockmodel} is in Appendix \ref{section_proof_eq2}. So $\{B_\sigma^{(\cdot)}: \sigma\in\T\}$ is equal in distribution to $\{b(X_\sigma^{(\cdot)}): \sigma\in\T\}$. Instead of studying the Markov process $\{X_\sigma^{(\cdot)}: \sigma\in\T\}$ in Section \ref{subsec_markovmodel}, we study the Markov process $\{B_\sigma^{(\cdot)}: \sigma\in\T\}$. Intuitively, the original process $\{X_\sigma^{(\cdot)}: \sigma\in\T\}$ keeps track of the individuals while $\{B_\sigma^{(\cdot)}: \sigma\in\T\}$ keeps track of some feature of the individuals.
This time the node feature $\by\in\R^N$ is replaced by the block feature $\bb\in \R^k$ and the Markov transition matrix is replaced by the Markov transition matrix between blocks $\mathcal{P}\in\R^{k\times k}$.

The Blockmodel is a special case of the Markov model in Section \ref{subsec_markovmodel}. In this paper, Theorem \ref{thm_mut}, Corollary \ref{cor_decay} and \ref{cor_VH} apply to the Markov model. Theorem \ref{thm_gls} and Corollary \ref{cor_glsVH} only apply to the Blockmodel with $2$ blocks.

\subsection{Estimators}
\label{sec_estimators}
Denote $\EE_{\bpi} (y)=\sum_{i}\pi(i) y(i)$. The theoretical results in this paper  study two estimators defined in this section. They are unbiased estimators of $\EE_{\bpi} (y)$. When applying inverse probability weighting (in Section \ref{sec_ipw}), these estimators become unbiased estimators of $\mu_{\mathsf{true}}$ instead. Further, the VH adjustment provides a way to estimate the inverse probability weights. 

\paragraph{Sample average}
\label{subsec_sampleaverage}
The RDS sample average is
\begin{equation}
\hat{\mu}^{(\cdot)}=\frac{1}{n} \sum_{\sigma\in\T} y(X_\sigma^{(\cdot)}).
\end{equation}
When $X_0$ is initialized from $\bpi$, $\hat{\mu}^{(\bpi)}$ is an unbiased estimator of $\EE_{\bpi} (y)$. When $X_0$ is initialized from $i\in G$, $\hat{\mu}^{(i)}$ is an asymptotically unbiased estimator of $\EE_{\bpi} (y)$ (see Claim \ref{lem:trivial}).

\paragraph{GLS estimator}
\label{subsec_glsestimator}
\cite{roch2018generalized} proposed generalize least squares (GLS) in RDS to reduce the variance, particularly  in the high variance regime.  The GLS estimator is the weighted average 
\begin{equation}
\hat{\mu}^{(\cdot)}_{\mathsf{GLS}}=\sum_{\sigma\in\T} w_\sigma^\ast y(X^{(\cdot)}_\sigma)
\end{equation}
where $\bw^\ast$ minimizes the variance of the weighted average initialized from $\bpi$
\begin{equation}
\label{eq:glsdef}
\bw^\ast=\arg\min_{\bw} \,\mathsf{Var}\left(\sum_{\sigma\in\T} w_\sigma y(X^{(\bpi)}_\sigma)\right) \quad s.t. \quad \sum_{\sigma\in\T} w_\sigma=1.
\end{equation}
When $X_0$ is initialized from $\bpi$, $\hat{\mu}^{(\bpi)}_{\mathsf{GLS}}$ is an unbiased estimator of $\EE_{\bpi} (y)$. When $X_0$ is initialized from $i\in G$, $\hat{\mu}^{(i)}_{\mathsf{GLS}}$ is an \emph{asymptotically} unbiased estimator of $\EE_{\bpi} (y)$ (see Theorem \ref{thm_gls}).

\subsection{Inverse probability weighting}
\label{sec_ipw}
In general $\mu_{\mathsf{true}}\neq\EE_{\bpi} (y)$. So $\hat{\mu}$ and $\hat{\mu}_{\mathsf{GLS}}$ are biased estimators for $\mu_{\mathsf{true}}$. 
Inverse probability weighting can adjust for this bias. Define
$y^{\bpi}(i)=y(i)/(N\pi(i))$. The
IPW estimator and GLS estimator with IPW adjustment are the sample average and the GLS estimator of $y^{\bpi}(X_\sigma)$'s:
$$\hat{\mu}_{\mathsf{IPW}}=\frac{1}{n} \sum_{\sigma\in\T} y^{\bpi}(X_\sigma)= \frac{1}{n} \frac{\mathrm{vol}(G)}{N} \sum_{\sigma \in \T} \frac{y(X_\sigma)}{\mathrm{deg}(X_\sigma)}, \ \Rm{and}$$
$$\hat{\mu}_{\mathsf{IPW,GLS}}=\sum_{\sigma\in\T} w^{\bpi}_\sigma y^{\bpi}(X_\sigma)= \frac{\mathrm{vol}(G)}{N} \sum_{\sigma \in \T} w^{\bpi}_\sigma \frac{y(X_\sigma)}{\mathrm{deg}(X_\sigma)}.$$
When $X_0$ is initialized from the stationary distribution $\bpi$, they are unbiased estimates of $\mu_{\mathsf{true}}$.
However, computing these two estimators requires the average node degree $\mathrm{vol}(G)/N$, which is typically not available in practice. 

The popular VH estimator replaces $\mathrm{vol}(G)/N$ in the IPW estimator with the harmonic mean of the degrees of the RDS samples \citep{volz2008probability}. Define 
$$ H^{-1}=\frac{1}{n} \sum_{\sigma \in \T} \frac{1}{\mathrm{deg}(X_{\sigma})}, \qquad \hat{\pi}(i)=H^{-1} \mathrm{deg}(i), \qquad y^{\hat{\bpi}}(i)=\frac{y(i)}{\hat{\pi}(i)}.$$
The VH estimator is the sample average of $y^{\hat{\bpi}}(X_\sigma)$'s. The GLS estimator with VH adjustment uses a similar reweighting, but replaces $\mathrm{vol}(G)/N$ with a GLS estimate of $\EE_{\bpi} (1/\mathrm{deg}(i))$ \citep{roch2018generalized}.

The VH estimator and GLS estimator with VH adjustment are two asymptotically unbiased estimators of $\mu_{\mathsf{true}}$ under the $(\T,\bP)$-walk on $G$. Theorem \ref{thm_mut} and \ref{thm_gls} study the limit distribution of the sample average and GLS estimator. By a simple transformation (defining a new node function $y^{\bpi}(i)=y(i)/(N\pi(i))$), these results can also be applied to the IPW estimator and the GLS estimator with IPW adjustment. Corollary \ref{cor_VH} and \ref{cor_glsVH} extend these results to the VH estimator and GLS estimator with VH adjustment. 

\subsection{Additional notation}
For two sequences $a_n$ and $b_n$, define the following notation: (i) $a_n=O(b_n)$ if and only if $\abs{a_n}$ is bounded above by $b_n$ (up to constant factor) asymptotically, i.e. $\exists k>0, \ \exists n_0, \ \forall n>n_0, \abs{a_n}\leq kb_n$.
(ii) $a_n=\Theta(b_n)$ if and only if $a_n$ is bounded both above and below by $b_n$ (up to constant factors) asymptotically, i.e.
$\exists k_1>0, \ \exists k_2>0, \ \exists n_0, \ \forall n>n_0, \  k_1 b_n\leq a_n \leq k_2 b_n$.

\section{Main results}
\label{sec_main}
This section shows that, after proper scaling, the GLS estimator and the sample average both have a limit distribution. For GLS, the limit distribution is a normal distribution. For the sample average, on the other hand, the limit distribution is a non-trivial mixture distribution, where the mixture component is determined by the seed node.  This mixture distribution can be multi-modal as illustrated in Figure \ref{fig_distributions_mean_gls}. These results can be further extended to the GLS estimator with VH adjustment and to the VH estimator respectively.

We will need the following standard lemma (e.g.~\cite[Lemma 12.2]{levin2009markov}) which provides the eigendecomposition of the Markov transition matrix $\bP$.

\begin{lemma}
\label{lem_eigendecomposition}
	Let $\bP$ be a reversible Markov transition matrix on the nodes in $G$ with respect to the stationary distribution $\bpi$.  The eigenvectors of $\bP$, denoted as $\bbf_1, \dots, \bbf_{N}$, are real valued functions of the nodes $i \in G$ and orthonormal with respect to the inner product 
	\begin{equation} \label{def:inner}
	\langle \bbf_a, \bbf_b \rangle_{\bpi} = \sum_{i \in G} f_a(i) f_b(i) \pi(i).
	\end{equation}
	If $\lambda$ is an eigenvalue of $\bP$, then $|\lambda|\le 1$.  The eigenfunction $\bbf_1$ corresponding to the eigenvalue $1$ can be taken to be the constant vector $\bm{1}$.
\end{lemma}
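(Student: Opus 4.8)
The plan is to reduce the claim to the spectral theorem for real symmetric matrices, using the symmetrization trick that reversibility makes available. Reversibility of $\bP$ with respect to $\bpi$ is the detailed balance relation $\pi(i) P_{ij} = \pi(j) P_{ji}$ for all $i,j \in G$. Writing $\bm{D} = \mathrm{diag}(\pi(1), \ldots, \pi(N))$, which is positive definite because each $\pi(i) > 0$, I would introduce the conjugated matrix $\bm{A} = \bm{D}^{1/2} \bP \bm{D}^{-1/2}$ and first verify that it is symmetric. Its $(i,j)$ entry is $A_{ij} = \sqrt{\pi(i)/\pi(j)}\, P_{ij}$, and detailed balance gives $\sqrt{\pi(i)/\pi(j)}\, P_{ij} = \sqrt{\pi(j)/\pi(i)}\, P_{ji} = A_{ji}$, so $\bm{A}$ is a real symmetric matrix.

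Next I would apply the spectral theorem to $\bm{A}$, obtaining an orthonormal basis $\bphi_1, \ldots, \bphi_N$ of real eigenvectors in the standard Euclidean inner product, with real eigenvalues $\lambda_1, \ldots, \lambda_N$. Since $\bm{A}$ is similar to $\bP$ via $\bm{D}^{1/2}$, the matrix $\bP$ has the same (real) eigenvalues, and the change of variables $\bbf_a = \bm{D}^{-1/2} \bphi_a$ carries $\bm{A} \bphi_a = \lambda_a \bphi_a$ into $\bP \bbf_a = \lambda_a \bbf_a$, producing the real-valued eigenfunctions of $\bP$. To recover the stated orthonormality, I would substitute $\phi_a(i) = \sqrt{\pi(i)}\, f_a(i)$ into the Euclidean relation $\sum_{i} \phi_a(i)\phi_b(i) = \delta_{ab}$; this becomes $\sum_{i} \pi(i) f_a(i) f_b(i) = \delta_{ab}$, which is exactly $\langle \bbf_a, \bbf_b \rangle_{\bpi} = \delta_{ab}$ under the definition in \eqref{def:inner}.

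For the remaining two assertions, the eigenvalue bound follows directly from $\bP$ being stochastic: given any eigenpair $(\lambda, \bm{v})$, choosing a coordinate $i$ with $|v_i|$ maximal and using $\lambda v_i = \sum_j P_{ij} v_j$ gives $|\lambda|\,|v_i| \le \sum_j P_{ij}|v_j| \le |v_i|$, hence $|\lambda| \le 1$. Finally, since each row of $\bP$ sums to one, $\bP \bm{1} = \bm{1}$, so the constant vector is an eigenvector with eigenvalue $1$; its squared $\bpi$-norm is $\langle \bm{1}, \bm{1}\rangle_{\bpi} = \sum_i \pi(i) = 1$, so $\bm{1}$ is already normalized and, completing the eigenbasis of the eigenvalue-$1$ eigenspace to contain it, may be taken as $\bbf_1$.

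I expect the only genuinely load-bearing step to be the symmetrization: the entire argument rests on detailed balance forcing $\bm{A}$ to be symmetric, which is precisely where the reversibility hypothesis is used and which is what allows the real spectral theorem to apply. Everything after that is bookkeeping with the conjugation $\bbf_a = \bm{D}^{-1/2}\bphi_a$.
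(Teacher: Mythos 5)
Your proof is correct; the paper offers no proof of this lemma, deferring to Lemma 12.2 of \cite{levin2009markov}, and your argument---symmetrizing to $\bm{D}^{1/2}\bP\bm{D}^{-1/2}$ via detailed balance, applying the real spectral theorem, and conjugating back to convert Euclidean orthonormality into $\langle\cdot,\cdot\rangle_{\bpi}$-orthonormality---is exactly the standard proof given there. The eigenvalue bound via the maximum-coordinate argument and the identification of $\bm{1}$ as a normalized eigenvector for eigenvalue $1$ are likewise the standard steps, so nothing is missing.
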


Assume that the eigenvalues of $\bP$ are $$\abs{\lambda_1}\geq\abs{\lambda_2}\geq\cdots\geq\abs{\lambda_N}.$$  
Since it is a Markov transition matrix, its largest eigenvalue is $\lambda_1=1$.
Let $\bbf_i$ be the eigenvector corresponding to $\lambda_i$, normalized as in Lemma \ref{lem_eigendecomposition}. The eigenvector $\bbf_1$ corresponding to $\lambda_1$ is taken to be the constant vector $\bm{1}$. Expanding the node feature $y\in\R^N$ in the eigenbasis yields
\begin{equation}
\label{eq_eigendecompose}
\by=\sum_{j=1}^{N} \langle \by,\bbf_j \rangle_{\bpi} \bbf_j.
\end{equation} 

\subsection{Results for the sample average and the IPW and VH estimators}
\label{subsec_sample_mean}
This section shows that the sample average, IPW and VH estimators have a limit distribution and that this limit distribution in fact depends on where the process is initialized (i.e. the ``seed'' node).

For each node $\sigma\in\T$, let $\abs{\sigma}$ be the distance of $\sigma$ from the root $0$. Define $\{X_\sigma:\sigma\in\T,\abs{\sigma}=t\}$ as the individuals in the $t$-th generation of the sample. Denote the sample average up to generation $t$ as $\hat{\mu}_t$. Superscripts on $\hat \mu$ will denote how $X_0$ is initialized. 

Theorem \ref{thm_mut} studies the limit distribution of the sample average $\hat{\mu}_t^{(i)}$. Recall that the sample average of RDS samples is $\hat{\mu}^{(\cdot)}=n^{-1} \sum_{\sigma\in\T} y(X_\sigma^{(\cdot)}).$

\begin{theorem}
	\label{thm_mut}
	Assume the eigenvalues of the transition matrix $\bP$ are 
	\begin{equation}
	\label{eq_eigenvalue}
	1=\lambda_1>\lambda_2>\abs{\lambda_3}\geq\cdots\geq\abs{\lambda_N}.
	\end{equation}
	Assume $\T$ is an $m$-tree. When $m>\lambda_2^{-2}$, there exist a random variable $X^{(i)}\in L^2$ such that
	\begin{equation}
	\label{eq_mu_converge}
	\lambda_2^{-t}\left[\hat{\mu}_t^{(i)}-\EE_{\bpi} (y)\right] \rightarrow X^{(i)}
	\end{equation}
	almost surely and in $L^2$ as $t\to\infty$, and
	\begin{equation}
	\label{eq_different_expectation}
	\EE X^{(i)}=\frac{(m-1)\lambda_2}{m\lambda_2-1} 
	\,\langle \by,\bbf_2 \rangle_{\bpi} 
	\,f_2(i).
	\end{equation}
	Moreover, if $\langle \by, \bbf_2 \rangle_{\bpi}\neq 0$, then $\mathsf{Var}(X^{(i)})>0$ for any $i=1,\ldots,N$.
\end{theorem}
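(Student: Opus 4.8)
The plan is to diagonalize the problem in the eigenbasis of $\bP$ and reduce everything to the behavior of the generation sums $S_s^{(j)} := \sum_{|\sigma|=s} f_j(X_\sigma^{(i)})$. Expanding $\by$ as in \eqref{eq_eigendecompose}, using $\bbf_1=\bm{1}$ and $\langle\by,\bbf_1\rangle_{\bpi}=\EE_{\bpi}(y)$, and noting that the $m$-tree has $m^s$ nodes in generation $s$ (so the $j=1$ term cancels the centering), the centered sample average becomes
\[
\hat\mu_t^{(i)}-\EE_{\bpi}(y)=\frac{m-1}{m^{t+1}-1}\sum_{j=2}^{N}\langle\by,\bbf_j\rangle_{\bpi}\sum_{s=0}^{t}S_s^{(j)}.
\]
The engine of the proof is that, because $\bP\bbf_j=\lambda_j\bbf_j$ and the process is a tree-indexed Markov chain, $\EE[f_j(X_\sigma)\mid X_{p(\sigma)}]=\lambda_j f_j(X_{p(\sigma)})$, so $M_s^{(j)}:=(m\lambda_j)^{-s}S_s^{(j)}$ is a martingale in the generation filtration with $\EE M_s^{(j)}=f_j(i)$.

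First I would isolate the dominant mode $j=2$. Organizing the pairs $(\sigma,\sigma')$ in generation $s$ by the level $r$ of their most recent common ancestor, and using conditional independence of the two subtrees given the ancestor together with $\EE[f_j(X_\sigma)\mid X_\rho]=\lambda_j^{\,s-r}f_j(X_\rho)$, a second-moment computation gives
\[
\EE\big[(S_s^{(j)})^2\big]=m^s(\bP^s f_j^2)(i)+(m-1)\sum_{r=0}^{s-1}m^{2s-r-1}\lambda_j^{2(s-r)}(\bP^r f_j^2)(i),
\]
with $f_j^2$ the entrywise square. Since $m\lambda_2^2>1$ in the high-variance regime, dividing by $(m\lambda_2)^{2s}$ shows $\sup_s\EE[(M_s^{(2)})^2]<\infty$, so by Doob's $L^2$ martingale convergence theorem $M_s^{(2)}\to W$ almost surely and in $L^2$, with $\EE W=f_2(i)$. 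Writing $\sum_{s=0}^{t}S_s^{(2)}=(m\lambda_2)^t\sum_{k=0}^{t}(m\lambda_2)^{-k}M_{t-k}^{(2)}$ and applying the Toeplitz lemma to these geometric weights (summing to $\tfrac{m\lambda_2}{m\lambda_2-1}$), together with the prefactor $\lambda_2^{-t}\tfrac{m-1}{m^{t+1}-1}(m\lambda_2)^t=\tfrac{(m-1)m^t}{m^{t+1}-1}\to\tfrac{m-1}{m}$, yields the candidate limit $X^{(i)}=\frac{(m-1)\lambda_2}{m\lambda_2-1}\langle\by,\bbf_2\rangle_{\bpi}W$; taking expectations gives \eqref{eq_different_expectation}.

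Next I would show the subdominant modes $j\ge3$ contribute nothing. Feeding the second-moment formula into $\big\|\sum_{s\le t}S_s^{(j)}\big\|_2\le\sum_{s\le t}\|S_s^{(j)}\|_2$ and treating the cases $m\lambda_j^2<1$, $=1$, $>1$ separately, the $j$-th term of $\lambda_2^{-t}[\hat\mu_t^{(i)}-\EE_{\bpi}(y)]$ is $O(\rho_j^{\,t})$ (up to a harmless $\sqrt t$ in the boundary case) with $\rho_j<1$: when $m\lambda_j^2<1$ the rate is $(m\lambda_2^2)^{-t/2}$, so the hypothesis $m>\lambda_2^{-2}$ is exactly what forces decay, and when $m\lambda_j^2>1$ the rate is $(|\lambda_j|/\lambda_2)^t$, which decays since $|\lambda_j|\le|\lambda_3|<\lambda_2$. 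Geometric $L^2$-decay gives $L^2$ convergence directly and, via Chebyshev and Borel--Cantelli, almost-sure convergence to zero, completing \eqref{eq_mu_converge}. I expect this step---the pairwise second-moment bookkeeping and the case analysis pinning the threshold to $m>\lambda_2^{-2}$---to be the main obstacle, since it is where the regime condition is used sharply and where one must be careful that the subdominant martingales need \emph{not} be $L^2$-bounded.

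Finally, for nondegeneracy I would avoid recomputing the limiting second moment and instead exploit the recursive (smoothing) structure of $W$. Splitting the tree at the root gives $W=(m\lambda_2)^{-1}\sum_{k=1}^{m}W_k$ where, conditionally on the children's states $Y_1,\dots,Y_m$ (i.i.d.\ from $\bP(i,\cdot)$), the $W_k$ are independent copies of $W$ started at $Y_k$ with $\EE[W_k\mid Y_k]=f_2(Y_k)$. The law of total variance then yields, for $V(i):=\mathsf{Var}(W^{(i)})$, the linear system $m\lambda_2^2\,V=\bP V+\br$, where $r(i)=(\bP f_2^2)(i)-\lambda_2^2 f_2(i)^2=\mathsf{Var}_{Y\sim\bP(i,\cdot)}(f_2(Y))\ge0$. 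Because $m\lambda_2^2>1$ exceeds every eigenvalue of $\bP$, the operator $m\lambda_2^2\bI-\bP$ is invertible with nonnegative Neumann series $\sum_{r\ge0}(m\lambda_2^2)^{-(r+1)}\bP^r$, so $V=\sum_{r\ge0}(m\lambda_2^2)^{-(r+1)}\bP^r\br\ge0$. Since $\bbf_2$ is a nonconstant eigenfunction, $\br$ is nonnegative and not identically zero, and irreducibility of $\bP$ (implied by the simplicity of the top eigenvalue in \eqref{eq_eigenvalue}) makes $(\bP^r\br)(i)>0$ for some $r$ and every $i$; hence $V(i)>0$ for all $i$, and therefore $\mathsf{Var}(X^{(i)})=\left(\frac{(m-1)\lambda_2}{m\lambda_2-1}\right)^2\langle\by,\bbf_2\rangle_{\bpi}^2\,V(i)>0$ whenever $\langle\by,\bbf_2\rangle_{\bpi}\neq0$.
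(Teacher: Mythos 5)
Your proposal is correct and follows the same skeleton as the paper's proof: expand $\by$ in the eigenbasis of $\bP$, observe that the generation sums $S_s^{(j)}=\langle \bZ_s^{(i)},\bbf_j\rangle$ normalized by $(m\lambda_j)^{-s}$ are martingales, establish $L^2$-boundedness of the $j=2$ mode from a second-moment formula (your MRCA pairwise bookkeeping reproduces exactly the paper's multitype Galton--Watson covariance recursion, equation \eqref{eq_innerproduct}), kill the $j\geq 3$ modes by the same three-case analysis on $m\lambda_j^2$, and pass to the cumulative average via the geometric Toeplitz weights with ratio $(m\lambda_2)^{-1}$. The one place you genuinely diverge is the nondegeneracy step. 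The paper lower-bounds $\lim_t (m\lambda_2)^{-2t}\mathsf{Var}(\langle\bZ_t^{(i)},\bbf_2\rangle)$ termwise by $c\sum_l (m\lambda_2^2)^{-l}$ with $c=\min_k \bbf_2^\top\bV_k\bbf_2$, and must argue that \emph{every} $\bbf_2^\top\bV_k\bbf_2$ is strictly positive (which it justifies somewhat loosely from $\bbf_2$ being nonconstant). Your route---the root-splitting fixed-point equation $m\lambda_2^2 V=\bP V+\br$ from the law of total variance, solved by the nonnegative Neumann series $\sum_{r\ge0}(m\lambda_2^2)^{-(r+1)}\bP^r$---only needs the local variance $r(k)=\mathsf{Var}_{Y\sim P(k,\cdot)}(f_2(Y))$ to be positive at \emph{some} state, with irreducibility (guaranteed by the simplicity of $\lambda_1$ in \eqref{eq_eigenvalue}) propagating positivity to every $i$. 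That is a weaker hypothesis and arguably a cleaner argument; the only point worth tightening is your assertion that $\br\not\equiv 0$, which deserves a line (e.g., if $\br\equiv 0$ then $f_2(X_2)=\lambda_2^2 f_2(X_0)$ surely while $P^2_{ii}>0$ for an undirected graph, forcing $f_2\equiv 0$, contradicting $\|\bbf_2\|_{\bpi}=1$).
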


Note that the result is based on the technicial condition that $\T$ is an $m$-tree. The simulations in Section \ref{sec_sumulation} suggest that the result still holds when $\T$ is a Galton-Watson tree. Condition \eqref{eq_eigenvalue} in Theorem \ref{thm_mut} can be weakened to
\begin{equation*}
1=\lambda_1>\lambda_2=\cdots=\lambda_k>\abs{\lambda_{k+1}}\geq\cdots\geq\abs{\lambda_N}, 
\end{equation*}
but the statement of the conclusion becomes more involved.  
See Remark \ref{remark:generalize_eig} for a complete statement.


Using the above result, we can study how the bias and variance of the sample average decays, conditioned on the seed node. 

\begin{corollary}
	\label{cor_decay}
	Assume the conditions of Theorem \ref{thm_mut} hold. 
	\begin{enumerate}
		\item When $\langle \by,\bbf_2 \rangle_{\bpi}\neq 0$ and $f_2(i)\neq 0$,  the bias of $\hat{\mu}_t^{(i)}$ decays like
		\begin{equation}
		\left[\EE (\hat{\mu}_t^{(i)})-\EE_{\bpi} (y)\right]^2 = \Theta(\lambda_2^{2t}).
		\end{equation}
		\item When $\langle \by,\bbf_2 \rangle_{\bpi}\neq 0$, the variance of $\hat{\mu}_t^{(i)}$ decays like
		\begin{equation}
		\mathsf{Var} (\hat{\mu}_t^{(i)}) = \Theta(\lambda_2^{2t}).
		\end{equation}
	\end{enumerate}
\end{corollary}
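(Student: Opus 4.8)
The plan is to read off both statements directly from the convergence established in Theorem \ref{thm_mut}, the only real work being to transfer $L^2$ convergence to convergence of the first and second moments. I would set
$$
Z_t := \lambda_2^{-t}\left[\hat{\mu}_t^{(i)}-\EE_{\bpi}(y)\right],
$$
so that Theorem \ref{thm_mut} gives $Z_t \to X^{(i)}$ in $L^2$. Since $L^2$ convergence implies $L^1$ convergence (by Cauchy--Schwarz), we get $\EE Z_t \to \EE X^{(i)}$; and since $\big|\,\|Z_t\|_2 - \|X^{(i)}\|_2\,\big| \le \|Z_t - X^{(i)}\|_2 \to 0$, we also get $\EE Z_t^2 \to \EE (X^{(i)})^2$. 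Combining the two yields $\mathsf{Var}(Z_t) \to \mathsf{Var}(X^{(i)})$. These three limits are the engine of the whole argument.

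For the bias claim I would write $\EE \hat{\mu}_t^{(i)} - \EE_{\bpi}(y) = \lambda_2^{t}\,\EE Z_t$, so that
$$
\left[\EE \hat{\mu}_t^{(i)} - \EE_{\bpi}(y)\right]^2 = \lambda_2^{2t}\,(\EE Z_t)^2 ,
$$
and it then suffices to check that $(\EE Z_t)^2 \to (\EE X^{(i)})^2$ is a strictly positive constant. Reading off \eqref{eq_different_expectation}, the prefactor $\tfrac{(m-1)\lambda_2}{m\lambda_2-1}$ is nonzero because $m>\lambda_2^{-2}>1$ forces $m-1>0$, while $0<\lambda_2<1$ gives $m\lambda_2>\lambda_2^{-1}>1$ and hence $m\lambda_2-1>0$; the remaining factors $\langle \by,\bbf_2\rangle_{\bpi}$ and $f_2(i)$ are nonzero exactly by the two hypotheses of part~1. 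Thus $(\EE Z_t)^2$ converges to a positive constant, so it is bounded between two positive constants for all large $t$, giving $\Theta(\lambda_2^{2t})$.

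For the variance claim I would use $\mathsf{Var}(\hat{\mu}_t^{(i)}) = \lambda_2^{2t}\,\mathsf{Var}(Z_t)$ together with $\mathsf{Var}(Z_t) \to \mathsf{Var}(X^{(i)})$, where Theorem \ref{thm_mut} guarantees $\mathsf{Var}(X^{(i)})>0$ under the single hypothesis $\langle \by,\bbf_2\rangle_{\bpi}\neq 0$ (note $f_2(i)$ may now vanish, which is why part~2 drops that condition). A convergent sequence with strictly positive limit is eventually trapped between two positive constants, so $\mathsf{Var}(Z_t)=\Theta(1)$ and therefore $\mathsf{Var}(\hat{\mu}_t^{(i)}) = \Theta(\lambda_2^{2t})$. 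The only point requiring any care is the passage from $L^2$ convergence to convergence of the first and second moments; there is no genuine obstacle, since Theorem \ref{thm_mut} has already done the substantive work of producing the limit $X^{(i)}$ and establishing the strict positivity of its variance.
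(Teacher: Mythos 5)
Your proposal is correct and follows essentially the same route as the paper: pass from the $L^2$ convergence of $\lambda_2^{-t}[\hat{\mu}_t^{(i)}-\EE_{\bpi}(y)]$ to convergence of its mean and variance, then invoke \eqref{eq_different_expectation} and the positivity of $\mathsf{Var}(X^{(i)})$ from Theorem \ref{thm_mut}. Your explicit verification that the prefactor $\tfrac{(m-1)\lambda_2}{m\lambda_2-1}$ is nonzero is a small detail the paper leaves implicit, but otherwise the arguments coincide.
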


When $X_0$ is initialized from $\bpi$, $\hat{\mu}_t^{(\bpi)}$ is an unbiased estimator of $\mu_{\mathsf{true}}$. By \eqref{eq_different_expectation}, for $i,j$ such that $f_2(i)\neq f_2(j)$, the limit distributions of $\lambda_2^{-t}\hat{\mu}_t^{(i)}$ and $\lambda_2^{-t}\hat{\mu}_t^{(j)}$ are different because $X^{(i)}$ and $X^{(j)}$ have different expectations. Thus the limit distribution of $\lambda_2^{-t}\hat{\mu}_t^{(\bpi)}$ is a non-trivial mixture.  The motivating example in the introduction illustrates this mixture. It is further explored with the simulation in Section \ref{sec_sumulation}.


Theorem \ref{thm_mut} studies the limit distribution of the sample average. Using the transformation discussed in Section \ref{sec_ipw}, the result also applies to the IPW estimator. Denote the VH estimator up to generation $t$ as $\hat{\mu}_{\mathsf{VH},t}$. The following corollary extends the result to the VH estimator.

\begin{corollary}
	\label{cor_VH}
	Under the conditions of Theorem \ref{thm_mut}, there exists a random variable $\tilde{X}^{(i)}\in L^2$ such that $$\lambda_2^{-t}\left[\hat{\mu}^{(i)}_{\mathsf{VH},t}-\mu_{\mathsf{true}}\right]\to \tilde{X}^{(i)}$$ 
	almost surely, and
	\begin{equation*}
	\EE \tilde{X}^{(i)}=\EE_{\bpi}(y^\prime)^{-1}\frac{(m-1)\lambda_2}{m\lambda_2-1} 
	\,\langle \by^{\prime\prime},\bbf_2 \rangle_{\bpi}\, 
	f_2(i),
	\end{equation*}
	where $y'(j)=\mathrm{deg}(j)^{-1}$ and $y''(j)=y(j)/\mathrm{deg}(j)$. Moreover, if $\langle \by^{\prime\prime},\bbf_2 \rangle_{\bpi} \neq 0$, then $\mathsf{Var}(\tilde{X}^{(i)})>0$ for any $i=1,\ldots,N$.
\end{corollary}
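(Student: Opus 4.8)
The starting point is to recognize the VH estimator as a ratio of two ordinary sample averages. With $y'(j)=\mathrm{deg}(j)^{-1}$ and $y''(j)=y(j)/\mathrm{deg}(j)$ as in the statement, the harmonic-mean normalization is $H^{-1}=\frac{1}{n}\sum_{\sigma\in\T}y'(X_\sigma)$, so that
\begin{equation*}
\hat{\mu}^{(i)}_{\mathsf{VH},t}=\frac{\frac{1}{n}\sum_{\sigma\in\T}y''(X_\sigma^{(i)})}{\frac{1}{n}\sum_{\sigma\in\T}y'(X_\sigma^{(i)})}.
\end{equation*}
Both numerator and denominator are sample averages to which Theorem \ref{thm_mut} applies directly, with node features $\by''$ and $\by'$ respectively. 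A short computation using $\pi(j)=\mathrm{deg}(j)/\mathrm{vol}(G)$ gives $\EE_{\bpi}(y'')=\mathrm{vol}(G)^{-1}\sum_j y(j)$ and $\EE_{\bpi}(y')=N/\mathrm{vol}(G)$, hence $\EE_{\bpi}(y'')/\EE_{\bpi}(y')=N^{-1}\sum_j y(j)=\mu_{\mathsf{true}}$. This identifies $\mu_{\mathsf{true}}$ as the ratio of the two stationary means, which is exactly what makes the VH estimator asymptotically target $\mu_{\mathsf{true}}$.

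The plan is then a delta-method/Slutsky argument at the scale $\lambda_2^{-t}$. Let $X'^{(i)},X''^{(i)}\in L^2$ be the almost-sure limits supplied by Theorem \ref{thm_mut} for $\by'$ and $\by''$, write $a_t,b_t$ for the numerator and denominator, and set $a=\EE_{\bpi}(y'')$, $b=\EE_{\bpi}(y')$. Multiplying the identity
\begin{equation*}
\hat{\mu}^{(i)}_{\mathsf{VH},t}-\mu_{\mathsf{true}}=\frac{b(a_t-a)-a(b_t-b)}{b\,b_t}
\end{equation*}
by $\lambda_2^{-t}$ lets me pass to the limit termwise: $\lambda_2^{-t}(a_t-a)\to X''^{(i)}$ and $\lambda_2^{-t}(b_t-b)\to X'^{(i)}$ almost surely, while $b_t\to b$. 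A clean point is that the denominator is deterministically bounded away from zero: since $G$ is finite and connected, $y'(j)=1/\mathrm{deg}(j)\in[1/\mathrm{deg}_{\max},1/\mathrm{deg}_{\min}]$, so $b_t$ lies in the same interval for every $t$ and no truncation is needed. This yields almost-sure convergence to
\begin{equation*}
\tilde{X}^{(i)}=\frac{X''^{(i)}}{\EE_{\bpi}(y')}-\frac{\EE_{\bpi}(y'')}{\EE_{\bpi}(y')^2}\,X'^{(i)},
\end{equation*}
with membership in $L^2$ immediate since $X'^{(i)},X''^{(i)}\in L^2$ and the coefficients are constants.

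It then remains to compute $\EE\tilde{X}^{(i)}$ and to verify non-degeneracy. Taking expectations and inserting $\EE X^{(i)}=\frac{(m-1)\lambda_2}{m\lambda_2-1}\langle\by,\bbf_2\rangle_{\bpi}f_2(i)$ from Theorem \ref{thm_mut} for each of $\by',\by''$ gives
\begin{equation*}
\EE\tilde{X}^{(i)}=\EE_{\bpi}(y')^{-1}\,\frac{(m-1)\lambda_2}{m\lambda_2-1}\,f_2(i)\Big[\langle\by'',\bbf_2\rangle_{\bpi}-\mu_{\mathsf{true}}\,\langle\by',\bbf_2\rangle_{\bpi}\Big].
\end{equation*}
The first term is exactly the claimed expression, so the remaining bookkeeping is to account for the denominator's contribution $\mu_{\mathsf{true}}\langle\by',\bbf_2\rangle_{\bpi}=\mu_{\mathsf{true}}\,\mathrm{vol}(G)^{-1}\sum_j f_2(j)$; this vanishes precisely when the degree sequence is flat along $\bbf_2$ (e.g.\ in the constant-degree Blockmodel of the motivating example, where $\bbf_2\perp\bm{1}$ forces $\sum_j f_2(j)=0$), recovering the stated mean. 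For the variance I would use the fact—implicit in the proof of Theorem \ref{thm_mut} and forced by the spectral gap \eqref{eq_eigenvalue}—that the limit map $\by\mapsto X^{(i)}$ is linear and depends on $\by$ only through $\langle\by,\bbf_2\rangle_{\bpi}$, so $X'^{(i)}=\langle\by',\bbf_2\rangle_{\bpi}W^{(i)}$ and $X''^{(i)}=\langle\by'',\bbf_2\rangle_{\bpi}W^{(i)}$ for a single seed-dependent variable $W^{(i)}$. Then $\tilde{X}^{(i)}$ is a scalar multiple of $W^{(i)}$, and $\mathsf{Var}(X''^{(i)})>0$ (from Theorem \ref{thm_mut}, since $\langle\by'',\bbf_2\rangle_{\bpi}\neq0$) forces $\mathsf{Var}(W^{(i)})>0$ and hence $\mathsf{Var}(\tilde{X}^{(i)})>0$.

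The main obstacle is not the Slutsky step itself—which is clean thanks to the deterministic two-sided bound on $b_t$—but the two pieces of bookkeeping that tie the ratio back to the single $\lambda_2$-eigencomponent: correctly tracking the denominator's cross-term in the mean (and checking it matches the stated constant) and extracting the common factor $W^{(i)}$ so that non-degeneracy transfers from the numerator. Everything else is a routine application of Theorem \ref{thm_mut} to the two auxiliary node features $\by'$ and $\by''$.
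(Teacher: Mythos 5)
Your decomposition of the VH estimator as a ratio of the two sample averages of $\by''$ and $\by'$, followed by an application of Theorem \ref{thm_mut} to each feature and a Slutsky-type passage to the limit, is exactly the route the paper takes. The substantive difference is that you carry the first-order fluctuation of the denominator $H_t^{-1}=\hat{\mu}_t'$ through the delta method, producing the cross term $-\mu_{\mathsf{true}}\langle\by',\bbf_2\rangle_{\bpi}$ in $\EE\tilde X^{(i)}$, whereas the paper's proof treats $H_t^{-1}\to\EE_{\bpi}(y')$ as if the denominator were asymptotically constant at scale $\lambda_2^t$ and identifies the limit with $\EE_{\bpi}(y')^{-1}\bar X^{(i)}$, where $\bar X^{(i)}$ comes from the numerator alone. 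Your bookkeeping is the more careful one: since $\langle\by',\bbf_2\rangle_{\bpi}=\mathrm{vol}(G)^{-1}\sum_j f_2(j)$ and $\bbf_2$ is orthogonal to $\bm{1}$ only in the $\bpi$-weighted inner product, this quantity need not vanish, so the denominator contributes at the same $\lambda_2^t$ order as the numerator unless $\sum_j f_2(j)=0$ (e.g.\ a regular graph). Your argument thus establishes the almost-sure convergence and the $L^2$ membership exactly as the paper does, but yields $\EE\tilde X^{(i)}=\EE_{\bpi}(y')^{-1}\frac{(m-1)\lambda_2}{m\lambda_2-1}\bigl[\langle\by'',\bbf_2\rangle_{\bpi}-\mu_{\mathsf{true}}\langle\by',\bbf_2\rangle_{\bpi}\bigr]f_2(i)$, which matches the stated formula only under that extra condition; you are right to flag this, as the paper's own proof never addresses the denominator's contribution.

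One genuine gap remains in your final step. Having (correctly) written $X'^{(i)}=\langle\by',\bbf_2\rangle_{\bpi}W^{(i)}$ and $X''^{(i)}=\langle\by'',\bbf_2\rangle_{\bpi}W^{(i)}$ for a common factor $W^{(i)}$ proportional to $Y_2^{(i)}$, your limit is $\tilde X^{(i)}=\EE_{\bpi}(y')^{-1}\bigl[\langle\by'',\bbf_2\rangle_{\bpi}-\mu_{\mathsf{true}}\langle\by',\bbf_2\rangle_{\bpi}\bigr]W^{(i)}$, so $\mathsf{Var}(\tilde X^{(i)})>0$ requires the bracketed scalar to be nonzero. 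The hypothesis $\langle\by'',\bbf_2\rangle_{\bpi}\neq0$ does not exclude an exact cancellation with $\mu_{\mathsf{true}}\langle\by',\bbf_2\rangle_{\bpi}$, so your closing inference that $\mathsf{Var}(X''^{(i)})>0$ forces $\mathsf{Var}(\tilde X^{(i)})>0$ does not follow; the correct sufficient condition for non-degeneracy under your (corrected) limit is $\langle\by''-\mu_{\mathsf{true}}\by',\bbf_2\rangle_{\bpi}\neq0$. Under the paper's cross-term-free limit the stated hypothesis does suffice, which is presumably why the corollary is phrased as it is, but you should state the adjusted hypothesis explicitly rather than eliding the coefficient.
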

Similarly, when $X_0$ is initialized from $\bpi$, the limit distirbution of $\hat{\mu}_{\mathsf{VH},t}^{(\bpi)}$ is a non-trivial mixture of the limit distributions of $\hat{\mu}_{\mathsf{VH},t}^{(i)}$ for all $i\in G$.

\subsection{Results for the GLS estimator}
\label{subsec_gls}
For the GLS estimator, the two right panels of Figure \ref{fig_distributions_mean_gls} suggest that the estimator is not sensitive to the initial distribuiton of $X_0$. This section shows that the GLS estimator is asymptotically normal with parameters that do not depend on the initial distribution of $X_0$.

Given the referral tree $\T$, define the covariance matrix $\bSigma\in\R^{n\times n}$ as
\begin{equation*}
\bSigma_{\sigma,\tau}=\mathsf{Cov} (y(X_\sigma),y(X_\tau))
\end{equation*} 
for any $\sigma, \tau\in\T$, where $n$ is the number of nodes in $\T$. According to \cite{roch2018generalized}, $\bw^\ast$ in \eqref{eq:glsdef} is given by  
\begin{equation}
\label{eq_gls_weight}
\bw^\ast=(\bx^\top \textbf{1})^{-1} \bx^\top, \quad \text{where} \quad \bSigma \bx=\bm{1}.
\end{equation}
Here $\bx$ is the vectorization of the RDS sample $\{X_\sigma^{(\cdot)}: \sigma\in\T\}$.
For the Blockmodel with $2$ blocks, the GLS estimator admits a closed-form expression:
\begin{equation}
\label{eq_gls_expression}
\hat{\mu}_{\mathsf{GLS}}=\sum_{\sigma\in\T} \frac{1-\lambda_2 (\mathrm{deg}(\sigma)-1)}{n(1-\lambda_2(1-\frac{2}{n}))} y(X_\sigma),
\end{equation}
where $\lambda_2$ is the second eigenvalue of the Markov transition matrix between blocks and $\mathrm{deg}(\sigma)$ is the degree of $\sigma \in \T$. 

Let $\hat{\mu}_{\mathsf{GLS},t}$ be the GLS estimator of the RDS samples up to generation $t$. Based on \eqref{eq_gls_expression}, the following theorem establishes the asymptotic normality of the GLS estimator. 

\begin{theorem}
	\label{thm_gls}
	Consider the Blockmodel with $2$ blocks on
	an $m$-tree $\T$. Assume $\abs{\lambda_2}<1$. Then, for any initial distribution $\bnu$ of $X_0$,
	\begin{equation}
	\sqrt{n_t}\left[\hat{\mu}_{\mathsf{GLS},t}^{(\bnu)}-\EE_{\bpi} (y)\right] \to \cN\left(0,\frac{1+\lambda_2}{1-\lambda_2}\,\mathsf{Var}_{\bpi}(y)\right).
	\end{equation}
	in distribution as $t \to \infty$, where $\mathsf{Var}_{\bpi}(y)=\EE_{\bpi}(y^2)-(\EE_{\bpi}(y))^2$ and $n_t=1+m+\cdots+m^t$ is the number of RDS samples up to generation $t$. 
\end{theorem}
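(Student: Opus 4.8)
The plan is to combine the closed-form weights in \eqref{eq_gls_expression} with the two-state structure of the block process, rewrite the GLS estimator as a normalized sum of martingale differences, and apply a martingale central limit theorem. Because the Blockmodel has two blocks, Lemma \ref{lem_eigendecomposition} gives $\by=\EE_\bpi(y)\bm{1}+\langle\by,\bbf_2\rangle_\bpi\,\bbf_2$, so the centered feature $\tilde y(i):=y(i)-\EE_\bpi(y)=\langle\by,\bbf_2\rangle_\bpi f_2(i)$ inherits the eigen-relation $\EE[\tilde y(X_\sigma)\mid X_{p(\sigma)}]=\lambda_2\,\tilde y(X_{p(\sigma)})$. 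I would define innovations $\xi_\sigma:=\tilde y(X_\sigma)-\lambda_2\tilde y(X_{p(\sigma)})$ for $\sigma\neq 0$ and $\xi_0:=\tilde y(X_0)$. Ordering the vertices of $\T$ by generation and letting $\mathcal{F}_{\sigma^-}$ be generated by the previously revealed vertices, the tree Markov property of the $(\T,\bP)$-walk yields $\EE[\xi_\sigma\mid\mathcal{F}_{\sigma^-}]=0$ for every non-root $\sigma$; hence $\{\xi_\sigma\}_{\sigma\neq 0}$ are martingale differences, in particular pairwise uncorrelated and uncorrelated with $\xi_0$.

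The crucial algebraic step re-expresses the centered estimator in these innovations. Unrolling the recursion gives $\tilde y(X_\sigma)=\sum_{k=0}^{|\sigma|}\lambda_2^{k}\,\xi_{a_k(\sigma)}$, where $a_k(\sigma)$ is the ancestor of $\sigma$ at distance $k$. Substituting this and the weights of \eqref{eq_gls_expression} into $\hat\mu_{\mathsf{GLS},t}-\EE_\bpi(y)=\sum_\sigma w_\sigma\tilde y(X_\sigma)$ and interchanging summation yields $\hat\mu_{\mathsf{GLS},t}-\EE_\bpi(y)=\sum_\tau W_\tau\xi_\tau$ with $W_\tau:=\sum_{\sigma\succeq\tau}w_\sigma\lambda_2^{|\sigma|-|\tau|}$. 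Since $\T$ is an $m$-tree, the subtree below a node at generation $s$ is an $m$-tree of depth $t-s$, and the three weight values (from $\mathrm{deg}(\sigma)\in\{m+1,1,m\}$ for interior, leaf, root) make $W_\tau$ collapse through the identity $(1-m\lambda_2)\sum_{j=0}^{k-1}(m\lambda_2)^{j}+(m\lambda_2)^{k}=1$. The computation gives $W_\tau=1/c_n$ for every non-root $\tau$, where $c_n:=n_t(1-\lambda_2)+2\lambda_2$ is the denominator in \eqref{eq_gls_expression}; for the root the smaller degree $\mathrm{deg}(0)=m$ contributes an extra $\lambda_2/c_n$, so $W_0=(1+\lambda_2)/c_n$. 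This telescoping, which is exactly what renders GLS insensitive to the seed, gives $\hat\mu_{\mathsf{GLS},t}-\EE_\bpi(y)=c_n^{-1}\big(\lambda_2\xi_0+\sum_\tau\xi_\tau\big)$.

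From here the result follows in two moves. Writing $g(i):=\EE[\tilde y(X_\tau)^2\mid X_{p(\tau)}=i]-\lambda_2^2\tilde y(i)^2$, one has $\EE[\xi_\tau^2\mid X_{p(\tau)}]=g(X_{p(\tau)})$ and $\EE_\bpi(g)=(1-\lambda_2^2)\mathsf{Var}_\bpi(y)$; together with the pairwise uncorrelatedness this produces an exact variance scaling like $n_t^{-1}\tfrac{1+\lambda_2}{1-\lambda_2}\mathsf{Var}_\bpi(y)$, confirming the $\sqrt{n_t}$ normalization. For the limiting law I would apply the martingale central limit theorem to the array $\{\xi_\tau/\sqrt{n_t}\}$. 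The conditional Lindeberg condition is immediate since $\tilde y$, hence every $\xi_\tau$, is bounded. The substantive hypothesis is convergence of the summed conditional variances,
\begin{equation*}
\frac{1}{n_t}\sum_{\tau\neq 0}\EE[\xi_\tau^2\mid\mathcal{F}_{\tau^-}]=\frac{m}{n_t}\sum_{|\rho|\le t-1}g(X_\rho)\;\xrightarrow{P}\;\EE_\bpi(g),
\end{equation*}
which reduces to a law of large numbers for the RDS sample average of the bounded function $g$. Since $n_t/c_n\to(1-\lambda_2)^{-1}$ and the seed-dependent term $c_n^{-1}\lambda_2\xi_0=O_P(n_t^{-1})$ vanishes after scaling, Slutsky's theorem then gives, for every initial distribution $\bnu$, the limit $\cN\big(0,(1-\lambda_2)^{-2}\EE_\bpi(g)\big)=\cN\big(0,\tfrac{1+\lambda_2}{1-\lambda_2}\mathsf{Var}_\bpi(y)\big)$.

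The main obstacle is the law of large numbers in the display above, since it must hold in \emph{every} regime permitted by $|\lambda_2|<1$, including the high-variance regime $m>\lambda_2^{-2}$ where the tree-indexed empirical averages fluctuate the most. I would establish it by a second-moment argument: the mean of $n_t^{-1}\sum_{|\rho|\le t-1}g(X_\rho)$ converges to $\EE_\bpi(g)$ for any $\bnu$ because $\bnu\mathcal{P}^{s}\to\bpi$ and the geometrically growing generations are dominated by the deepest ones, while its variance is exactly the sample-average variance of the bounded feature $g$, which tends to $0$ whenever $|\lambda_2|<1$ (at most $O(\lambda_2^{2t})$ by Corollary \ref{cor_decay} applied to $g$ in the high-variance case, and of smaller order otherwise). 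Chebyshev's inequality then delivers the required convergence in probability, and because $\xi_0$ is the only place where the seed enters, the limit is identical for all $\bnu$.
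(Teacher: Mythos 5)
Your proposal is correct and follows essentially the same route as the paper: your innovations $\xi_\sigma=\tilde y(X_\sigma)-\lambda_2\tilde y(X_{p(\sigma)})$ are exactly the increments of the martingale $M_n$ in \eqref{eq:Mn-martingale}, your conditional-variance computation matches the paper's verification that $V_n\to 1-\lambda_2^2$ via a law of large numbers for a bounded transformed feature (Lemma \ref{lem:trivial'}), and the conclusion follows from the same martingale CLT plus Slutsky. The one genuinely useful addition is your explicit telescoping of the GLS weights showing $W_\tau=1/c_n$ for all non-root $\tau$, which supplies the algebra the paper leaves implicit in \eqref{eq:mtg-gls-relationship}; conversely, your appeal to Corollary \ref{cor_decay} for the variance decay is slightly loose (it is stated under the hypotheses of Theorem \ref{thm_mut}, including $\lambda_2>0$ and $m>\lambda_2^{-2}$), whereas the paper proves the needed $L^2$ law of large numbers directly in Claim \ref{lem:trivial}.
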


Theorem \ref{thm_gls} shows that the GLS estimator is asymptotically normal \emph{both in the low variance and high variance regimes}. Note that the result is based on \eqref{eq_gls_expression} and the technical condition that $\T$ is an $m$-tree. The simulations in Section \ref{sec_sumulation} suggest that the asymptotic normality of the GLS estimator still holds when $\T$ is a Galton-Watson tree, or the model is no longer a Blockmodel with $2$ blocks.

Theorem \ref{thm_gls} studies the limit distribution of the GLS estimator. Using the transformation discussed in Section \ref{sec_ipw}, the result also applies to the GLS estimator with IPW adjustment. Denote the GLS estimator with VH adjustment of RDS samples up to generation $t$ as $\hat{\mu}^{(\cdot)}_{\mathsf{GLS,VH},t}$. The following corollary extends the result to the GLS estimator with VH adjustment.

\begin{corollary}
	\label{cor_glsVH}
	Under the conditions in Theorem \ref{thm_gls}, for any initial distribution $\bnu$ of $X_0$,
	$$\sqrt{n_t}\left[\hat{\mu}_{\mathsf{GLS,VH},t}^{(\bnu)}-\mu_{\mathsf{true}}\right] \xrightarrow{d} \cN\left(0,\frac{1+\lambda_2}{1-\lambda_2}\,\EE_{\bpi}(y')^{-2}
	\,\mathsf{Var}_{\bpi}(y^{\prime\prime})\right).$$
	where $y'(i)=\mathrm{deg}(i)^{-1}$ and $y''(i)=y(i)/\mathrm{deg}(i)$.
\end{corollary}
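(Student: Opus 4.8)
The plan is to express the GLS estimator with VH adjustment as a ratio and reduce the claim to Theorem~\ref{thm_gls} through the delta method. Writing $y'(i)=\mathrm{deg}(i)^{-1}$ and $y''(i)=y(i)/\mathrm{deg}(i)$, the VH adjustment replaces the unknown constant $\mathrm{vol}(G)/N=\EE_{\bpi}(y')^{-1}$ by $1/\hat\mu_{\mathsf{GLS},t}(y')$, so that
\begin{equation*}
\hat\mu_{\mathsf{GLS,VH},t}^{(\bnu)}=\frac{\hat\mu_{\mathsf{GLS},t}^{(\bnu)}(y'')}{\hat\mu_{\mathsf{GLS},t}^{(\bnu)}(y')},\qquad \hat\mu_{\mathsf{GLS},t}(f)=\sum_{\sigma}w^\ast_\sigma f(X_\sigma).
\end{equation*}
A convenient feature of the Blockmodel with $2$ blocks is that the weights $w^\ast_\sigma$ in \eqref{eq_gls_expression} are identical for every non-constant node function: the covariance $\bSigma_{\sigma\tau}$ equals $\langle f,\bbf_2\rangle_{\bpi}^2$ times a matrix that does not depend on $f$, so the minimizer in \eqref{eq:glsdef} is scale-invariant. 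I would first record the identity $\EE_{\bpi}(y'')/\EE_{\bpi}(y')=\mu_{\mathsf{true}}$, which supplies the correct centering.

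Next I would invoke Theorem~\ref{thm_gls} twice. Applied to $y'$ it yields $\hat\mu_{\mathsf{GLS},t}(y')\to\EE_{\bpi}(y')>0$ in probability, so for large $t$ the denominator is bounded away from zero and the ratio is well defined; applied to $y''$ it gives the marginal central limit theorem with variance $\tfrac{1+\lambda_2}{1-\lambda_2}\mathsf{Var}_{\bpi}(y'')$. The essential observation is that, sharing the same weights, both estimators are affine images of a single scalar,
\begin{equation*}
\hat\mu_{\mathsf{GLS},t}(f)=\EE_{\bpi}(f)+\langle f,\bbf_2\rangle_{\bpi}\,S_t,\qquad S_t:=\sum_{\sigma}w^\ast_\sigma\,\bbf_2(B_\sigma),
\end{equation*}
because $\sum_\sigma w^\ast_\sigma=1$ and any block function decomposes as $f=\EE_{\bpi}(f)\,\bm{1}+\langle f,\bbf_2\rangle_{\bpi}\bbf_2$. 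Theorem~\ref{thm_gls} (using $\mathsf{Var}_{\bpi}(f)=\langle f,\bbf_2\rangle_{\bpi}^2$ in the $2$-block case) therefore amounts to the scalar statement $\sqrt{n_t}\,S_t\to\cN(0,\tfrac{1+\lambda_2}{1-\lambda_2})$, and numerator and denominator are \emph{jointly} asymptotically normal with a rank-one limit.

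I would then linearize the ratio. Using $\EE_{\bpi}(y'')-\mu_{\mathsf{true}}\EE_{\bpi}(y')=0$, a first-order expansion gives
\begin{equation*}
\hat\mu_{\mathsf{GLS,VH},t}^{(\bnu)}-\mu_{\mathsf{true}}=\EE_{\bpi}(y')^{-1}\,\hat\mu_{\mathsf{GLS},t}\big(y''-\mu_{\mathsf{true}}y'\big)+o_P\!\big(n_t^{-1/2}\big),
\end{equation*}
so applying Theorem~\ref{thm_gls} to the single residual function $y''-\mu_{\mathsf{true}}y'$ produces asymptotic normality with variance $\EE_{\bpi}(y')^{-2}\tfrac{1+\lambda_2}{1-\lambda_2}\mathsf{Var}_{\bpi}(y''-\mu_{\mathsf{true}}y')$. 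Seed-independence is inherited for free: the limit in Theorem~\ref{thm_gls} does not depend on the initial distribution $\bnu$, hence neither does the limit of the ratio. The displayed variance is then recovered from $\mathsf{Var}_{\bpi}(y''-\mu_{\mathsf{true}}y')=\mathsf{Var}_{\bpi}(y'')$, which holds when $\langle y',\bbf_2\rangle_{\bpi}=0$, i.e.\ when the degree is constant across blocks (as in the symmetric motivating example).

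The step I expect to be the main obstacle is the treatment of the random denominator. A naive Slutsky argument replacing $\hat\mu_{\mathsf{GLS},t}(y')$ by its probability limit is not legitimate, since the denominator fluctuates at the same $n_t^{-1/2}$ scale as the numerator and is perfectly correlated with it through $S_t$; ignoring this fluctuation changes the limiting variance. The resolution is exactly the rank-one degeneracy noted above --- both estimators are deterministic affine functions of the same Gaussian limit of $\sqrt{n_t}\,S_t$ --- which collapses the two-dimensional delta method to a scalar computation and makes the linearization rigorous without any independent control of the denominator. The only other point requiring care is justifying that \eqref{eq_gls_expression} furnishes literally the same weight vector for $y'$ and $y''$, which is the scale-invariance of \eqref{eq:glsdef} in the $2$-block model.
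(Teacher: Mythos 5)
Your proposal is correct, and it takes a genuinely different --- and more careful --- route than the paper. The paper's own proof writes $\hat{\mu}_{\mathsf{GLS,VH},t}^{(\bnu)}=H_t\cdot\hat{\mu}''_{\mathsf{GLS},t}$, observes that $H_t^{-1}$ (the GLS estimate of $\EE_{\bpi}(y')$) converges to $\EE_{\bpi}(y')$ in probability, applies Theorem \ref{thm_gls} to $y''$ alone, and then invokes Slutsky's theorem to conclude. That is precisely the ``naive Slutsky argument'' you warn against: it controls $\sqrt{n_t}\,H_t\bigl[\hat{\mu}''_{\mathsf{GLS},t}-\EE_{\bpi}(y'')\bigr]$ but silently discards the term $\sqrt{n_t}\,\EE_{\bpi}(y'')\bigl[H_t-\EE_{\bpi}(y')^{-1}\bigr]$, which by Theorem \ref{thm_gls} applied to $y'$ is asymptotically a nondegenerate normal (perfectly correlated with the numerator) whenever $\mathsf{Var}_{\bpi}(y')>0$, i.e.\ whenever the two blocks have different degrees. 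Your rank-one reduction --- same weights for every block function by \eqref{eq_gls_expression}, both estimators affine in the single scalar $S_t$, hence an exact identity $\hat{\mu}_{\mathsf{GLS,VH},t}-\mu_{\mathsf{true}}=\hat{\mu}_{\mathsf{GLS},t}(y''-\mu_{\mathsf{true}}y')/\hat{\mu}_{\mathsf{GLS},t}(y')$ followed by a legitimate Slutsky step on the denominator only --- handles this correctly and yields the limit variance $\frac{1+\lambda_2}{1-\lambda_2}\EE_{\bpi}(y')^{-2}\mathsf{Var}_{\bpi}(y''-\mu_{\mathsf{true}}y')$. As you note, this coincides with the displayed $\frac{1+\lambda_2}{1-\lambda_2}\EE_{\bpi}(y')^{-2}\mathsf{Var}_{\bpi}(y'')$ only when $\langle \by',\bbf_2\rangle_{\bpi}=0$, in which case $y'$ is constant on $G$, $H_t$ is deterministic, and the paper's shortcut is harmless. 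So your approach buys a rigorous treatment of the random normalizer at the cost of an extra hypothesis (degree homogeneity across blocks) or a corrected variance formula; the paper's approach is shorter but its Slutsky step is not justified as written in the general two-block case. The one cosmetic point: your linearization needs no $o_P(n_t^{-1/2})$ remainder at all, since keeping $\hat{\mu}_{\mathsf{GLS},t}(y')$ in the denominator makes the identity exact and Slutsky then applies directly.
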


\section{Simulation studies}\label{sec_sumulation}
In this section, data are simulated from a
Blockmodel with $2$ or $3$ blocks.  As stated in Section \ref{section_rank_k}, a Blockmodel with $k$ blocks consists of a reversible transition matrix $\mathcal{P}\in\R^{k\times k}$ between blocks, block feature $\by\in\R^k$, and a referral tree $\T$. In this specification, the block feature $\by$ is assumed to be centralized, so that $\EE_{\bpi} (y)=0$. For a Blockmodel with $2$ blocks, let
\begin{equation*}
\mathcal{P}=\Big(\begin{matrix}
p & 1-p \\ 1-q & q
\end{matrix}\Big).
\end{equation*}
denote the transition matrix between 2 blocks.
The second eigenvalue of $\mathcal{P}$ is $\lambda_2=p+q-1$. 

In the simulation settings below, the block feature is given prior to centralization. In fact, all of the $2$-Blockmodels use $\by=(1,0)^\top$ and the $3$-Blockmodels use $\by=(0,1,2)^\top$ . All of the experiments are based on 5000 simulated datasets.

\subsection{Sample average} \label{sec:sim_average}
Here we consider the behavior of the sample average $\hat{\mu}_t$ in the high variance regime $m>\lambda_2^{-2}$. In this setting, the asymptotic distribution of $\lambda_2^{-t}\hat{\mu}_t^{(\bpi)}$ is no longer normal, unlike the low variance regime. Instead, its asymptotic distribution is a mixture of the distributions of $\lambda_2^{-t}\hat{\mu}_t^{(i)}$ for all $i\in G$.

The simulation is performed on two different Blockmodels with $2$ blocks. We consider a balanced model with $p=q=.95$  and an unbalanced model with   $p=0.95$ and $q=0.85$. For both models, $\T$ is a Galton-Watson tree with offspring distribution $1+\Rm{Binomial}(2,1/2)$.  Under these settings, $m>\lambda_2^{-2}$ for both models. Figure \ref{fig_sample mean for two rank-2 models} displays the results of the experiment with $t=50$. 


\begin{figure}[t]
	\centering
	\includegraphics[height=11cm]{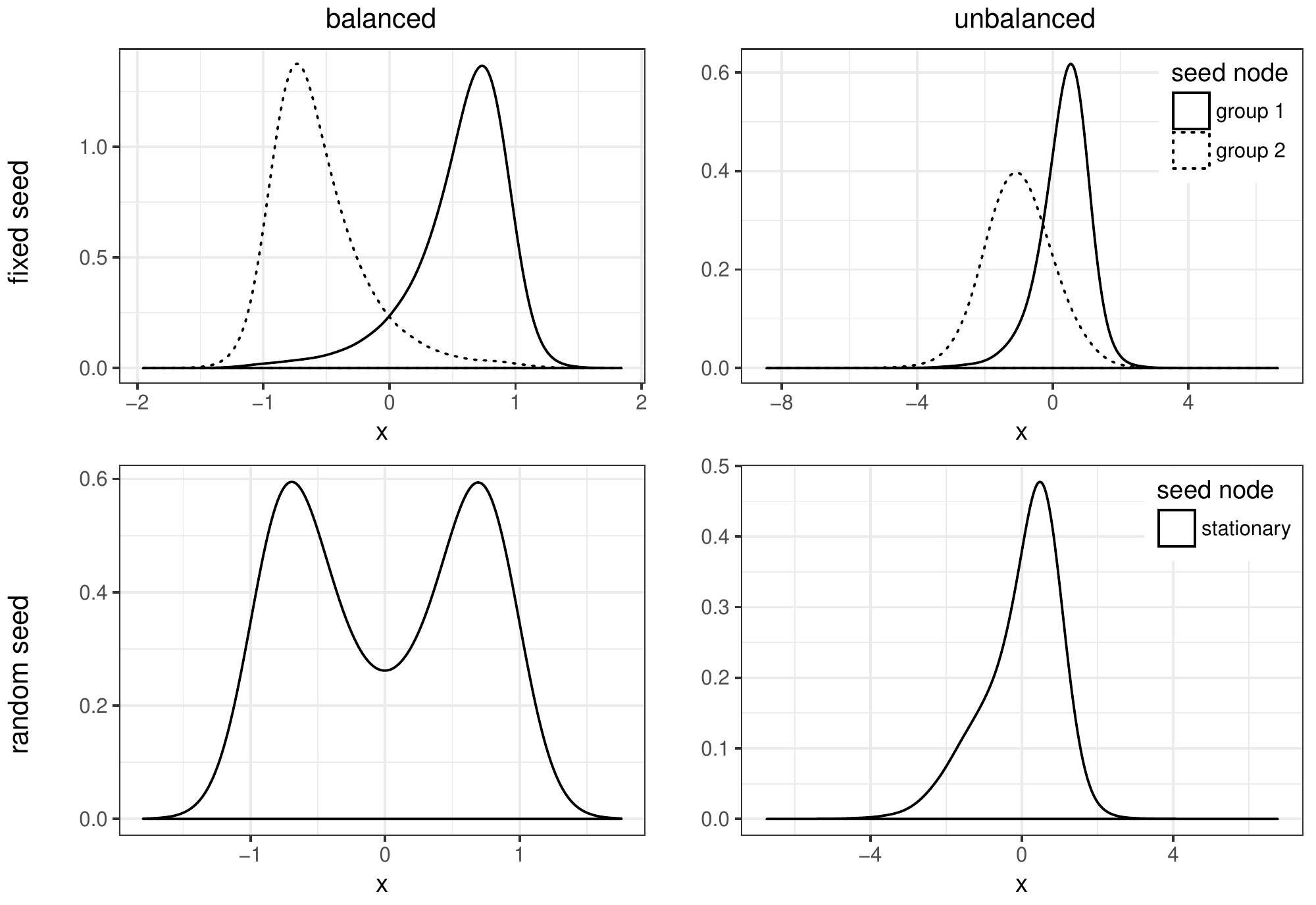}
	\caption{Kernel density estimates of $\lambda_2^{-t}\hat{\mu}_t$ for balanced (the left panels) and unbalanced (the right panels) Blockmodel with $2$ blocks over $5000$ replicates.  For each scenario, the top panel corresponds to the case when $X_0$ is initialized from group 1 (the solid curve) and group 2 (the dashed curve), the lower panel corresponds to the case when $X_0$ is initialized from the stationary distribution.}
	\label{fig_sample mean for two rank-2 models}
\end{figure}

\subsection{GLS estimator}\label{sec:sim_gls}
Here we consider the behavior of the GLS estimator in both the low and high variance regimes. The first experiment corroborates the result of Theorem \ref{thm_gls}, namely that the GLS estimator is asymptotically normal in both variance regimes. The simulation is performed on two different Blockmodels with $2$ blocks. In the first model $(p,q) = (0.95, 0.85)$; in the second model $(p,q) = (0.8, 0.7)$. For both models, $\T$ is a $2$-tree. Under these settings, $m>\lambda_2^{-2}$ for the first model and $m<\lambda_2^{-2}$ for the second model. The two quantile-quantile plots in Figure \ref{fig_gls_2tree} correspond to the two models. It appears that the distribution of the GLS estimator gets closer to the normal distribution as the sample size increases.

The second experiment suggests that the asymptotic normality of GLS estimator extends beyond the conditions in Theorem \ref{thm_gls}. We consider a two-block model with $(p,q) = (0.8, 0.7)$ and a three-block model, where the transition matrix between the blocks is
\begin{equation*}
\mathcal{P}=\left(\begin{matrix}
0.8 & 0.1 & 0.1 \\
0.2 & 0.6 & 0.2 \\
0.2 & 0.2 & 0.6 \\
\end{matrix}\right).
\end{equation*}
For both models,  $\T$ is a Galton-Watson tree with offspring distribution $1+\Rm{Binomial}(2,1/2)$. Results for this experiment are displayed in Figure \ref{fig_gls_extend}.

\begin{figure}[t]
	\centering
	\includegraphics[height=7cm]{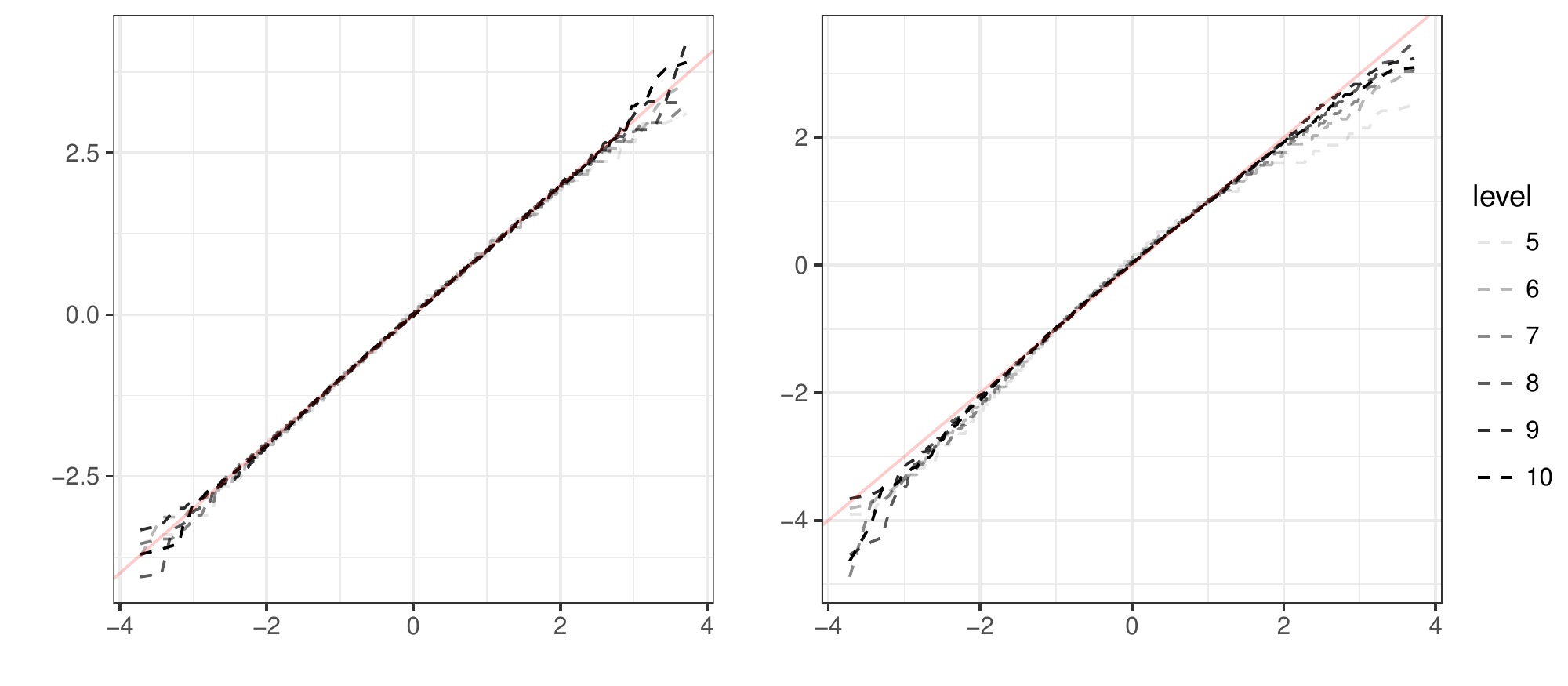}
	\caption{Q-Q plot of $\mu_{t,GLS}$ for the Blockmodels with $2$ blocks, with $m>\lambda_2^{-2}$ (left panel) and $m<\lambda_2^{-2}$ (right panel). $\T$ is a $2$-tree. For each scenario, the Q-Q plot is created over $5000$ replicates. The six dashed Q-Q lines with different colors correspond to $\T$ with 5, 6, 7, 8, 9 or 10 levels. The red solid line is $y=x$. .} 
	\label{fig_gls_2tree}
\end{figure}

\begin{figure}[t]
	\centering
	\includegraphics[height=7cm]{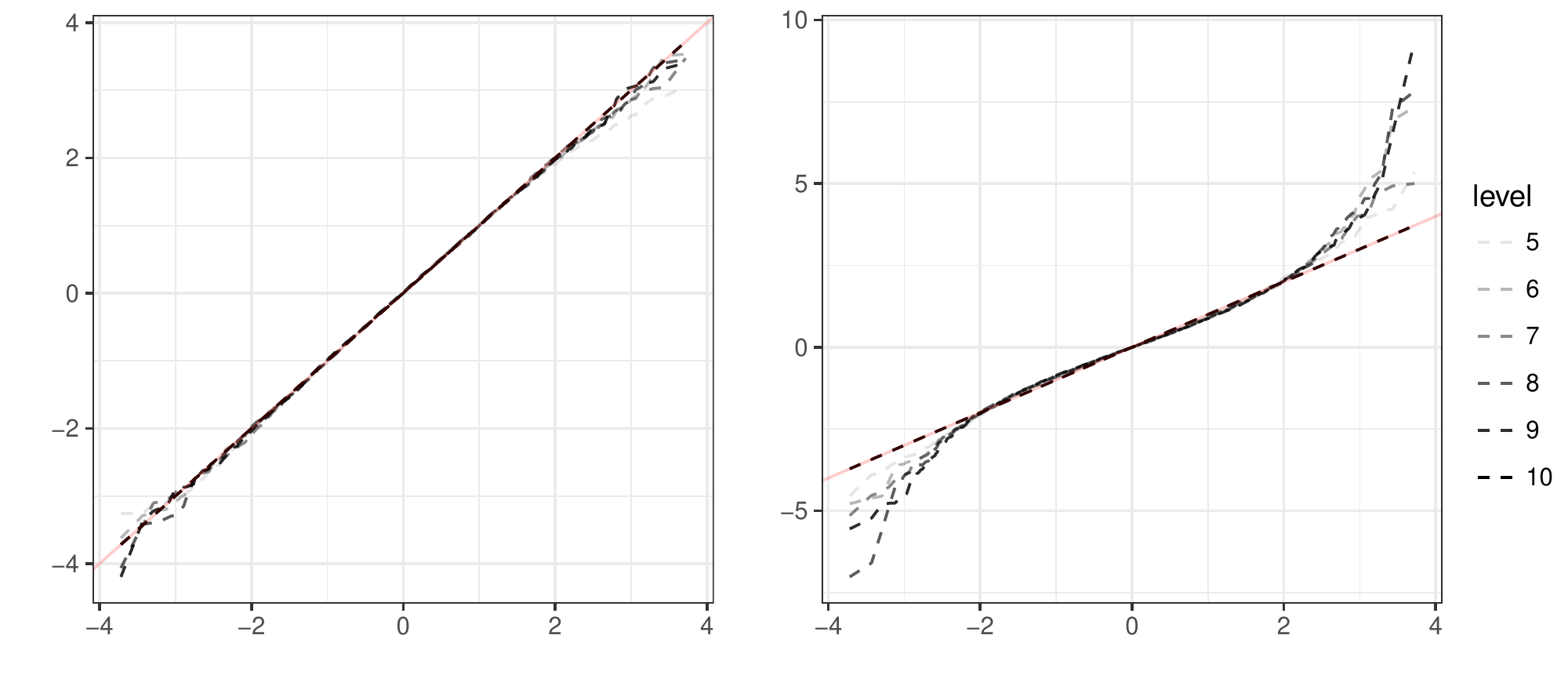}
	\caption{Q-Q plot of $\hat{\mu}_{t,GLS}$ for the Blockmodels with $2$ blocks (left panel) and $3$ blocks (right panel), where $\T$ is a Galton-Watson tree. For each scenario, the Q-Q plot is created over $5000$ replicates. The six dashed Q-Q lines with different colors correspond to $\T$ with 5, 6, 7, 8, 9 or 10 levels. The red solid line is $y=x$. }
	\label{fig_gls_extend}
\end{figure}

\section{Analysis of Adolescent Health Data}\label{section_analysis}
\begin{figure}[t]
	\centering
	\includegraphics[height=14cm]{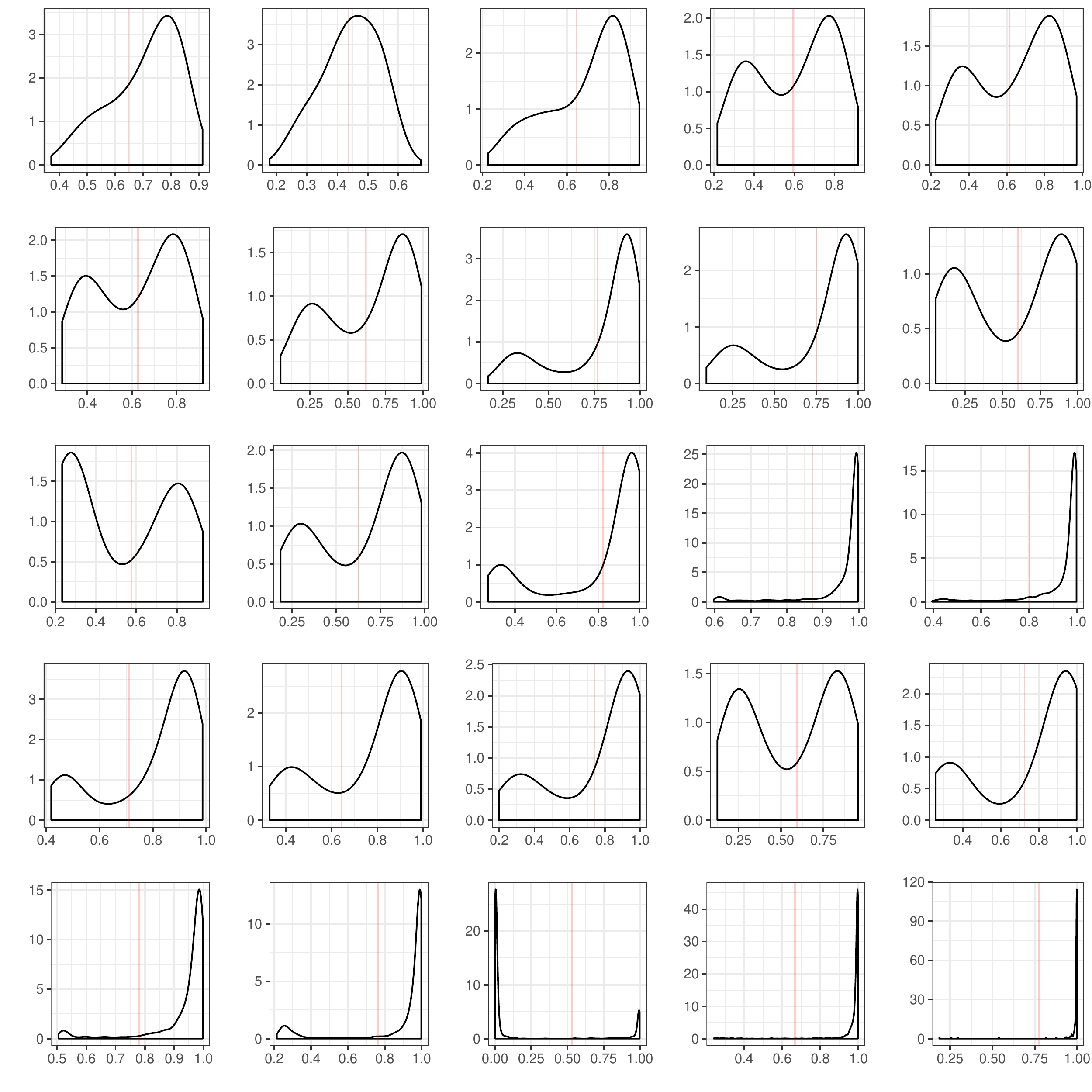}
	\caption{Simulation results based on the Add Health Study described in Section \ref{section_analysis}. The figures display kernel density estimates of the sample average. The 25 subplots correspond to the Comm 17, 75, 42, 15, 28, 39, 40, 41, 50, 34, 45, 48, 36, 43, 61, 54, 59, 73, 44, 68, 60, 58, 84, 57, 49 networks. The red solid line is $x=\mu_{true}$. This figure suggests that VH estimator has multiple modes.}
	\label{fig_adolescent_vh_density}
\end{figure}

\begin{figure}[t]
	\centering
	\includegraphics[height=14cm]{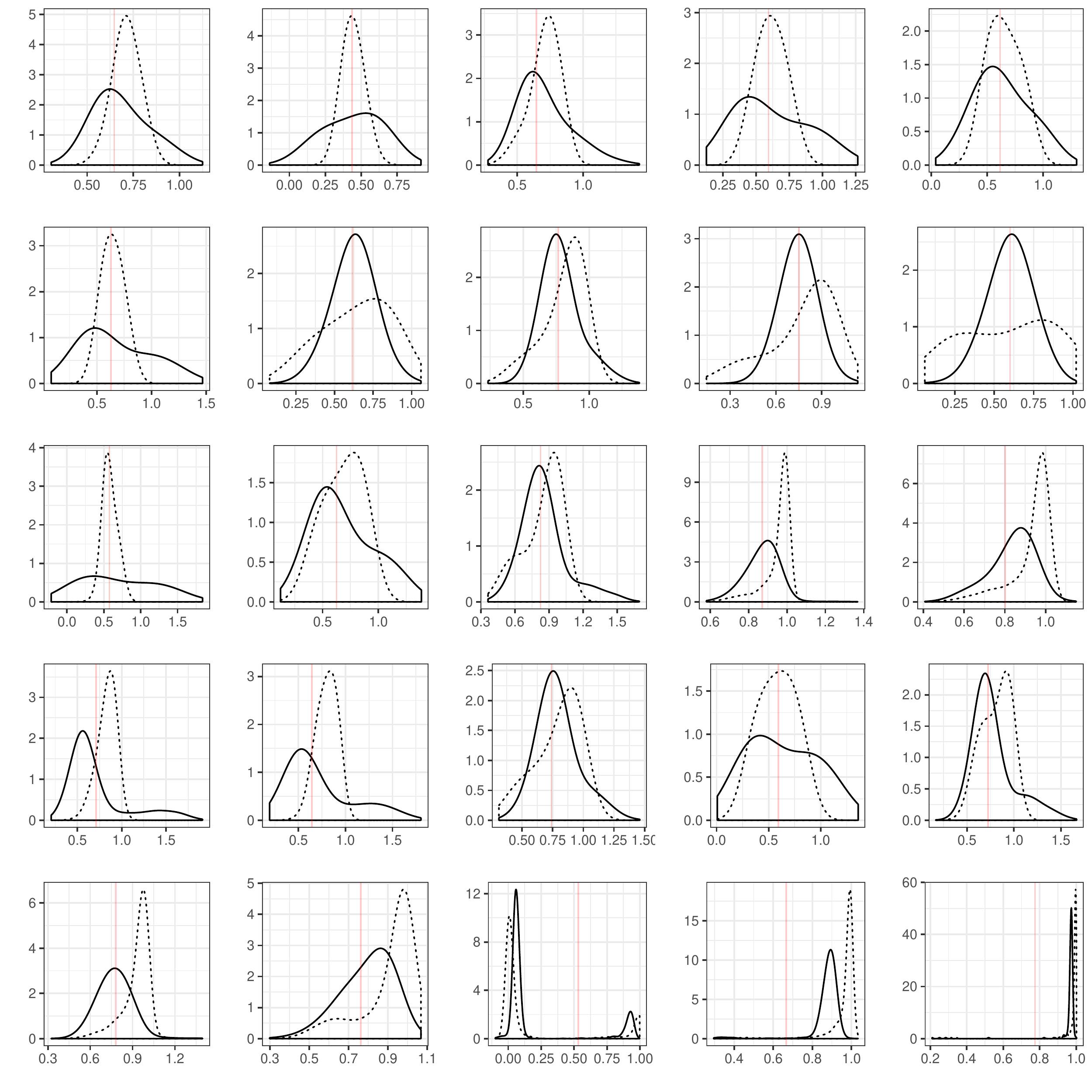}
	\caption{Simulation results based on the Add Health Study described in Section \ref{section_analysis}. The figures display kernel density estimates of the GLS estimator (solid line) and the SBM-fGLS estimator (dashed line). The 25 subplots correspond to the Comm 17, 75, 42, 15, 28, 39, 40, 41, 50, 34, 45, 48, 36, 43, 61, 54, 59, 73, 44, 68, 60, 58, 84, 57, 49 networks. The red solid line is $x=\mu_{true}$. This figure shows that when the bottleneck of the network is not too strong, both estimators have only one mode.}
	\label{fig_adolescent_gls_density}
\end{figure}

\begin{figure}[t]
	\centering
	\includegraphics[height=14cm]{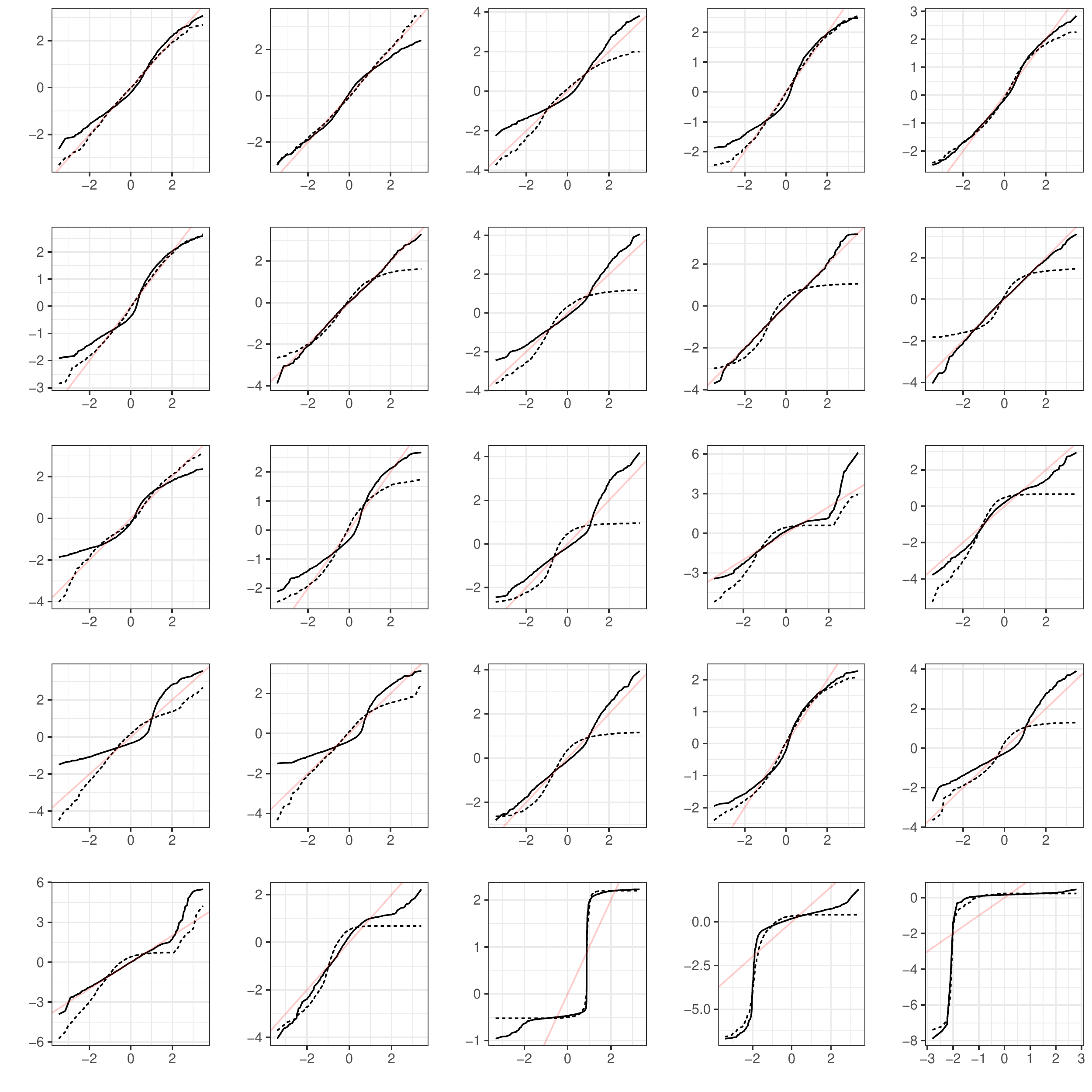}
	\caption{Simulation results based on the Add Health Study described in Section \ref{section_analysis}. The figures display Q-Q plots of the GLS estimator (solid line) and the SBM-fGLS estimator (dashed line). The 25 subplots correspond to the Comm 17, 75, 42, 15, 28, 39, 40, 41, 50, 34, 45, 48, 36, 43, 61, 54, 59, 73, 44, 68, 60, 58, 84, 57, 49 networks. This figure illustrates that when the bottleneck of the network is not too strong, both estimators appear approximately normal (even under without replacement sampling).}
	\label{fig_adolescent_qq}
\end{figure}

In this section, we consider numerical experiments where the RDS samples are simulated without replacement from empirically derived social networks. Specifically, we use social networks collected in the National Longitudinal Study of Adolescent Health (Add Health). In the 1994-95 school year, the Add Health study collected a nationally representative sample of adolescents in grades seven through twelve. The sample covers 84 pairs of middle and high schools in which students nominated up to five male and five female friends in their middle or high school network \citep{harris2011national}. 

In this analysis, we consider 25 networks with at least $1000$ nodes. All contacts are symmetrized and all graphs are restricted to the largest connected component. The RDS sampling process is initialized from a seed node which is selected with probability proportional to node degree (i.e. the stationary distribution). Then, each participant recruits $\xi\sim 1+\Rm{Binomial}(2,1/2)$ participants uniformly at random from their contacts whom have not yet been recruited. If the participant has fewer than $\xi$ contacts eligible to recruit, then the participant recruits all of their eligible contacts. The RDS process stops when there are $500$ participants. If the process terminates before collecting $500$ participants, then the process is restarted. For each network, we collect $500$ different RDS samples. We generate $2000$ such simulated data sets.

We use school-status as the binary node feature and focus on estimating the proportion of the population in high school. We construct a sample average, a GLS estimator and a SBM-fGLS estimator for the proportion of students in high school. The GLS estimator requires an estimate of the covariance matrix  $\Sigma$, which can be calculated from the Markov transition matrix of the network (typically not available in practice) and equation (6) in \cite{rohe2019critical}. The SBM-fGLS estimator proposed in \cite{roch2018generalized} estimates $\Sigma$ using the RDS samples. 

Consider a measure of the network bottleneck. Let $\bA\in\R^{N\times N}$ denote the adjacency matrix of the network. Define the diagonal matrix  $\bD\in\R^{N\times N}$ and the matrix  $\bL\in\R^{N\times N}$ so that
\begin{equation*}
D_{ii}=\sum_{k=1}^{N} W_{ik}, \quad \bL=\bD^{-1/2} \bA \bD^{-1/2}.
\end{equation*}
Then $\tilde{\lambda}$ is defined as
\begin{equation}\label{equ:bottle}
\tilde{\lambda}=\tilde{\by}^\top \bL \tilde{\by}, 
\end{equation}
where $\tilde{\by}$ is the standardized form of the node feature $\by$, so that $\sum_{i=1}^{N} \tilde{y}_i=0$ and $\twonorm{\tilde{\by}}=1$. $\tilde{\lambda}$ provides a measure of the network bottleneck; as long as the second eigenvalue $\lambda_2$ is not too close to $1$, then this quantity will not be close to $1$. Table \ref{table_lambda} displays the $\tilde{\lambda}$ of the $25$ networks. 

\begin{table}[htpb]
	\centering
	\caption{Network characteristics for the  25 networks in the Add Health study used in the numerical experiments in Section \ref{section_analysis}. ID gives the network ID (school ID) from the study listed in increasing order by $\tilde{\lambda}$,  a measure of the strength of bottleneck in the network, see \eqref{equ:bottle}.}
	\label{table_lambda}
	\begin{tabular}{|cc|cc|cc|cc|cc|}
		ID & $\tilde{\lambda}$ & ID & $\tilde{\lambda}$ & ID & $\tilde{\lambda}$ & ID & $\tilde{\lambda}$ &  ID & $\tilde{\lambda}$ \\ \hline
		17   & 0.739             & 39   & 0.842             & 45   & 0.869             & 54   & 0.879             & 60   & 0.911             \\
		75   & 0.744             & 40   & 0.844             & 48   & 0.869             & 59   & 0.881             & 58   & 0.917             \\
		42   & 0.771             & 41   & 0.847             & 36   & 0.874             & 73   & 0.886             & 84   & 0.923             \\
		15   & 0.818             & 50   & 0.867             & 43   & 0.874             & 44   & 0.889             & 57   & 0.925             \\
		28   & 0.839             & 34   & 0.868             & 61   & 0.878             & 68   & 0.897             & 49   & 0.944             \\ 
	\end{tabular}
\end{table}

In Figure \ref{fig_adolescent_vh_density}, the 25 subplots show the kernel density estimation of VH estimator corresponding to the 25 networks. In Figure \ref{fig_adolescent_gls_density} and \ref{fig_adolescent_qq}, the 25 subplots show the kernel density estimation and quantile-quantile plots of GLS and SBM-fGLS estimator with VH adjustment corresponding to the 25 networks. We plot these results over $2000$ replicates. The 25 subplots are in order of descending $\tilde{\lambda}$. It is clear that the VH estimator has two modes, so these networks are all beyond the critical threshold. Except for networks with extremely strong bottleneck (i.e. with large $\tilde{\lambda}$), the GLS estimators with VH adjustment are approximately normally distributed. The distribution of SBM-fGLS estimator with VH adjustment are not enough close to the normal distribution for some networks, which means that our results for the GLS estimator might not always hold for the SBM-fGLS estimator. It is possible for the GLS estimator to exceed one. In practice, one would provide a modified estimate capped at one.

\section{Proof outlines for the main results}
This section outlines the proofs for Theorems \ref{thm_mut} and~\ref{thm_gls}. Well-established theory for multi-type branching processes and martingale limit theorems play an important role. For each proof, the main idea is to extract the underlying martingale structure for the estimator; it is this structure that determines the asymptotic behavior. The proofs of Corollary~\ref{cor_decay}, \ref{cor_VH} and \ref{cor_glsVH} are relegated to Appendices~\ref{S:app-sample}
and~\ref{S:app-GLS}.

\subsection{Analysis of the sample average}
Denote $\bZ_{t,j}$ as the number of $j\in G$ in the $t$-th generation and define $\bZ_t=(Z_{t,1},\ldots,Z_{t,N})$. When $\T$ is an $m$-tree and $X_0$ is initialized from $i\in G$,
$$\bZ_t^{(i)}=(Z_{t,1}^{(i)},\ldots,Z_{t,N}^{(i)})$$
is a multitype Galton-Watson process \citep{harris2002theory,athreya2004branching}. The next lemma can be derived from a standard result in the literature of multitype Galton-Watson processes.
\begin{lemma}
	\label{lem:martingale}
	Assume the conditions of Theorem \ref{thm_mut}. For any $j=1,\ldots,N$, $$Y_{t,j}^{(i)}=(m\lambda_j)^{-t} \langle \bZ_t^{(i)}, \bbf_j\rangle$$ is a real-valued martingale adapted to $\F_t=\sigma\{\bZ_l^{(i)}:1\leq l\leq t\}$.
\end{lemma}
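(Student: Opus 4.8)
The plan is to recognize $\{\bZ_t^{(i)}\}$ as a multitype Galton--Watson process and to invoke the classical eigenvector-martingale construction for such processes. Because $\T$ is an $m$-tree, every sampled individual refers exactly $m$ offspring, and conditionally on the parent occupying node $k\in G$ each offspring independently lands at node $\ell$ with probability $P_{k\ell}$. Hence the process has $N$ types (one per node of $G$) with offspring-mean matrix $M=m\bP$, i.e.\ $M_{k\ell}=mP_{k\ell}$ is the expected number of type-$\ell$ children of a type-$k$ parent. The standard fact I would use is that if $\bv$ is a right eigenvector of the mean matrix $M$ with eigenvalue $\rho\neq 0$, then $\rho^{-t}\langle\bZ_t,\bv\rangle$ is a martingale; I then verify this directly in our setting with $\bv=\bbf_j$ and $\rho=m\lambda_j$, the point being that $\bbf_j$ is a right eigenvector of $M$ with eigenvalue $m\lambda_j$ since it is an eigenvector of $\bP$.

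First I would fix $\langle\cdot,\cdot\rangle$ as the plain Euclidean pairing $\langle\bu,\bv\rangle=\sum_{k}u_kv_k$ (\emph{not} the $\bpi$-weighted inner product of Lemma~\ref{lem_eigendecomposition}). Advancing one generation, each generation-$t$ individual reproduces independently, so
\begin{equation*}
\EE\!\left[Z_{t+1,\ell}^{(i)}\mid \F_t\right]=m\sum_{k} Z_{t,k}^{(i)}\,P_{k\ell}.
\end{equation*}
Pairing against $\bbf_j$ and using the eigenrelation $\sum_{\ell}P_{k\ell}f_j(\ell)=\lambda_j f_j(k)$ gives
\begin{equation*}
\EE\!\left[\langle \bZ_{t+1}^{(i)},\bbf_j\rangle\mid \F_t\right]
=m\sum_{k} Z_{t,k}^{(i)}\sum_{\ell}P_{k\ell}f_j(\ell)
=m\lambda_j\sum_{k}Z_{t,k}^{(i)}f_j(k)
=m\lambda_j\,\langle \bZ_t^{(i)},\bbf_j\rangle.
\end{equation*}
Dividing by $(m\lambda_j)^{t+1}$ yields $\EE[Y_{t+1,j}^{(i)}\mid\F_t]=Y_{t,j}^{(i)}$, which is the martingale identity.

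It then remains to dispatch the routine regularity conditions. Since $\T$ is an $m$-tree, the $t$-th generation contains exactly $m^t$ individuals, so $\sum_k Z_{t,k}^{(i)}=m^t$ is deterministic and $\bZ_t^{(i)}$ is a bounded random vector; consequently $Y_{t,j}^{(i)}$ is bounded for every fixed $t$ and in particular lies in $L^1$. Adaptedness is immediate because $Y_{t,j}^{(i)}$ is a deterministic function of $\bZ_t^{(i)}$, hence $\F_t$-measurable. Finally, reversibility of $\bP$ ensures via Lemma~\ref{lem_eigendecomposition} that $\lambda_j$ and $\bbf_j$ are real, so $Y_{t,j}^{(i)}$ is real-valued, and the normalization $(m\lambda_j)^{-t}$ is well-defined for the eigenvalues of interest since $\lambda_j\neq 0$ (in particular $\lambda_2\in(0,1)$ under \eqref{eq_eigenvalue}).

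I do not anticipate a genuine obstacle: the content is the one-line eigenvector computation above. The only points demanding care are (i) identifying the correct offspring-mean matrix $M=m\bP$ rather than $\bP$ itself, and (ii) using the unweighted Euclidean pairing so that $\bP\bbf_j=\lambda_j\bbf_j$ applies cleanly, since the $\bpi$-weighted inner product would introduce an extra factor $\pi(\ell)$ that does not collapse under the eigenvalue equation.
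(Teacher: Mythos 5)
Your proof is correct and follows essentially the same route as the paper: both identify $\{\bZ_t^{(i)}\}$ as a multitype Galton--Watson process with offspring-mean matrix $\bM=m\bP$, apply the eigenvector-martingale fact with $\bxi=\bbf_j$ and eigenvalue $m\lambda_j$ (under the unweighted Euclidean pairing), and invoke reversibility via Lemma~\ref{lem_eigendecomposition} to conclude the martingale is real-valued. The only difference is presentational: the paper cites the classical result (Theorem~4$'$ of Athreya and Ney) where you verify the one-step conditional expectation directly, which is a harmless and self-contained substitute.
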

\begin{proof}
	See Appendix \ref{subsec-lem-martingale}.
\end{proof}

Let $W_t$ denote the summation of the $t$-th generation RDS samples,
\[W_t = \sum_{\sigma\in\T:\abs{\sigma}=t} y(X_\sigma),\]
and let $S_t = \sum_{j = 0}^t W_j$ denote the summation up to generation $t$.  Recall that $n_t$ is the number of nodes in $\T$ between the root $0$ and generation $t$ (inclusive), i.e. $n_t = |\{\sigma\in\T,\abs{\sigma}\le t\}|$. Thus the sample average up to generation $t$ is $\hat{\mu}_t = S_t / n_t$. Superscripts on $\bZ, S$ and  $W$ will denote how $X_0$ is initialized if necessary.

Recall from \eqref{eq_eigendecompose} and $\bbf_1=\bm{1}$ that
\begin{equation*}
\by=\sum_{j=1}^{N} \langle \by,\bbf_j \rangle_{\bpi} \bbf_j=\EE_{\bpi}(y)\bm{1}+\sum_{j=2}^{N} \langle \by,\bbf_j \rangle_{\bpi} \bbf_j.
\end{equation*} 
As a result, one obtains the following decomposition of $W_t^{(i)}$
\begin{equation}
\label{eq:Wt-decomposition}
W_t^{(i)}=\sum_{\sigma\in\T,\abs{\sigma}=t} y(X_\sigma^{(i)})=\by^\top \bZ_t^{(i)}=m^t\EE_{\bpi} (y)+\sum_{j=2}^{N} \langle \by,\bbf_j \rangle_{\bpi} (m\lambda_j)^t Y_{t,j}^{(i)}.
\end{equation}
The last step utilizes the simple fact that $\bm{1}^\top\bZ_t^{(i)}=m^t$. This motivates us to study the limit distribution of $Y_{t,j}^{(i)}$. 

\begin{lemma}
	\label{lem:martingale-convergence}
	Assume the conditions of Theorem \ref{thm_mut}. Then there exists a random variable $Y_2^{(i)}$ such that $$Y_{t,2}^{(i)} \to Y_2^{(i)}$$ almost surely and in $L^2$.
	For $j\geq 3$, 
	$$(\lambda_2^{-1}\lambda_j)^t\,Y_{t,j}^{(i)} \to 0$$ 
	almost surely and in $L^2$.
\end{lemma}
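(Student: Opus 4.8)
The plan is to reduce both statements to the control of a single normalized functional and its second moment. Observe that the two conclusions concern exactly
\[
M_{t,j}^{(i)} := (m\lambda_2)^{-t}\,\langle \bZ_t^{(i)}, \bbf_j\rangle,
\]
since $M_{t,2}^{(i)} = Y_{t,2}^{(i)}$, while for $j\ge 3$ the identity $(m\lambda_2)^{-t} = (\lambda_2^{-1}\lambda_j)^t(m\lambda_j)^{-t}$ gives $M_{t,j}^{(i)} = (\lambda_2^{-1}\lambda_j)^t Y_{t,j}^{(i)}$. Everything then follows from understanding the growth of $u_t^{(j)} := \EE[\langle \bZ_t^{(i)},\bbf_j\rangle^2]$ relative to $(m\lambda_2)^{2t}$, so the first step is to derive a one-step recursion for $u_t^{(j)}$.

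For the recursion I would condition on $\F_t$ and exploit the branching structure. Because $\T$ is an $m$-tree, each generation-$t$ individual has exactly $m$ children whose types are i.i.d.\ draws from $\bP$ given the parent's type, and the subtrees rooted at distinct generation-$t$ nodes are conditionally independent. Lemma \ref{lem:martingale} already provides the conditional mean $\EE[\langle \bZ_{t+1}^{(i)},\bbf_j\rangle \mid \F_t] = m\lambda_j\langle \bZ_t^{(i)},\bbf_j\rangle$. For the conditional variance, contributions of distinct parents add, and a single type-$k$ parent contributes variance $m\,v_j(k)$ with $v_j(k) := \mathsf{Var}(f_j(\text{child})\mid \text{parent}=k) = \sum_{\ell} P_{k\ell} f_j(\ell)^2 - \lambda_j^2 f_j(k)^2 \ge 0$, so that $\mathsf{Var}(\langle \bZ_{t+1}^{(i)},\bbf_j\rangle \mid \F_t) = m\langle \bm{v}_j, \bZ_t^{(i)}\rangle$. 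The law of total variance then yields
\[
u_{t+1}^{(j)} = (m\lambda_j)^2\,u_t^{(j)} + m\,\langle \bm{v}_j, \EE[\bZ_t^{(i)}]\rangle.
\]
Since $\EE[\bZ_t^{(i)}] = m^t(\bP^t)^\top e_i$ and $\bP$ is stochastic, $\langle \bm{v}_j, \EE[\bZ_t^{(i)}]\rangle = m^t(\bP^t\bm{v}_j)(i) \le C m^t$ uniformly in the seed $i$, where $C=\max_k v_j(k)<\infty$; hence $u_{t+1}^{(j)} \le (m\lambda_j)^2 u_t^{(j)} + C m^t$.

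Unrolling this linear recursion gives $u_t^{(j)} = O\!\big(\max\{(m\lambda_j)^2, m\}^{\,t}\big)$, with an extra factor $t$ in the borderline case $(m\lambda_j)^2=m$. For $j=2$, the hypothesis $m>\lambda_2^{-2}$, i.e.\ $m\lambda_2^2>1$, gives $(m\lambda_2)^2 = m\cdot(m\lambda_2^2) > m$, so $u_t^{(2)} = O((m\lambda_2)^{2t})$ and therefore $\EE[(Y_{t,2}^{(i)})^2] = (m\lambda_2)^{-2t}u_t^{(2)} = O(1)$. Thus $Y_{t,2}^{(i)}$ is an $L^2$-bounded martingale, and Doob's $L^2$ martingale convergence theorem delivers a limit $Y_2^{(i)}\in L^2$ with convergence both almost surely and in $L^2$, establishing the first claim. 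For $j\ge 3$, the same hypothesis together with $|\lambda_j|<\lambda_2$ forces both $(m\lambda_j)^2<(m\lambda_2)^2$ and $m<(m\lambda_2)^2$, so $\max\{(m\lambda_j)^2,m\}<(m\lambda_2)^2$ and hence $\EE[(M_{t,j}^{(i)})^2] = (m\lambda_2)^{-2t}u_t^{(j)} \to 0$ at a geometric rate (up to a polynomial factor). This is the desired $L^2$ convergence to $0$; the bound is summable, so $\sum_t \EE[(M_{t,j}^{(i)})^2]<\infty$ forces $\sum_t (M_{t,j}^{(i)})^2<\infty$ almost surely by Tonelli, whence $M_{t,j}^{(i)}\to 0$ almost surely.

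The main obstacle is the clean derivation and exploitation of the variance recursion. The delicate points are verifying that the conditional variance collapses to the single linear functional $m\langle \bm{v}_j, \bZ_t^{(i)}\rangle$ — which relies on the deterministic offspring number of the $m$-tree, as a Galton--Watson tree would contribute an extra offspring-count variance term — and bounding the inhomogeneous term uniformly in $i$ through the boundedness of $\bP^t$. Everything then rests on the single inequality $m\lambda_2^2>1$: it is precisely what makes the second-eigenvalue martingale term dominate the forcing (yielding $L^2$-boundedness for $j=2$) and, simultaneously, what pushes the subdominant modes $j\ge 3$ below the $(m\lambda_2)^{2t}$ scale (yielding decay). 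Recognizing this threshold as the common mechanism behind both halves is the conceptual crux.
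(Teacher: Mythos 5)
Your proposal is correct and follows essentially the same route as the paper: both hinge on showing $\EE[\langle \bZ_t^{(i)},\bbf_j\rangle^2]$ grows like $\max\{(m\lambda_j)^2,m\}^t$ (you derive the recursion directly via the law of total variance, while the paper cites the standard covariance iteration for multitype Galton--Watson processes and proves it as Claim \ref{var_thm}), then apply the $L^2$ martingale convergence theorem for $j=2$ and deduce almost sure convergence for $j\ge 3$ from the summability of the second moments (you via Tonelli, the paper via Chebyshev plus Borel--Cantelli). These are only cosmetic differences; the threshold mechanism $m\lambda_2^2>1$ is used identically in both arguments.
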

\begin{proof}
	See Appendix \ref{subsec-lem-martingale-converence}.
\end{proof}

Lemma \ref{lem:martingale-convergence} informally reveals that, under proper scaling, the asymptotic distributional characterization of $W_t^{(i)}$ is determined by $Y_{t,2}^{(i)}$. The next lemma derives the first and second moments of $Y_2^{(i)}$.
\begin{lemma}
	\label{lem:nondegenerate-Y2} 
	Assume the conditions of Theorem \ref{thm_mut}. Then $\EE Y_2^{(i)}=f_2(i)$, and $\mathsf{Var}(Y_2^{(i)})>0$ for any $i=1,\ldots,N$ if we further assume that $\langle \by, \bbf_2 \rangle_{\bpi}\neq 0$ holds.
\end{lemma}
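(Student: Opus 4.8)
The plan is to treat the two assertions separately: the mean follows immediately from the martingale property established in Lemma~\ref{lem:martingale}, while the variance requires exploiting the self-similar (branching) structure of $Y_{t,2}^{(i)}$ to set up and solve a linear recursion for the variances. First, since Lemma~\ref{lem:martingale-convergence} gives $Y_{t,2}^{(i)}\to Y_2^{(i)}$ in $L^2$ (hence in $L^1$), and since $Y_{t,2}^{(i)}$ is a martingale by Lemma~\ref{lem:martingale}, I would write $\EE Y_2^{(i)}=\lim_{t}\EE Y_{t,2}^{(i)}=\EE Y_{0,2}^{(i)}$. At generation $0$ the process consists of a single individual of type $i$, so $\bZ_0^{(i)}=\eb_i$ and $Y_{0,2}^{(i)}=(m\lambda_2)^{0}\langle \eb_i,\bbf_2\rangle=f_2(i)$, which is deterministic; this yields $\EE Y_2^{(i)}=f_2(i)$.

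For the variance, I would use that on an $m$-tree the root of type $i$ has $m$ children with iid types $C_1,\dots,C_m\sim P_{i,\cdot}$, and given these types the $m$ subtrees evolve independently. This gives the one-step identity $Y_{t+1,2}^{(i)}=(m\lambda_2)^{-1}\sum_{k=1}^m Y_{t,2}^{(C_k),k}$, where the summands are conditionally independent given $(C_1,\dots,C_m)$ with $Y_{t,2}^{(C_k),k}\overset{d}{=}Y_{t,2}^{(C_k)}$ and conditional mean $f_2(C_k)$ (again by the martingale property). Writing $\bv_t=(\mathsf{Var}(Y_{t,2}^{(1)}),\dots,\mathsf{Var}(Y_{t,2}^{(N)}))$ and applying the law of total variance, the between-type component produces a $\bP$-average of $\bv_t$ and the within-type component produces the offspring variance of $f_2$:
\[
v_{t+1}(i)=\frac{1}{m\lambda_2^2}\big[(\bP\bv_t)_i+g(i)\big],\qquad g(i)=(\bP\bbf_2^{\circ 2})_i-\lambda_2^2 f_2(i)^2=\mathsf{Var}_{P_{i,\cdot}}(f_2)\ge 0,
\]
with $\bv_0=\bm{0}$. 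Since $m\lambda_2^2>1$ exceeds the spectral radius of $\bP$, iterating and passing to the limit (the limit $v(i)=\mathsf{Var}(Y_2^{(i)})$ exists and equals $\lim_t v_t(i)$ by the $L^2$ convergence of Lemma~\ref{lem:martingale-convergence}) gives
\[
\bv=(m\lambda_2^2\bI-\bP)^{-1}\bg=\sum_{j\ge 0}(m\lambda_2^2)^{-(j+1)}\bP^j\bg,
\]
a Neumann series with nonnegative entries, so $\bv\ge \bm{0}$ entrywise, consistent with these being variances.

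The positivity step is where the real argument lies. Because every term in the series is nonnegative, $v(i)=0$ forces $(\bP^j\bg)_i=0$ for all $j\ge 0$. The eigenvalue hypothesis $1=\lambda_1>\lambda_2$ makes the eigenvalue $1$ simple, so $\bP$ is irreducible; hence for each state $b$ there is $j$ with $(\bP^j)_{ib}>0$, and $(\bP^j\bg)_i=0$ for all $j$ can hold only if $\bg=\bm 0$. But $g(b)=0$ for all $b$ means $f_2$ is $P_{b,\cdot}$-almost-surely constant, equal to its mean $\lambda_2 f_2(b)$; thus $f_2(c)=\lambda_2 f_2(b)$ whenever $w_{bc}>0$. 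Undirectedness ($w_{bc}=w_{cb}$) then yields $f_2(b)=\lambda_2^2 f_2(b)$, and since $0<\lambda_2<1$ (as $\lambda_2>|\lambda_3|\ge 0$ and $\lambda_2<1$) this forces $\bbf_2=\bm 0$, contradicting $\langle\bbf_2,\bbf_2\rangle_{\bpi}=1$. Therefore $v(i)>0$ for every $i$. I expect this irreducibility-plus-reversibility argument to be the main obstacle, together with the bookkeeping needed to justify passing the finite-$t$ variance recursion to the limit via the $L^2$ bound. I would also note that this variance lower bound is intrinsic to $\bbf_2$ and the branching dynamics and does not in fact use $\langle\by,\bbf_2\rangle_{\bpi}\neq 0$; that hypothesis is only needed downstream, where $\mathsf{Var}(X^{(i)})$ in Theorem~\ref{thm_mut} inherits a factor of $\langle\by,\bbf_2\rangle_{\bpi}^2$.
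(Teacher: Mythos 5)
Your proposal is correct, and while the mean computation and the resulting series for the variance coincide with the paper's, you reach them by a different route and your positivity argument is genuinely different (and in one respect sharper). For the variance, the paper does not set up a law-of-total-variance recursion; it instead quotes the standard second-moment iteration for multitype Galton--Watson processes, $\bC_t^{(i)}=(\bM^\top)^t\bC_0^{(i)}\bM^t+\sum_{l=1}^t(\bM^\top)^{t-l}\bigl(\sum_k \bV_k\,\EE Z_{l-1,k}^{(i)}\bigr)\bM^{t-l}$ with $\bM=m\bP$, and projects onto $\bbf_2$ to get $(m\lambda_2)^{-2t}\mathsf{Var}(\langle\bZ_t^{(i)},\bbf_2\rangle)=\sum_{l=1}^t(m\lambda_2)^{-2l}\sum_k(\bbf_2^\top\bV_k\bbf_2)\,\EE Z_{l-1,k}^{(i)}$; since $\bbf_2^\top\bV_k\bbf_2=m\,\mathsf{Var}_{P_{k\cdot}}(f_2)$ and $\EE Z_{l-1,k}^{(i)}=m^{l-1}(\bP^{l-1})_{ik}$, this is exactly your Neumann series, so the two computations agree term by term. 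Where you truly diverge is the positivity step: the paper bounds the sum below by $c\sum_l(m\lambda_2^2)^{-l}$ with $c=\min_k\bbf_2^\top\bV_k\bbf_2$, asserting $c>0$ because $f_2$ is non-constant; that assertion is actually delicate, since $f_2$ can be constant on the $P_{k\cdot}$-neighborhood of some particular $k$ (e.g.\ at a leaf of a star) even though it is globally non-constant, in which case $c=0$ and the paper's displayed lower bound degenerates. Your argument --- nonnegativity of every term forces $(\bP^j\bg)_i=0$ for all $j$, irreducibility (which here does follow from simplicity of the eigenvalue $1$ because $w$ is symmetric, so communicating classes are connected components) forces $\bg=\bm{0}$, and reversibility plus $0<\lambda_2<1$ then forces $\bbf_2=\bm{0}$, a contradiction --- closes exactly this loophole and shows positivity for every seed $i$ even when some neighborhoods carry zero offspring variance of $f_2$. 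Your closing observation is also accurate: neither your proof nor the paper's uses $\langle\by,\bbf_2\rangle_{\bpi}\neq 0$ for $\mathsf{Var}(Y_2^{(i)})>0$; that hypothesis only matters when the factor $\langle\by,\bbf_2\rangle_{\bpi}^2$ multiplies this variance in Theorem \ref{thm_mut}.
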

\begin{proof}
	See Appendix \ref{subsec-lem-Y2}.
\end{proof}

\subsection{Proof of Theorem \ref{thm_mut}}
	Apply \eqref{eq:Wt-decomposition} and Lemma \ref{lem:martingale-convergence} collectively to obtain
	\begin{equation}
	\label{eq_Wt}
	\frac{W_t^{(i)}-m^t\EE_{\bpi} (y)}{(m\lambda_2)^t}=\sum_{j=2}^{N} \langle \by,\bbf_j \rangle_{\bpi} (\lambda_2^{-1}\lambda_j)^t Y_{t,j}^{(i)} \rightarrow \langle \by,\bbf_2 \rangle_{\bpi} Y_{2}^{(i)}
	\end{equation}
	almost surely and in $L^2$. Recalling that $S_t=\sum_{j=0}^t W_j$, $\hat{\mu}_t=S_t/n_t$, 
	and the number of samples between $0$ and generation $t$ is
	$n_t=\sum_{l=0}^{t}m^l$, one arrives at
	\begin{equation}
	\label{eq_as_converge}
	\lambda_2^{-t}\left[\hat{\mu}_t^{(i)}-\EE_{\bpi} (y)\right]=\frac{S_t^{(i)}-n_t \EE_{\bpi} (y)}{n_t\lambda_2^t}=\frac{m^t}{n_t} \sum_{l=0}^{t} (m\lambda_2)^{l-t}\frac{W_l^{(i)}-m^l \EE_{\bpi} (y)}{(m\lambda_2)^l}.
	\end{equation}
	Since $\lim_{t\to\infty}m^t/n_t=(m-1)/m$, from \eqref{eq_Wt} 
	\begin{equation}
	\label{eq:mu_converge}
	\lambda_2^{-t}\left[\hat{\mu}_t^{(i)}-\EE_{\bpi} (y)\right] \to \frac{m-1}{m}\sum_{r=0}^{\infty} (m\lambda_2)^{-r}\langle \by,\bbf_2 \rangle_{\bpi} Y_{2}^{(i)}=\frac{(m-1)\lambda_2}{m\lambda_2-1}\langle \by,\bbf_2 \rangle_{\bpi} Y_{2}^{(i)}\triangleq X^{(i)}
	\end{equation}
	almost surely as $t\to\infty$. 
	
	To prove $L^2$ convergence, recall that if a sequence of random variables $X_n \to X$ in probability, and $\Vert X_n \Vert_{L^2} \to \Vert X \Vert_{L^2}$, then $X_n \to X$ in $L^2$. Observe, similarly to the above limits, that
	\begin{align*}
	\left\Vert \lambda_2^{-t}(\hat{\mu}_t^{(i)}-\EE_{\bpi} (y)) \right\Vert_{L^2}^2&=\frac{m^{2t}}{n_t^2}\EE \left[\left(\sum_{l=0}^{t} \frac{W_l^{(i)}-m^l \EE_{\bpi} (y)}{(m\lambda_2)^t}\right)^2\right]\\
	&=\frac{m^{2t}}{n_t^2}\sum_{k=1}^{t}\sum_{l=1}^{t} \EE\left\{(m\lambda_2)^{-2t}\left[W_k^{(i)}-m^k \EE_{\bpi} (y)\right] \left[W_l^{(i)}-m^l \EE_{\bpi} (y)\right]\right\} \\
	&\xrightarrow{t\to\infty}\lim_{r\to\infty}
	\left(\frac{m-1}{m}\right)^2\EE\left[\left(\langle \by,\bbf_2 \rangle_{\bpi} Y_{2}^{(i)}\right)^2\right]\sum_{k=1}^{r}\sum_{l=1}^{r} (m\lambda_2)^{k+l-2r} \\
	&=\EE\left[\left(\frac{(m-1)\lambda_2}{m\lambda_2-1} \langle \by,\bbf_2 \rangle_{\bpi} Y_{2}^{(i)}\right)^2\right].
	\end{align*}
	So the convergence in \eqref{eq:mu_converge} is also in $L_2$.
	
	Finally, in view of Lemma \ref{lem:nondegenerate-Y2} and the definition of $X^{(i)}$ in \eqref{eq:mu_converge}, it is straightforward to check that \eqref{eq_different_expectation} holds. Moreover, if $\langle \by,\bbf_2\rangle_{\bpi}\neq0$, then $\mathsf{Var}(X^{(i)})>0$ for any $i=1,\ldots,N$.

\begin{remark}
	\label{remark:generalize_eig}
	If condition \eqref{eq_eigenvalue} in Theorem \ref{thm_mut} is weakened to
	\begin{equation*}
	1=\lambda_1>\lambda_2=\cdots=\lambda_k>\abs{\lambda_{k+1}}\geq\cdots\geq\abs{\lambda_N},
	\end{equation*}
	then \eqref{eq_mu_converge} becomes
	\begin{equation*}
	\lambda_2^{-t} (\hat{\mu}_t^{(i)}-\EE_{\bpi} (y)) \rightarrow \frac{(m-1)\lambda_2}{m\lambda_2-1} \sum_{j=2}^{k} \langle \by,\bbf_j \rangle_{\bpi}Y_{j}^{(i)}.
	\end{equation*}
\end{remark}

\subsection{Analysis of the GLS estimator}
In previous sections, the subscript of the estimators is $t$ or $l$, which denotes the generation. This section requires us to study each node in a generation. Accordingly we order the nodes of the $m$-tree $\T$ by scanning each level from the root down. For example, for a $2$-tree, the root node is $1$, its offsprings are $2$ and $3$, the offsprings of $2$ are $4$ and $5$, the offsprings of $3$ are $6$ and $7$, etc. In a change of notation from the previous sections, when the subscript is $n$, $\hat{\mu}^{(\cdot)}_{n}$ now denotes the sample mean up to node $n$, i.e.
$\hat{\mu}_{n}=n^{-1}\sum_{k=1}^{n} y(X_k)$. 

Assume the Markov transition matrix between blocks is
$$\cP=\Big(\begin{matrix}
p & 1-p \\ 1-q & q
\end{matrix}\Big).$$
The second eigenvalue is $\lambda_2=p+q-1$ and the stationary
distribution is $\bpi = (\frac{1-q}{1-\lambda_2},\frac{1-p}{1-\lambda_2})$. For $k\geq 1$, define
\begin{equation}
\label{eq:Mn-martingale}
M_n=\sum_{k=1}^{n} \left[y(X_k^{(\bnu)})-\lambda_2 y(X_{p(k)}^{(\bnu)})\right]-n(1-\lambda_2)\EE_{\bpi} (y),
\end{equation}
where $p(k)$ is the parent node of $k$ in the ordering
defined above.
In view of \eqref{eq_gls_expression}, the relation between $M_n$ and $\hat{\mu}_{\mathsf{GLS,t}}$ is 
\begin{equation}
\label{eq:mtg-gls-relationship}
\sqrt{n_t}\left[\hat{\mu}_{\mathsf{GLS},t}^{(\bnu)}-\EE_{\bpi} (y)\right]=\frac{M_{n_t}}{\sqrt{n_t}(1-\lambda_2)}+O_{\PP}\left(\frac{1}{\sqrt{n_t}}\right).
\end{equation}
Thus it suffices to study the asymptotic behavior of $M_n$. One can check that $M_n$ is a martingale adapted to the filtration
$\F_n=\sigma(X_k^{(\nu)}: k\leq n)$:
\begin{equation*}
\begin{aligned}
\EE\left(M_n-M_{n-1}\mid \cF_{n-1}\right)&=\begin{cases}
p y_1+(1-p) y_2-\lambda_2 y_1-(1-\lambda_2)\EE_{\bpi} (y) \ &\text{if}  \ X_{p(n)}^{(\bnu)}=1\\
(1-q) y_1+q y_2-\lambda_2 y_2-(1-\lambda_2)\EE_{\bpi} (y) \ &\text{if} \ X_{p(n)}^{(\bnu)}=2
\end{cases} =0,
\end{aligned}
\end{equation*}
both of which are $0$, as can be seen from
$(1-\lambda_2)\EE_{\bpi} (y)=(1-q) y_1+(1-p) y_2$ and the expression for $\lambda_2$ above.
It is necessary to introduce a martingale central limit theorem (see e.g. \cite[Fifth Edition, Theorem 8.2.8]{durrett2019probability}).
\begin{theorem}[Martingale CLT]
	\label{thm_martingale_CLT}
	Let a martingale $M_n$ satisfy $\EE(M_n)=0$, and
	\begin{enumerate}
		\item $\frac{1}{n} \sum_{k=1}^{n} \EE((M_k-M_{k-1})^2\mid M_1,\ldots,M_{k-1})\rightarrow \sigma^2>0$ in probability as $n\to \infty$, and
		\item for every $\epsilon>0$, $\frac{1}{n} \sum_{k=1}^{n} \EE((M_k-M_{k-1})^2;  \abs{M_k-M_{k-1}}>\epsilon\sqrt{n})\rightarrow 0$ as $n\to\infty$,
	\end{enumerate}
	then $M_n/\sqrt{n}\to \cN(0,\sigma^2)$ in distribution as $n\to\infty$.
\end{theorem}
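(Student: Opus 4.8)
The plan is to prove the result through characteristic functions together with an auxiliary exponential martingale, which is the classical route to the Lindeberg--Feller martingale CLT. By L\'evy's continuity theorem it suffices to show that $\EE[\exp(itM_n/\sqrt{n})]\to\exp(-\sigma^2 t^2/2)$ for every fixed $t\in\RR$. Writing $\xi_{n,k}=(M_k-M_{k-1})/\sqrt{n}$, the normalized sum $M_n/\sqrt{n}=\sum_{k=1}^n\xi_{n,k}$ becomes the row sum of a martingale-difference array adapted to $\F_k=\sigma(M_1,\dots,M_k)$; set $v_{n,k}=\EE[\xi_{n,k}^2\mid\F_{k-1}]$ and $V_n^2=\sum_{k=1}^n v_{n,k}$, so that hypothesis (1) reads $V_n^2\to\sigma^2$ in probability and hypothesis (2) is the Lindeberg condition $\sum_{k=1}^n\EE[\xi_{n,k}^2;\abs{\xi_{n,k}}>\epsilon]\to 0$.

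First I would carry out a truncation. Fix $\epsilon>0$ and replace each increment by $\bar\xi_{n,k}=\xi_{n,k}\mathbf{1}\{\abs{\xi_{n,k}}\le\epsilon\}-\EE[\xi_{n,k}\mathbf{1}\{\abs{\xi_{n,k}}\le\epsilon\}\mid\F_{k-1}]$, which is again a martingale-difference array but now satisfies $\abs{\bar\xi_{n,k}}\le 2\epsilon$. Since the differences $\xi_{n,k}-\bar\xi_{n,k}$ are themselves orthogonal martingale differences, the Lindeberg condition gives $\EE[(\sum_k(\xi_{n,k}-\bar\xi_{n,k}))^2]=\sum_k\EE[(\xi_{n,k}-\bar\xi_{n,k})^2]\to 0$, so the two row sums differ negligibly; the same condition forces the truncated conditional variances $\bar v_{n,k}=\EE[\bar\xi_{n,k}^2\mid\F_{k-1}]$ to retain the limit $\sum_k\bar v_{n,k}\to\sigma^2$. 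Hence it suffices to prove the theorem for the bounded array, deferring the limit $\epsilon\downarrow 0$ to the end.

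Next I would build the exponential martingale. Let $a_{n,k}=\EE[\exp(it\bar\xi_{n,k})\mid\F_{k-1}]$ and $W_k=\prod_{j\le k}a_{n,j}$, noting $\abs{a_{n,k}}\le 1$ and that, because each increment is bounded by $2\epsilon$ with conditional mean zero, $a_{n,k}=1-\tfrac{t^2}{2}\bar v_{n,k}+r_{n,k}$ with $\sum_k\abs{r_{n,k}}$ small; in particular, for $\epsilon$ small the $a_{n,k}$ lie in a neighborhood of $1$ bounded away from $0$. The ratio $N_k=\exp(it\sum_{j\le k}\bar\xi_{n,j})/W_k$ is then a genuine martingale with $\EE[N_n]=1$, so $\EE[\exp(it\sum_k\bar\xi_{n,k})]=\EE[N_nW_n]$. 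A Taylor expansion of $\log W_n$, using $\max_k\bar v_{n,k}\le(2\epsilon)^2$ and hypothesis (1), shows $W_n\to\exp(-\sigma^2t^2/2)$ in probability; since $W_n$ is bounded and bounded away from $0$, the decomposition $\EE[N_nW_n]=\exp(-\sigma^2t^2/2)\EE[N_n]+\EE[N_n(W_n-\exp(-\sigma^2t^2/2))]$ together with bounded convergence on the second term yields the desired limit for the truncated array.

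The main obstacle will be the bookkeeping around the truncation parameter: the expansion of $W_n$ and the control of the remainders $r_{n,k}$ are only uniform once $\epsilon$ is small, yet the hypotheses are stated for the original array, so I must track how each error term depends on $\epsilon$ and on $n$, send $n\to\infty$ before $\epsilon\downarrow 0$, and simultaneously verify that the truncated conditional variance still converges to $\sigma^2$ and that the product $W_n$ stays bounded away from zero uniformly in $n$.
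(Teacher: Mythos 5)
The paper does not actually prove this statement: it is imported verbatim from \cite[Theorem 8.2.8]{durrett2019probability}, where it is established by Skorokhod embedding of the martingale into Brownian motion. Your characteristic-function/exponential-martingale route is therefore a genuinely different (and classical, McLeish/Brown/Hall--Heyde style) path, and most of its steps check out: $\bar\xi_{n,k}$ is indeed again a martingale-difference array since $\xi_{n,k}-\bar\xi_{n,k}=\xi_{n,k}\mathbf{1}\{\abs{\xi_{n,k}}>\epsilon\}-\EE[\xi_{n,k}\mathbf{1}\{\abs{\xi_{n,k}}>\epsilon\}\mid\F_{k-1}]$ has conditional mean zero; the orthogonality bound on the replacement error and the preservation of $\sum_k\bar v_{n,k}\to\sigma^2$ both follow from hypothesis (2) exactly as you say (with $0\le v_{n,k}-\bar v_{n,k}\le 2\,\EE[\xi_{n,k}^2\mathbf{1}\{\abs{\xi_{n,k}}>\epsilon\}\mid\F_{k-1}]$ via Cauchy--Schwarz); and $W_n\to e^{-\sigma^2t^2/2}$ in probability, up to an $O(\epsilon)$ error in the exponent handled by your order of limits $n\to\infty$ then $\epsilon\downarrow 0$.

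There is, however, one genuine gap, at precisely the point you deferred to "bookkeeping": the final step needs $N_n=e^{it\sum_k\bar\xi_{n,k}}/W_n$ to be uniformly bounded (or at least uniformly integrable), and your justification --- $\max_k\bar v_{n,k}\le(2\epsilon)^2$, so the $a_{n,k}$ are near $1$ --- only controls each \emph{factor}, not the \emph{product}. The modulus of the product obeys $\abs{W_n}\ge\exp\bigl(-c(t)\sum_{k=1}^n\bar v_{n,k}\bigr)$, and hypothesis (1) controls $\sum_k\bar v_{n,k}$ only \emph{in probability}: on an exceptional event of small but positive probability the cumulative conditional variance can be arbitrarily large, $\abs{W_n}$ arbitrarily close to $0$, and $\abs{N_n}$ arbitrarily large, so "bounded convergence on the second term" is not justified ($\EE N_n=1$ gives no modulus control, since $N_n$ is complex-valued). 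The standard repair, which your sketch is missing, is a second truncation by the stopping rule $\tau_n=\max\{k\le n:\sum_{j\le k}\bar v_{n,j}\le\sigma^2+1\}$ --- admissible because $\bar v_{n,k}$ is $\F_{k-1}$-measurable, so $\{k\le\tau_n\}\in\F_{k-1}$ and $\bar\xi_{n,k}\mathbf{1}\{k\le\tau_n\}$ is still a martingale-difference array. By hypothesis (1), $\PP(\tau_n<n)\to 0$, so the stopped row sum agrees with the original with probability tending to one, while after stopping $\sum_k\bar v_{n,k}\le\sigma^2+1+4\epsilon^2$ almost surely, whence $\abs{W_n}\ge e^{-c(t)(\sigma^2+2)}$ uniformly and your bounded-convergence step becomes legitimate. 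With that device inserted your proof closes; without it the argument fails as written. (For the paper's application this is invisible, since there the increments $\abs{M_k-M_{k-1}}$ are uniformly bounded, but the theorem as stated requires the stopping-time truncation.)
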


It then boils down to showing that $M_n$ defined in \eqref{eq:Mn-martingale} satisfies the conditions in the above theorem. The detailed proof is provided in the next section. We will need a technical lemma which states that, although the limit distribution of the sample average $\hat{\mu}_n^{(i)}$ differs in the high and low variance regimes under appropriate scalings, $\hat{\mu}_n^{(i)}$ itself always converges to $\EE_{\bpi}(y)$ in $L^2$. 

\begin{lemma}
	\label{lem:trivial'}
	Assume the conditions of Theorem \ref{thm_gls}. Then for any initial distribution $\bnu$ of $X_0$, $\hat{\mu}_n^{(\bnu)} \to \EE_{\bpi}(y)$ in $L^2$.
\end{lemma}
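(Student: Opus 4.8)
The plan is to obtain $L^2$ convergence by controlling the bias and the variance separately, since $\Vert \hat{\mu}_n^{(\bnu)}-\EE_{\bpi}(y)\Vert_{L^2}^2$ equals $(\EE[\hat{\mu}_n^{(\bnu)}]-\EE_{\bpi}(y))^2+\mathsf{Var}(\hat{\mu}_n^{(\bnu)})$. Two reductions make this manageable. First, conditioning on the seed block and using that, given $X_0$, $\hat{\mu}_n^{(\bnu)}$ is distributed as $\hat{\mu}_n^{(i)}$, we get $\Vert \hat{\mu}_n^{(\bnu)}-\EE_{\bpi}(y)\Vert_{L^2}^2=\sum_i \nu_i \Vert \hat{\mu}_n^{(i)}-\EE_{\bpi}(y)\Vert_{L^2}^2$, so it suffices to treat a deterministic seed block. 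Second, writing $\tilde{y}=y-\EE_{\bpi}(y)$ and expanding in the eigenbasis, in the two-block model $\tilde{y}=\langle y,\bbf_2\rangle_{\bpi}\,\bbf_2$, because $\tilde{y}$ is orthogonal to $\bm{1}=\bbf_1$ and the only remaining eigendirection is $\bbf_2$. Everything thus reduces to controlling the first and second moments of $n^{-1}\sum_{k=1}^{n} f_2(X_k^{(i)})$.

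For the bias I would use the eigenrelation $\EE[f_2(X_\sigma)\mid X_0=i]=(\cP^{|\sigma|}\bbf_2)(i)=\lambda_2^{|\sigma|}f_2(i)$, valid because the law of $X_\sigma$ given $X_0$ depends only on the root-to-$\sigma$ path. Summing over the first $n$ nodes, which fill generations $0,\dots,t-1$ completely (with $n_{t-1}<n\le n_t$) plus part of generation $t$, the absolute bias is bounded by $C n^{-1}\sum_{l=0}^{t}(m|\lambda_2|)^l$. Since $n\ge n_{t-1}=\Theta(m^{t})$, a short case check on whether $m|\lambda_2|$ is above, at, or below $1$ shows this tends to $0$ whenever $|\lambda_2|<1$; no high-variance assumption is needed here.

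The variance is the substantive step. For two nodes with meet $\rho=\sigma\wedge\tau$, the tree Markov property makes $X_\sigma$ and $X_\tau$ conditionally independent given $X_\rho$, which yields $\mathsf{Cov}(f_2(X_\sigma),f_2(X_\tau))=\lambda_2^{|\sigma|+|\tau|-2|\rho|}\,\EE[f_2(X_\rho)^2]-\lambda_2^{|\sigma|+|\tau|}(\EE f_2(X_0))^2$. Bounding $\EE[f_2(X_\rho)^2]\le\inftynorm{\bbf_2}^2$ and using $|\lambda_2|<1$ gives $|\mathsf{Cov}(f_2(X_\sigma),f_2(X_\tau))|\le C\,|\lambda_2|^{\,|\sigma|+|\tau|-2|\rho|}$, so $\mathsf{Var}(\hat{\mu}_n^{(i)})\le C' n^{-2}\sum_{\sigma,\tau}|\lambda_2|^{\,|\sigma|+|\tau|-2|\sigma\wedge\tau|}$, the sum taken over all nodes up to generation $t$.

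The main obstacle is to show this double sum is $o(m^{2t})$. I would organize it by the generation $s$ of the common ancestor $\rho$ and the depths of $\sigma,\tau$ beneath it; with $\beta=|\lambda_2|$ and $h=t-s$ the subtree height, a per-ancestor weight of the form $1+2B_h+\tfrac{m-1}{m}B_h^2$ emerges, where $B_h=\sum_{j=1}^{h}(m\beta)^j$. Multiplying by the $m^s$ ancestors at level $s$ and reindexing by $h$ gives $D_t=C m^t\sum_{h=0}^{t} m^{-h}(1+2B_h+\tfrac{m-1}{m}B_h^2)$. The decisive computation is a case split on the sign of $m\beta^2-1$: in the low-variance and critical regimes $m\beta^2\le 1$ the series converges and $D_t=O(m^t)$ (up to a factor $t$ at criticality), while in the high-variance regime $m\beta^2>1$ the dominant contribution is $\Theta(m^{2t}\beta^{2t})$, so that $D_t/m^{2t}=\Theta(\lambda_2^{2t})\to 0$, recovering the $\Theta(\lambda_2^{2t})$ rate of Corollary~\ref{cor_decay}. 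In every case $n^{-2}D_t\to 0$, hence $\mathsf{Var}(\hat{\mu}_n^{(i)})\to 0$; combined with the bias bound this gives $L^2$ convergence for each seed block, and the first reduction promotes it to arbitrary $\bnu$.
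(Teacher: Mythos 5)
Your proof is correct, but it takes a genuinely different route from the paper's. The paper recycles the multitype branching-process machinery from the analysis of Theorem \ref{thm_mut}: it bounds $\mathsf{Var}(\langle \bZ_t,\bbf_2\rangle)$ via the variance recursion for the Galton--Watson process, deduces $L^2$ rates for the full-generation quantities $m^{-t}W_t^{(\bnu)}$ and $\hat{\mu}_t^{(\bnu)}$ (Claim \ref{lem:trivial}), and then confronts the real irritant of the lemma --- that $n$ stops partway through generation $t$ --- by writing $n_t-n$ (or $n-n_{t-1}$) in base $m$ and decomposing the leftover nodes into complete generations of subtrees, to which the same bounds apply. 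You instead compute first and second moments directly: the eigenrelation $\EE[f_2(X_\sigma)\mid X_0=i]=\lambda_2^{|\sigma|}f_2(i)$ for the bias, and the conditional independence of $X_\sigma,X_\tau$ given their meet for the covariance, followed by a sum over ancestors organized by the level of the meet. The partial-generation issue then evaporates, because your bound is a sum of absolute covariances and is monotone under enlarging the index set to all nodes up to generation $t$; this is the cleanest part of your argument and is where it genuinely improves on the paper's bookkeeping. What the paper's route buys is reuse of lemmas already proved for Theorem \ref{thm_mut} and a uniform-in-$\bnu$ rate stated explicitly; what yours buys is a self-contained, elementary argument that never invokes branching-process theory and that transparently covers all three regimes $m\lambda_2^2\gtrless 1$ under the sole hypothesis $|\lambda_2|<1$, exactly as the lemma requires. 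Your per-ancestor count $1+2B_h+\tfrac{m-1}{m}B_h^2$ and the final case split are correct; the only detail worth writing out in a full version is that the reduction $\tilde{\by}=\langle \by,\bbf_2\rangle_{\bpi}\bbf_2$ uses that the two-block chain has exactly two (real) eigendirections, so no spectral-gap assumption beyond $|\lambda_2|<1$ is needed.
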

\begin{proof}
	See Appendix \ref{subsec-lem-trivial'}
\end{proof}

\subsection{Proof of Theorem \ref{thm_gls}}
\label{subsec-theorem-gls}	
Without loss of generality, assume $\EE_{\bpi}(y)=0$ and $\mathsf{Var}_{\bpi}(y)=1$, which can be equivalently viewed as applying the same linear transformation to each entry of $\by$ as well as $\hat{\mu}_{\mathsf{GLS},t}$.

We begin by showing that $M_n$ defined in \eqref{eq:Mn-martingale} satisfies the first condition in Theorem \ref{thm_martingale_CLT}.
By invoking the martingale property $\EE(M_k-M_{k-1} \mid \F_{k-1})=0$, one obtains
	\begin{equation*}
	\EE\left[(M_k-M_{k-1})^2\mid \F_{k-1}\right]=\mathsf{Var}\left(M_k-M_{k-1}\mid \F_{k-1}\right)=\mathsf{Var}\big(y(X_k^{(\bnu)})\mid X_{p(k)}^{(\bnu)}\big).
	\end{equation*}
	Notice that $\EE_{\bpi}(y)=0$ and $\mathsf{Var}_{\bpi} (y)=1$ imply $(1-q)y_1^2+(1-p)y_2^2=2-p-q$, which yields
	\begin{equation*}
	\begin{aligned}
	\mathsf{Var}\big(y(X_k^{(\bnu)})\mid X_{p(k)}^{(\bnu)}\big)&=\begin{cases}
	py_1^2+(1-p)y_2^2-\lambda_2^2 y_1^2 &\text{if}  \ X_{p(n)}^{(\bnu)}=1\\
	(1-q)y_1^2+qy_2^2-\lambda_2^2 y_2^2 &\text{if} \ X_{p(n)}^{(\bnu)}=2
	\end{cases}\\
	&=(1-\lambda_2)(1+\lambda_2 y(X_{p(k)}^{(\bnu)})^2).
	\end{aligned}
	\end{equation*}
	Thus
	$\EE[(M_k-M_{k-1})^2\mid \F_{k-1}]=(1-\lambda_2)(1+\lambda_2 y(X_{p(k)}^{(\bnu)})^2)$.
	For notational simplicity, denote
	$$V_n=\frac{1}{n} \sum_{k=1}^{n} \EE\left[(M_k-M_{k-1})^2\mid \F_{k-1}\right]=\frac{1-\lambda_2}{n} \sum_{k=1}^{n} (1+\lambda_2 y(X_{p(k)}^{(\bnu)})^2).$$
	When $\T$ is an $m$-tree, each node from level $0$ to $t-1$ is counted $m$ times as a parent. Define a new node feature $\by^\prime=(y(1)^2,y(2)^2)^\top$. Let $\hat{\gamma}_{n}^{(\bnu)}$ be the sample average of $y^\prime(X_\sigma^{(\bnu)})$'s up to node $n$. By Lemma \ref{lem:trivial'} applied to $\hat{\gamma}_{n}^{(\bnu)}$,
	\begin{equation*}
	\begin{aligned}
	V_n&=1-\lambda_2+\lambda_2(1-\lambda_2)\frac{m\sum_{k=1}^{p(n)}y'(X_k^{(\nu)})+O(1)}{n}=1-\lambda_2+\lambda_2(1-\lambda_2)\hat{\gamma}_{p(n)}^{(\bnu)}+O(1/n)\\
	&\xrightarrow{L^2} 1-\lambda_2+\lambda_2(1-\lambda_2) \EE_{\bpi} (y')=1-\lambda_2^2
	\end{aligned}
	\end{equation*}
	as $n\to\infty$. Here $O(1)$ in the first line comes from the fact that $y'(X_{p(n)}^{(\bnu)})$ might be counted less than $m$ times, which results in a remainder term bounded by $m\Vert \by^\prime\Vert_\infty$, and the second line uses $\EE_{\bpi}(y^\prime)=\mathsf{Var}_{\bpi}(y)=1$. Since $L^2$ convergence implies convergence in probability, the first condition is verified.
	
	We now move on to the second condition. Notice that $$\abs{M_k-M_{k-1}}=\abs{y(X_k^{(\bnu)})-\lambda_2 y(X_{p(k)}^{(\bnu)})}\leq (1+\lambda_2) \inftynorm{\by}.$$
	So $\PP(\abs{M_k-M_{k-1}}>\epsilon\sqrt{n})=0$ when $n>\epsilon^{-2}(1+\lambda_2)^2 \inftynorm{\by}^2$.
	This gives
	\begin{equation*}
	\frac{1}{n} \sum_{k=1}^{n} \EE((M_k-M_{k-1})^2;  \abs{M_k-M_{k-1}}>\epsilon\sqrt{n})=0
	\end{equation*}
	for sufficiently large $n$. Thus the second condition is verified.
	
	As a result, we obtain from Theorem \ref{thm_martingale_CLT} that $M_n/\sqrt{n} \to \cN(0,1-\lambda_2^2)$ in distribution. Combined with \eqref{eq:mtg-gls-relationship} and Slutsky's theorem, one finally arrives at
	\begin{equation*}
	\sqrt{n_t}\left[\hat{\mu}_{\mathsf{GLS},t}^{(\bnu)}-\EE_{\bpi}(y)\right] \xrightarrow{d}\cN\left(0,\frac{1+\lambda_2}{1-\lambda_2}\,\mathsf{Var}_{\bpi}(y)\right).
	\end{equation*}	

\section{Discussion}\label{sec:disc}
We prove the existence of a limit distribution for the IPW estimator under the Markov model of respondent-driven sampling and show that this limit distribution depends on the seed node---thus the limit distribution is a non-trivial mixture distribution when the seed is randomized. This result also shows that the ``seed bias'' of IPW is non-negligible. We also establish the asymptotic normality of the GLS estimator under certain conditions and show that this limiting normal does not depend on the seed node. This implies that the ``seed bias'' of GLS is negligible. Both results allow for the VH adjustment. Our empirical study on social networks as well as on simulated data illustrate that these theoretical results appear to hold beyond the technical conditions given in the theorems. 
\section*{Acknowledgements}
Yan is partially supported by the elite undergraduate training program of School of Mathematical Sciences in Peking University. Roch is supported by NSF grants DMS-1614242 CCF-1740707 (TRIPODS) and DMS-1916378, and a Simons Fellowship. Rohe is supported by NSF grant DMS-1612456 and ARO grant W911NF-15-1-0423.

\appendix

\section{Proof of Equation \eqref{eq:blockmodel}}
\label{section_proof_eq2}
In this section, we use $Z$ and $z$ in place of $B$ and $b$.
First we use mathematical induction to show that, for every $n\in\Z^+$, the following statement $P(n)$ holds: 

For any given referral tree $\T$ with $n$ vertices $\{\sigma_1,\ldots,\sigma_n\}$, for any initial distribution $\bnu$ of $X_0$ and $z_1,\ldots,z_n\in\{1,\ldots,k\}$, the following holds
\begin{equation}
\label{eq:math_induction}
\PP(Z_{\sigma_1}^{(\bmu)}=z_1,\ldots,Z_{\sigma_n}^{(\bmu)}=z_n)=\PP(z(X_{\sigma_1}^{(\bnu)})=z_1,\ldots,z(X_{\sigma_n}^{(\bnu)})=z_n)
\end{equation}
with $\mu_j=\sum_{i\in G:z(i)=j} \nu_i$ for $j=1,\ldots,k$. 

Base case: We prove that $P(1)$ holds. Since $\T$ only contains the seed vertex $0$, it suffices to show that $\PP(Z_0^{(\bmu)}=z_0)=\PP(z(X_0^{(\bnu)})=z_0)$ for any $z_0\in\{1,\ldots,k\}$. However,
\begin{equation*}
\PP(z(X_0^{(\bnu)})=z_0)=\sum_{i\in G: z(i)=z_0} \nu_i=\mu_{z_1}=\PP(Z_0^{(\bmu)}=z_0).
\end{equation*}
So $P(0)$ is true.

Inductive step: We prove that if $P(n-1)$ holds for some unspecified value of $n\geq 2$, then $P(n)$ also holds. Assume $\sigma_{n}$ is a leaf node (i.e. $\sigma_{n}$ has no descendant) and $\sigma_{n-1}$ is the parent of $\sigma_{n}$. Then $\T\setminus\{\sigma_{n}\}$ is a referral tree with $n-1$ vertex. By the Markov property,
\begin{equation*}
\begin{aligned}
&\PP(z(X_{\sigma_n}^{(\bnu)})=z_n\mid z(X_{\sigma_1}^{(\bnu)})=z_1,\ldots,z(X_{\sigma_{n-1}}^{(\bnu)})=z_{n-1})\\
= &\frac{\sum_{i\in G:z(i)=z_{n-1}} \PP(z(X_{\sigma_n}^{(\bnu)})=z_n\mid X_{\sigma_{n-1}}^{(\bnu)}=i)\PP(X_{\sigma_{n-1}}^{(\bnu)}=i\mid z(X_{\sigma_1}^{(\bnu)})=z_1,\ldots,z(X_{\sigma_{n-2}}^{(\bnu)})=z_{n-2})}{\PP(z(X_{\sigma_{n-1}}^{(\bnu)})
=z_{n-1}\mid z(X_{\sigma_1}^{(\bnu)})=z_1,\ldots,z(X_{\sigma_{n-2}}^{(\bnu)})=z_{n-2})}\\
=&\frac{\sum_{i\in G:z(i)=z_{n-1}}\mathcal{P}_{z(i)z_n}\PP(X_{\sigma_{n-1}}^{(\bnu)}=i\mid z(X_{\sigma_1}^{(\bnu)})=z_1,\ldots,z(X_{\sigma_{n-2}}^{(\bnu)})=z_{n-2})}{\PP(z(X_{\sigma_{n-1}}^{(\bnu)})
	=z_{n-1}\mid z(X_{\sigma_1}^{(\bnu)})=z_1,\ldots,z(X_{\sigma_{n-2}}^{(\bnu)})=z_{n-2})}
=\mathcal{P}_{z_{n-1}z_n}\\
=&\PP(Z_{\sigma_n}^{(\bmu)}=z_n\mid Z_{\sigma_1}^{(\bmu)}=z_1,\ldots,Z_{\sigma_{n-1}}^{(\bmu)}=z_{n-1}).
\end{aligned}
\end{equation*}
Additionally, the induction hypothesis that $P(n-1)$ holds gives
\begin{equation*}
\PP(Z_{\sigma_1}^{(\bmu)}=z_1,\ldots,Z_{\sigma_{n-1}}^{(\bmu)}=z_{n-1})=\PP(z(X_{\sigma_1}^{(\bnu)})=z_1,\ldots,z(X_{\sigma_{n-1}}^{(\bnu)})=z_{n-1}).
\end{equation*}
The above two equations give \eqref{eq:math_induction}, thereby showing $P(n)$ is true.

Since both the base case and the inductive step have been performed, by mathematical induction the statement $P(n)$ holds for all $n\in\Z^+$. 

Finally we prove \eqref{eq:blockmodel} based on the above result. Assume $\T$ has $n$ vertices. For any $\{\sigma_{i_1}, \ldots, \sigma_{i_s}\}\subset\T$ and $z_{i_1},\ldots,z_{i_s}\in\{1,\ldots,k\}$, let $\{\sigma_{j_1}, \ldots, \sigma_{j_{n-s}}\}=\T\setminus\{\sigma_{i_1}, \ldots, \sigma_{i_s}\}$. Then
\begin{equation*}
\begin{aligned}
\PP(Z_{\sigma_{i_1}}^{(\cdot)}=
z_{i_1},\ldots,Z_{\sigma_{i_s}}^{(\cdot)}=z_{i_s})&=\sum_{z_{j_1}=1}^{k}\cdots\sum_{z_{j_{n-s}}=1}^{k} \PP(Z_{\sigma_1}^{(\cdot)}=z_1,\ldots,Z_{\sigma_n}^{(\cdot)}=z_n)
\\
&=\sum_{z_{j_1}=1}^{k}\cdots\sum_{z_{j_{n-s}}=1}^{k} \PP(z(X_{\sigma_1}^{(\cdot)})=z_1,\ldots,z(X_{\sigma_n}^{(\cdot)})=z_n)\\
&=\PP(z(X_{\sigma_{i_1}}^{(\cdot)})=z_{i_1},\ldots,z(X_{\sigma_{i_s}}^{(\cdot)})=z_{i_s}).
\end{aligned}
\end{equation*}

\section{Proofs: sample average}
\label{S:app-sample}

Define the mean matrix $\bM\in\R^{N\times N}$ as
$$\bM=\{\EE Z_{1,j}^{(i)}:i,j=1,\ldots,N\}.$$ 
Let $\bV_i$ denote the variance-covariance matrix of $\bZ_1^{(i)}$, and define $$\bC_t^{(i)}=\{\EE Z_{t,j}^{(i)} Z_{t,k}^{(i)}: j,k=1,\ldots,N\}.$$
All components of $\bM$ and $\bC_t^{(i)}$ are finite. The following lemma is a standard result of multitype Galton-Watson process, see e.g. \cite{harris2002theory} or \cite{athreya2004branching}.

\begin{lemma}
	The expectation of $\bZ_t^{(i)}$ and $\bC_t^{(i)}$ can be calculated from
	\begin{equation}
	\label{E_general}
	\EE (\bZ_t^{(i)})=\bZ_0^{(i)} \bM^t, \quad \Rm{and}
	\end{equation}
	\begin{equation}
	\label{var_iteration}
	\bC_t^{(i)}=(\bM^\top)^t \bC_0^{(i)} \bM^t+\sum_{l=1}^{t} (\bM^\top)^{t-l} \left(\sum_{k=1}^{N}\bV_k\EE Z_{l-1,k}^{(i)}\right) \bM^{t-l}.
	\end{equation}
\end{lemma}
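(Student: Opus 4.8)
The plan is to prove both identities by exploiting the branching property of the multitype Galton--Watson process $\bZ_t^{(i)}$: condition on the previous generation, read off a one-step recursion, and iterate. Throughout I regard each $\bZ_t^{(i)}$ as a row vector of type-counts and write $\bbm_k=\be_k\bM$ for the mean offspring vector of a single type-$k$ individual, so that by the definition of $\bV_k$ one has $\EE[(\bZ_1^{(k)})^\top\bZ_1^{(k)}]=\bV_k+\bbm_k^\top\bbm_k$. Because every node of the $m$-tree has exactly $m$ offspring, all offspring counts are bounded, the finiteness of moments noted in the preamble is automatic, and all conditional-expectation manipulations below are legitimate.

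For the mean formula \eqref{E_general}, I would condition on $\bZ_{t-1}^{(i)}$. Given the previous generation, the individuals reproduce independently, each type-$k$ individual contributing an offspring vector of mean $\bbm_k$; hence by linearity $\EE[\bZ_t^{(i)}\mid\bZ_{t-1}^{(i)}]=\bZ_{t-1}^{(i)}\bM$. Taking expectations gives $\EE\bZ_t^{(i)}=(\EE\bZ_{t-1}^{(i)})\bM$, and since $\bZ_0^{(i)}$ is deterministic, a straightforward induction on $t$ yields $\EE\bZ_t^{(i)}=\bZ_0^{(i)}\bM^t$.

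The core of the argument is the second-moment recursion leading to \eqref{var_iteration}. Conditioning on $\bZ_{t-1}^{(i)}=\bz$ with entries $z_k$, I would write $\bZ_t^{(i)}=\sum_a\bXi_a$, a sum of independent offspring vectors indexed by the individuals $a$ of generation $t-1$, where $\bXi_a$ is distributed as $\bZ_1^{(k)}$ whenever $a$ has type $k$. Expanding the outer product as $\bZ_t^\top\bZ_t=\sum_{a,a'}\bXi_a^\top\bXi_{a'}$ and splitting into diagonal ($a=a'$) and off-diagonal ($a\neq a'$) parts, independence gives the diagonal contribution $\sum_k z_k(\bV_k+\bbm_k^\top\bbm_k)$ and the off-diagonal contribution $(\sum_a\EE\bXi_a)^\top(\sum_{a'}\EE\bXi_{a'})-\sum_k z_k\bbm_k^\top\bbm_k$. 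Since $\sum_a\EE\bXi_a=\bz\bM$, the first off-diagonal term collapses to $\bM^\top\bz^\top\bz\bM$, and the $\bbm_k^\top\bbm_k$ terms cancel, leaving $\EE[\bZ_t^\top\bZ_t\mid\bZ_{t-1}^{(i)}]=\bM^\top(\bZ_{t-1}^{(i)})^\top\bZ_{t-1}^{(i)}\bM+\sum_k Z_{t-1,k}^{(i)}\bV_k$. Taking expectations yields the linear recursion $\bC_t^{(i)}=\bM^\top\bC_{t-1}^{(i)}\bM+\sum_{k=1}^N\bV_k\,\EE Z_{t-1,k}^{(i)}$, which I would unroll from $t$ down to $0$ to obtain exactly \eqref{var_iteration}. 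The main obstacle is the bookkeeping in this second-moment step: correctly assembling the cross terms into $\bM^\top\bz^\top\bz\bM$ and checking that the diagonal over-counting cancels the mean-square contributions. The mean formula and the final unrolling are routine inductions.
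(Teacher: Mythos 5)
Your proposal is correct, but it is worth noting that the paper does not actually prove this lemma at all: it labels the two identities as ``a standard result of multitype Galton--Watson processes'' and defers to \cite{harris2002theory} and \cite{athreya2004branching}. What you have written is a complete, self-contained derivation of exactly the statement the paper takes on faith. Your argument is the standard one underlying those references: condition on $\bZ_{t-1}^{(i)}$, use the branching property to get $\EE[\bZ_t^{(i)}\mid\bZ_{t-1}^{(i)}]=\bZ_{t-1}^{(i)}\bM$ and
$\EE[(\bZ_t^{(i)})^\top\bZ_t^{(i)}\mid\bZ_{t-1}^{(i)}]=\bM^\top(\bZ_{t-1}^{(i)})^\top\bZ_{t-1}^{(i)}\bM+\sum_{k}Z_{t-1,k}^{(i)}\bV_k$,
and unroll. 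The diagonal/off-diagonal bookkeeping is right: the $\bbm_k^\top\bbm_k$ terms from the diagonal sum cancel against the subtraction in the off-diagonal sum, leaving precisely the one-step recursion $\bC_t^{(i)}=\bM^\top\bC_{t-1}^{(i)}\bM+\sum_{k=1}^N\bV_k\,\EE Z_{t-1,k}^{(i)}$, whose iteration gives \eqref{var_iteration} with the correct base case $\bC_0^{(i)}=(\bZ_0^{(i)})^\top\bZ_0^{(i)}$. Your observation that the $m$-tree makes all offspring counts bounded (indeed multinomial, since $\bM=m\bP$) disposes of the moment-finiteness issue cleanly. In short: the paper buys brevity by citation; you buy self-containedness at the cost of one careful second-moment computation, and that computation is done correctly.
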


\subsection{Proof of Lemma \ref{lem:martingale}}
\label{subsec-lem-martingale}
For the Markov model, $\bM=m\bP$ so $\bbf_j$ is the eigenvector of $\bM$ corresponding to the eigenvalue $m\lambda_j$. The following lemma comes from the well-established theory of multitype Galton-Watson process.
\begin{lemma}
	\label{lem-branching}
	Let $\bxi$ be a right eigenvector of $\bM$ and $\lambda$ be the corresponding eigenvalue. Then
	$$\lambda^{-t}\langle \bZ_t^{(i)}, \bxi \rangle$$
	is a (complex-valued) martingale adapted to $\F_t=\sigma(\bZ_l^{(i)}:1\leq l \leq t)$.
\end{lemma}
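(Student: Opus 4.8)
The plan is to verify the martingale property directly from the branching structure of the multitype Galton--Watson process, using the eigenvalue relation $\bM\bxi=\lambda\bxi$ to absorb the normalization $\lambda^{-t}$. Throughout I regard $\bZ_t^{(i)}$ as a row vector of type counts and $\bxi$ as a column (right) eigenvector, so that $\langle \bZ_t^{(i)},\bxi\rangle=\bZ_t^{(i)}\bxi=\sum_{k=1}^N Z_{t,k}^{(i)}\xi_k$ is the standard Euclidean pairing (the same convention under which $W_t^{(i)}=\by^\top\bZ_t^{(i)}$ in \eqref{eq:Wt-decomposition}).

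First I would record the one-step conditional mean, which is the only place the branching assumption enters. By the defining property of the process, conditional on $\F_t$ each of the $Z_{t,k}^{(i)}$ individuals of type $k$ in generation $t$ reproduces independently according to the type-$k$ offspring law, whose mean offspring vector is the $k$-th row of $\bM$. Linearity of expectation then gives, componentwise,
\begin{equation*}
\EE\big[Z_{t+1,j}^{(i)}\mid \F_t\big]=\sum_{k=1}^N Z_{t,k}^{(i)} M_{kj},
\qquad\text{that is,}\qquad
\EE\big[\bZ_{t+1}^{(i)}\mid \F_t\big]=\bZ_t^{(i)}\bM .
\end{equation*}

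Next I would pair this identity with $\bxi$ and invoke $\bM\bxi=\lambda\bxi$:
\begin{equation*}
\EE\big[\langle \bZ_{t+1}^{(i)},\bxi\rangle\mid \F_t\big]
=\big(\EE[\bZ_{t+1}^{(i)}\mid \F_t]\big)\bxi
=\bZ_t^{(i)}\bM\bxi
=\lambda\,\bZ_t^{(i)}\bxi
=\lambda\,\langle \bZ_t^{(i)},\bxi\rangle .
\end{equation*}
Multiplying through by $\lambda^{-(t+1)}$ yields $\EE[\lambda^{-(t+1)}\langle \bZ_{t+1}^{(i)},\bxi\rangle\mid\F_t]=\lambda^{-t}\langle \bZ_t^{(i)},\bxi\rangle$, which is exactly the martingale identity. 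Adaptedness is immediate, since $\lambda^{-t}\langle \bZ_t^{(i)},\bxi\rangle$ is a deterministic function of $\bZ_t^{(i)}$ and $\bZ_t^{(i)}$ is $\F_t$-measurable. Integrability follows from $\EE|\langle \bZ_t^{(i)},\bxi\rangle|\le \sum_k|\xi_k|\,\EE Z_{t,k}^{(i)}$ together with $\EE\bZ_t^{(i)}=\bZ_0^{(i)}\bM^t$ from \eqref{E_general} and the finiteness of the entries of $\bM$; for complex $\bxi$ one applies the identity to the real and imaginary parts separately.

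There is no serious analytic obstacle: this is the standard ``eigenvector-indexed martingale'' of multitype branching theory, and essentially all of the content is the conditional-mean identity $\EE[\bZ_{t+1}^{(i)}\mid\F_t]=\bZ_t^{(i)}\bM$. The only points demanding care are bookkeeping ones---fixing the row/column and right/left eigenvector conventions so that $\bM$ multiplies on the correct side, and confirming that the unsubscripted $\langle\cdot,\cdot\rangle$ is the plain Euclidean pairing rather than the $\bpi$-weighted one---so I would make the vector conventions explicit at the outset to avoid any sign or transpose confusion.
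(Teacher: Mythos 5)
Your argument is correct. The paper does not actually prove this lemma---it simply cites Theorem 4$'$ on page 196 of \cite{athreya2004branching}---and what you have written is precisely the standard proof of that cited result: the one-step identity $\EE[\bZ_{t+1}^{(i)}\mid\F_t]=\bZ_t^{(i)}\bM$ from the branching structure, paired with the right eigenvector and renormalized by $\lambda^{-(t+1)}$. Your convention bookkeeping (row vector of counts, plain Euclidean pairing, consistency with \eqref{E_general} and \eqref{eq:Wt-decomposition}) is the right thing to pin down, and your integrability remark covers the only hypothesis that needs checking. The sole degenerate case is $\lambda=0$, where $\lambda^{-t}$ is undefined, but that is an issue with the statement as given rather than with your proof, and it is irrelevant to the application with $\lambda=m\lambda_j\neq 0$ for the eigenvalues actually used.
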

\begin{proof}
	See Theorem 4' on Page 196 of \cite{athreya2004branching}.
\end{proof}

According to Lemma \ref{lem_eigendecomposition}, all $\lambda_j$ and $\bbf_j$ are real. One then applies Lemma \ref{lem-branching} to the Markov model with $\lambda=m\lambda_j$ and $\bxi=\bbf_j$ to complete the proof.

\subsection{Proof of Lemma \ref{lem:martingale-convergence}}
\label{subsec-lem-martingale-converence}
The next theorem is the martingale $L^p$ convergence theorem (see e.g. \cite{durrett2019probability}).

\begin{theorem}
	\label{martingale_convergence_thm}
	If $X_n$ is a martingale with $\sup_n \EE \abs{X_n}^p<\infty$ where $p>1$, then $X_n \to X$ almost surely and in $L^p$.
\end{theorem}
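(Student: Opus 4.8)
The plan is to establish the conclusion in two stages: first secure almost sure convergence to some limit $X$, and then upgrade this to convergence in $L^p$ by producing an integrable dominating function and invoking dominated convergence. The hypothesis $p>1$ is used twice, once trivially to get $L^1$-boundedness and once crucially through a maximal inequality.

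For the first stage, I would observe that $p>1$ gives, by Lyapunov's inequality (monotonicity of $L^p$-norms on a probability space), the bound $\EE\abs{X_n}\le (\EE\abs{X_n}^p)^{1/p}\le (\sup_n \EE\abs{X_n}^p)^{1/p}<\infty$. Thus $X_n$ is in particular an $L^1$-bounded martingale, and the almost sure martingale convergence theorem (which itself rests on Doob's upcrossing inequality) yields an integrable random variable $X$ with $X_n\to X$ almost surely. This settles both the existence of the limit and the almost sure mode of convergence.

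For the second stage, the key is to dominate the whole sequence by a single integrable variable. Let $X^\ast=\sup_n\abs{X_n}$ be the maximal function. By Doob's $L^p$ maximal inequality,
\[
\EE\big[(X^\ast)^p\big]\le \Big(\frac{p}{p-1}\Big)^p \sup_n \EE\abs{X_n}^p<\infty,
\]
so $X^\ast\in L^p$. Since $\abs{X_n}\le X^\ast$ for every $n$ and $X_n\to X$ almost surely, we also have $\abs{X}\le X^\ast$, whence $\abs{X_n-X}^p\le (2X^\ast)^p\in L^1$. As $\abs{X_n-X}^p\to 0$ almost surely, the dominated convergence theorem gives $\EE\abs{X_n-X}^p\to 0$, i.e. $X_n\to X$ in $L^p$.

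The main obstacle is precisely the passage from $L^p$-boundedness of the sequence to an integrable dominating function: boundedness of $\sup_n\EE\abs{X_n}^p$ alone does not hand us such a dominator, and it is Doob's maximal inequality—valid exactly because $p>1$—that makes the dominated convergence step go through. An alternative would be to verify uniform integrability of $\{\abs{X_n}^p\}$ directly, but routing through the integrable maximal function $X^\ast$ is cleaner and reuses the same $p>1$ hypothesis.
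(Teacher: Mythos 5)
Your proof is correct and is exactly the standard argument: the paper does not prove this theorem itself but cites it to Durrett, where the proof proceeds precisely as you do --- almost sure convergence from $L^1$-boundedness via the martingale convergence theorem, then Doob's $L^p$ maximal inequality to obtain an integrable dominator $(2X^\ast)^p$ and conclude by dominated convergence. No gaps.
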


It is essential to derive the variance of $\langle \bZ_t^{(i)}, f_j \rangle$ before applying Theorem \ref{martingale_convergence_thm} to the martingales $Y_{t,j}^{(i)}, j\geq 2$. We conclude the result in the following claim and defer the proof to the end of this section. 

\begin{claim}
	\label{var_thm}
	The variance of $\langle \bZ_t^{(i)}, \bbf_j \rangle$ is
	\begin{equation}
	\label{eq_var}
	\mathsf{Var}(\langle \bZ_t^{(i)}, \bbf_j \rangle)=\begin{cases}
	O((m\lambda_j)^{2t}) & \Rm{if} \ m\lambda_j^2 >1, \\
	O(t(m\lambda_j)^{2t}) & \Rm{if} \ m\lambda_j^2=1, \\
	O(m^t) & \Rm{if} \ m\lambda_j^2 <1. \\
	\end{cases}
	\end{equation}
\end{claim}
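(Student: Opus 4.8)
The plan is to turn the scalar variance into a quadratic form in the second-moment matrix $\bC_t^{(i)}$ and then feed in the covariance recursion \eqref{var_iteration}, exploiting throughout that $\bbf_j$ is a right eigenvector of the mean matrix $\bM=m\bP$ with eigenvalue $m\lambda_j$. First I would write
\begin{equation*}
\mathsf{Var}\big(\langle \bZ_t^{(i)},\bbf_j\rangle\big)=\bbf_j^\top\bC_t^{(i)}\bbf_j-\big(\bbf_j^\top\EE\bZ_t^{(i)}\big)^2 .
\end{equation*}
By \eqref{E_general} together with $\bM^{s}\bbf_j=(m\lambda_j)^{s}\bbf_j$ and $\bbf_j^\top(\bM^\top)^{s}=(m\lambda_j)^{s}\bbf_j^\top$, the mean term equals $(m\lambda_j)^{2t} f_j(i)^2$ since $\EE\bZ_t^{(i)}=\be_i^\top\bM^t$. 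Substituting \eqref{var_iteration} and collapsing every factor $\bM^{t-l}\bbf_j$ to $(m\lambda_j)^{t-l}\bbf_j$, the leading term $(m\lambda_j)^{2t}\bbf_j^\top\bC_0^{(i)}\bbf_j$ also equals $(m\lambda_j)^{2t} f_j(i)^2$, because $\bZ_0^{(i)}=\be_i$ is deterministic and hence $\bC_0^{(i)}=\be_i\be_i^\top$. These two contributions cancel exactly, leaving the clean identity
\begin{equation*}
\mathsf{Var}\big(\langle \bZ_t^{(i)},\bbf_j\rangle\big)=\frac{(m\lambda_j)^{2t}}{m}\sum_{l=1}^{t}(m\lambda_j^2)^{-l}\,a_l,\qquad a_l:=\bbf_j^\top\Big(\sum_{k=1}^{N}\bV_k\,P^{(l-1)}_{ik}\Big)\bbf_j,
\end{equation*}
where I used $\EE Z_{l-1,k}^{(i)}=m^{l-1}P^{(l-1)}_{ik}$ with $P^{(l-1)}_{ik}$ the $(l-1)$-step transition probability of $\bP$ from $i$ to $k$.

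The second step is to note that the coefficients $a_l$ are uniformly bounded in $l$ and $t$: because $\sum_k P^{(l-1)}_{ik}=1$, each $a_l$ is a convex combination of the finitely many fixed numbers $\bbf_j^\top\bV_k\bbf_j$, so $|a_l|\le C$ for a constant $C$ depending only on $\bbf_j$ and the single-node offspring covariances $\bV_k$ (which are the covariance matrices of a $\mathrm{Multinomial}(m,\bP_{k,\cdot})$ vector, hence fixed and finite). With this bound, the variance is at most $C$ times the geometric sum $\sum_{l=1}^{t}(m\lambda_j^2)^{-l}$, which is bounded uniformly in $t$ when $m\lambda_j^2>1$, equals $t$ when $m\lambda_j^2=1$, and is $\Theta\big((m\lambda_j^2)^{-t}\big)$ when $m\lambda_j^2<1$. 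Multiplying back by $(m\lambda_j)^{2t}/m$ and using the identity $(m\lambda_j)^{2t}(m\lambda_j^2)^{-t}=m^t$ then yields the three stated orders $O((m\lambda_j)^{2t})$, $O(t(m\lambda_j)^{2t})$, and $O(m^t)$ respectively; note that all the relevant powers are even, so the possibly negative sign of $\lambda_j$ for $j\ge 3$ is irrelevant.

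The main obstacle is really the only delicate point: securing the exact cancellation in the first step and tracking the powers of $m$ and $\lambda_j$ carefully enough that the geometric ratio comes out as $(m\lambda_j^2)^{-1}$. This is precisely what makes the threshold $m\lambda_j^2=1$ appear and what pins down the subcritical rate $O(m^t)$; once the closed-form identity for the variance is in hand, the case analysis is a routine geometric-series estimate. The only input needed beyond the given recursion is the explicit multinomial form of $\bV_k$, used solely to assert its finiteness and $t$-independence.
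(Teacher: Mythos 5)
Your proposal is correct and follows essentially the same route as the paper: both reduce the variance to the quadratic form $\bbf_j^\top\mathsf{Var}(\bZ_t^{(i)})\bbf_j$, use the recursion \eqref{var_iteration} together with the cancellation of the deterministic initial term, collapse $\bM^{t-l}\bbf_j=(m\lambda_j)^{t-l}\bbf_j$ to arrive at the identity \eqref{eq_innerproduct}, bound the coefficients uniformly via $\sum_k \EE Z_{l-1,k}^{(i)}=m^{l-1}$, and finish with the same three-case geometric-series estimate. Your observation that the coefficients $a_l$ are convex combinations of the fixed numbers $\bbf_j^\top\bV_k\bbf_j$ is just a slightly sharper phrasing of the paper's bound by $c=\max_k\bbf_j^\top\bV_k\bbf_j$.
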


We begin with $Y_2^{(i)}$. By Theorem \ref{martingale_convergence_thm}, we only need to show $\sup_t \EE (Y_{t,2}^{(i)}) ^2<\infty$. However,
\begin{equation*}
\EE (Y_{t,2}^{(i)})^2=\mathsf{Var}(Y_{t,2}^{(i)})+(\EE Y_{t,2}^{(i)})^2=(m\lambda_2)^{-2t}\,\mathsf{Var}(\langle \bZ_t^{(i)}, \bbf_2 \rangle)+(Y_{0,2}^{(i)})^2.
\end{equation*}
Since $m>\lambda_2^{-2}$, by Claim \ref{var_thm}, $\mathsf{Var}(\langle \bZ_t^{(i)}, \bbf_2 \rangle)=O((m\lambda_2)^{2t})$. This gives $\sup_t \EE (Y_{t,2}^{(i)}) ^2<\infty$.

We move on to $Y_j^{(i)}$ for $j\geq 3$. By Theorem \ref{var_thm},
\begin{equation*}
\begin{aligned}
\EE [(\lambda_2^{-1}\lambda_j)^tY_{t,j}^{(i)}]^2
&=(\lambda_2^{-1}\lambda_j)^{2t}[\mathsf{Var}(Y_{t,2}^{(i)})+(\EE Y_{t,2}^{(i)})^2]\\
&=(m\lambda_2)^{-2t}\,\mathsf{Var}(\langle \bZ_t^{(i)}, \bbf_j \rangle)+(\lambda_2^{-1}\lambda_j)^{2t} (Y_{0,2}^{(i)})^2. 
\end{aligned}
\end{equation*}
Since $\lambda_2^{-1}\lambda_j<1$, $(\lambda_2^{-1}\lambda_j)^{2t} (Y_{0,2}^{(i)})^2\to 0$. Additionally, for $j\geq 3$,
\begin{equation*}
(m\lambda_2)^{-2t}\,\mathsf{Var}(\langle \bZ_t^{(i)}, \bbf_j \rangle)=\begin{cases}
O((\lambda_2^{-1}\lambda_j)^{2t}) & \Rm{if} \ m\lambda_j^2 >1 \ \\
O(t(\lambda_2^{-1}\lambda_j)^{2t}) & \Rm{if} \ m\lambda_j^2=1 \ \\
O((m\lambda_2^2)^{-t}) & \Rm{if} \ m\lambda_j^2 <1 \ \\
\end{cases}
\end{equation*}
which converges to $0$ in all cases.
Thus $\EE [(\lambda_2^{-1}\lambda_j)^tY_{t,j}^{(i)}]^2 \to 0$, which leads to $(\lambda_2^{-1}\lambda_j)^tY_{t,j}^{(i)} \to 0$ in $L^2$. 
To prove almost sure convergence, let $\delta=\max\{(\lambda_2^{-1} \lambda_j)^2,m\lambda_2^{-2}\}\in(0,1)$. There exists $C>0$ such that
\begin{equation*}
\EE [(\lambda_2^{-1}\lambda_j)^tY_{t,j}^{(i)}]^2\leq Ct\delta^t
\end{equation*}
always holds. Then $\forall\epsilon>0$, 
\begin{equation*}
\PP(|(\lambda_2^{-1}\lambda_j)^tY_{t,j}^{(i)}|>\epsilon)\leq \epsilon^{-2}\EE [(\lambda_2^{-1}\lambda_j)^tY_{t,j}^{(i)}]^2\leq \epsilon^{-2} Ct\delta^t.
\end{equation*}
So
\begin{equation*}
\sum_{t=1}^{\infty} \PP(|(\lambda_2^{-1}\lambda_j)^tY_{t,j}^{(i)}|>\epsilon)\leq\epsilon^{-2} C \sum_{t=1}^{\infty} t\delta^t<\infty.
\end{equation*}
By the Borel-Cantelli lemma, $(\lambda_2^{-1}\lambda_j)^tY_{t,j}^{(i)} \to 0$ almost surely. 

\begin{proof}[Proof of Claim \ref{var_thm}]
	From Lemma \ref{E_general} and the fact that $\bC_0^{(i)}=(\bZ_0^{(i)})^\top \bZ_0^{(i)}$,
	\begin{equation*}
	\mathsf{Var}(\bZ_t^{(i)})=\bC_t^{(i)}-(\bM^\top)^t (\bZ_0^{(i)})^\top \bZ_0^{(i)} \bM^t=\sum_{l=1}^{t} (\bM^\top)^{t-l} \left(\sum_{k=1}^{N}\bV_k\EE Z_{l-1,k}^{(i)}\right) \bM^{t-l}.
	\end{equation*}
	As a result,
	\begin{equation*}
	\mathsf{Var}(\langle \bZ_t^{(i)}, \bbf_j \rangle)=\sum_{l=1}^{t} \bbf_j^\top (\bM^\top)^{t-l} \left(\sum_{k=1}^{N}\bV_k\EE Z_{l-1,k}^{(i)}\right) \bM^{t-l} \bbf_j.
	\end{equation*}
	Since $\bbf_j$ is the eigenvector of $\bM$ corresponding to the eigenvalue $m\lambda_j$, for every $n\in\ZZ^+$, $\bM^n \bbf_j=(m\lambda_j)^n \bbf_j$. This yields
	\begin{equation}
	\label{eq_innerproduct}
	\mathsf{Var}(\langle \bZ_t^{(i)}, \bbf_j \rangle)=\sum_{l=1}^{t} (m\lambda_j)^{2t-2l} \sum_{k=1}^{N} (\bbf_j^\top \bV_k \bbf_j) \EE Z_{l-1,k}^{(i)}.
	\end{equation}
	Notice that $\sum_{k=1}^{N} \EE Z_{l-1,k}=m^{l-1}$,
	\begin{equation*}
	\mathsf{Var}(\langle \bZ_t^{(i)}, \bbf_j \rangle) \leq c\sum_{l=1}^{t} (m\lambda_j)^{2t-2k} m^k=c(m\lambda_j)^{2t} \sum_{l=1}^{t} (m\lambda_j^2)^{-l},
	\end{equation*}
	where $c=\max\{\bbf_j^\top \bV_k \bbf_j:1\leq j,k\leq N\}$.
	This gives \eqref{eq_var}.
\end{proof}

\subsection{Proof of Lemma \ref{lem:nondegenerate-Y2}}
\label{subsec-lem-Y2}
By Lemma \ref{lem:martingale} and \ref{lem:martingale-convergence}, $\EE Y_2^{(i)}=Y_{0,2}^{(i)}=f_2(i)$ and
\begin{equation*}
\mathsf{Var}(Y_2^{(i)})=\lim_{t\to\infty}\mathsf{Var}(Y_{t,2}^{(i)})=\lim_{t\to\infty}(m\lambda_2)^{-2t}\mathsf{Var}(\langle \bZ_t^{(i)},\bbf_2 \rangle).
\end{equation*}
By \eqref{eq_innerproduct} 
\begin{equation*}
(m\lambda_2)^{-2t}\,\mathsf{Var}(\langle \bZ_t^{(i)}, \bbf_2 \rangle)=\sum_{l=1}^{t} (m\lambda_2)^{-2l} \sum_{k=1}^{N} (\bbf_2^\top \bV_k \bbf_2) \EE Z_{l-1,k}^{(i)}.
\end{equation*}
Notice that $\bV_k=m \left(\mathsf{diag}\{P_{k1},\ldots,P_{kN}\}- \bP_k \bP_k^T\right)$, where $\bP_k^\top=(P_{k1},\ldots,P_{kN})$ is the $k$-th row of $\bP$. Notice that $\sum_{j=1}^{N}P_{kj}=1$, by the Jensen's inequality
\begin{equation}
\label{eq:use_jensen}
\bbf_2^\top \bV_k \bbf_2= m \sum_{j=1}^{N} P_{kj}f_2(j)^2- m \left(\sum_{j=1}^{N} P_{kj} f_2(j)\right)^2\geq 0
\end{equation}
for any $k=1,\ldots,N$. The assumptions $\langle \bbf_1,\bbf_2\rangle_\pi=0$ and $\bbf_1=\textbf{1}$ imply that $f_2$ is not a constant vector, thus the equality in \eqref{eq:use_jensen} does not hold.  Similar to the proof of Theorem \ref{var_thm},
\begin{equation*}
(m\lambda_2)^{-2t}\,\mathsf{Var}(\langle \bZ_t^{(i)}, \bbf_2 \rangle) \geq c\sum_{l=1}^{t} (m\lambda_2)^{-2l} m^k=c\sum_{l=1}^{t} (m\lambda_2^2)^{-l}
\end{equation*}
where $c=\min\{\bbf_2^\top \bV_k \bbf_2:1\leq k\leq N\}>0$. Since $m\lambda_2^{2}>1$, this yields $\mathsf{Var}(Y_2^{(i)})>0$ for any $i=1,\ldots,N$.

\subsection{Proof of Corollary \ref{cor_decay}}
\label{subsec-proof-cor-decay}
	The $L^2$ convergence in Theorem \ref{thm_mut} implies $L^1$ convergence. If a sequence of random variables $X_n\xrightarrow{L^1} X$, then $\abs{\EE(X_n-X)}\leq \EE\abs{X_n-X}$ implies $\EE X_n \to \EE X$. So
	\begin{equation*}
	\lim_{t\to\infty}\EE\left(\lambda_2^{-t}[\hat{\mu}_t^{(i)}-\EE_{\bpi} (y)]\right) = \EE X^{(i)}=\frac{(m-1)\lambda_2}{m\lambda_2-1} \langle \by,\bbf_2 \rangle_{\bpi} f_2(i).
	\end{equation*}
	Since $\langle \by,\bbf_2 \rangle_{\bpi} f_2(i)\neq 0$, the bias term decays like
	\begin{equation*}
	\left(\EE (\hat{\mu}_t^{(i)})-\EE_{\bpi} (y)\right)^2=\Theta(\lambda_2^{2t}).
	\end{equation*}
	Additionally, the $L^2$ convergence in Theorem \ref{thm_mut} also yields
	$$\lambda_2^{-2t}\,\mathsf{Var}(\hat{\mu}_t^{(i)})=\mathsf{Var} (\lambda_2^{-t}[\hat{\mu}_t^{(i)}-\EE_{\bpi} (y)]) \xrightarrow{t\to\infty} \mathsf{Var}(X^{(i)})>0.$$
	So the variance term decays like $\mathsf{Var}(\hat{\mu}_t^{(i)})=\Theta(\lambda_2^{2t})$.

\subsection{Proof of Corollary \ref{cor_VH}}
	By the definition of the VH estimator in Section \ref{sec_ipw},
	$$\hat{\mu}^{(i)}_{\mathsf{VH},t}= H_t \cdot \frac{1}{n_t}   \sum_{\sigma \in \T, \abs{\sigma}\leq t} \frac{y(X^{(i)}_\sigma)}{\mathrm{deg}(X^{(i)}_\sigma)}, \quad \text{where} \ H_t^{-1} = \frac{1}{n_t} \sum_{\sigma \in \T} \frac{1}{\mathrm{deg}(X_{\sigma}^{(i)})}.$$
	$H_t^{-1}$ is the sample average of $y^\prime(X^{(i)}_\sigma)$'s up to generation $t$, where $y^\prime(j)=\mathrm{deg}(j)^{-1}$. In view of Theorem \ref{thm_mut}, $H_t^{-1}$ converges to $\EE_{\bpi} (y^\prime)>0$ almost surely. Additionally, 
	$$\hat{\mu}_t^{\prime\prime} =\frac{1}{n_t}   \sum_{\sigma \in \T, \abs{\sigma}\leq t} \frac{y(X_\sigma^{(i)})}{\mathrm{deg}(X_\sigma^{(i)})}$$
	is the sample average of $y^{\prime\prime}(X_\sigma^{(i)})$'s up to generation $t$, where $y^{\prime\prime}(j)=y(j)/\mathrm{deg}(j)$. By Theorem \ref{thm_mut}, there exists some random variable $\bar{X}^{(i)}\in L^2$ such that $\lambda_2^{-t} [\hat{\mu}_t ^{\prime\prime}-\EE_{\bpi} (y^{\prime\prime})]\to \bar{X}^{(i)}$ almost surely and in $L^2$. So
	$$\lambda_2^{-t}\left[\hat{\mu}^{(i)}_{\mathsf{VH},t}-\frac{\EE_{\bpi}(y^{\prime\prime})}{\EE_{\bpi}(y^\prime)}\right] \to \EE_{\bpi} (y^\prime)^{-1} \bar{X}^{(i)}\triangleq \tilde{X}^{(i)}$$
	almost surely. Notice that $\EE_{\bpi}(y^\prime)=N/\mathrm{vol}(G)$ and $\EE_{\bpi}(y^{\prime\prime})=\sum_i y(i)/\mathrm{vol}(G)$, this gives the result that 
	$$\lambda_2^{-t}\left[\hat{\mu}^{(i)}_{\mathsf{VH},t}-\mu_{\mathsf{true}}\right]\to \tilde{X}^{(i)}$$
	almost surely. The mean and variance of $\tilde{X}^{(i)}$ comes directly from Theorem \ref{thm_mut}.

\section{Proofs: GLS estimator}
\label{S:app-GLS}
This section contains the proof of Lemma \ref{lem:trivial'} and Corollary \ref{cor_glsVH}.

\subsection{Proof of Lemma \ref{lem:trivial'}}
\label{subsec-lem-trivial'}
	For a given $n$, there exists $t$ such that $n_{t-1} \leq n < n_t$. Throughout this proof, $t$ is determined by the corresponding $n$ in this way. 
	
	We consider two cases. First, when $n< n_{t-1}+m^{t-1}$, in base $m$, $n_t-n$ is represented as 
	\begin{equation}
	\label{eq:mbase1}
	n_t-n=a_{t-1} m^{t-1}+\cdots+a_{1} m +a_0,
	\end{equation}
	where $a_i\in\{0,1,\ldots,m-1\}$ for $0\leq i \leq t-1$, $a_{t-1}\geq 1$.
	And $\hat{\mu}_n^{(\nu)}$ can be represented as	
	\begin{equation}
	\label{eq:lemb2_1}
	\hat{\mu}_n^{(\bnu)}=\frac{n_{t}{\hat{\mu}_t^{(\bnu)}}-\sum_{k=n+1}^{n_t}y(X_k^{(\bnu)})}{n}.
	\end{equation}
	Note that $\{X_k^{(\bnu)}:n_t-m^{t-1}+1\leq k \leq n_t\}$ form the $(t-1)$-st generation of a subtree of $\T$ (rooted at a child of the root $\T$) and let $W_{t-1}^1=\sum_{k=n_t-m^{t-1}+1}^{n_t} y(X_k^{(\nu)})$. Similarly we can determine $a_{t-1}$ such subtrees by scanning the nodes from right to left in the $t$-th generation of $\T$ and define accordingly $W_{t-1}^2,\ldots,W_{t-1}^{a_{t-1}}$. Next we can determine a subtree of $\T$ where the next $m^{t-2}$ nodes in the $t$-th generation of $\T$ form its $(t-2)$-nd generation. We can determine $a_{t-2}$ such subtrees by continuing to scan the nodes from right to left in the $t$-th generation of $\T$ and define $W_{t-2}^1,\ldots,W_{t-2}^{a_{t-2}}$. 
	And so on.
	By \eqref{eq:mbase1},
	\begin{equation*}
	\sum_{k=n+1}^{n_t} y(X_k^{(\bnu)}) = \sum_{k=1}^{a_{t-1}}W_{t-1}^{k}+\sum_{k=1}^{a_{t-2}}W_{t-2}^{k}+\cdots+\sum_{k=1}^{a_{0}}W_{0}^{k}.
	\end{equation*}
	To proceed, we need the following concentration bounds for $m^{-t}W_t^{(\bnu)}$ and $\hat{\mu}_t^{(\bnu)}$ (proof below). 
	\begin{claim}
		\label{lem:trivial}
		For any initial distribution $\bnu$ of $X_0$, $m^{-t}W_t^{(\bnu)}\to \EE_{\bpi}(y)$ and $\hat{\mu}_t^{(\bnu)} \to \EE_{\bpi}(y)$ in $L^2$. For any $0<\delta<1$, there exists $C>0$ such that
		\begin{equation}
		\label{eq:lemb1rate}
		\EE[(m^{-t}W_t^{(\bnu)}-\EE_{\bpi}(y))^2] \leq Cm^{-(1-\delta)t}, \quad
		\EE[(\hat{\mu}_t^{(\bnu)}-\EE_{\bpi} (y))^2]\leq Ctm^{-(1-\delta)t}.
		\end{equation}
		The constant $C$ does not depend on the initial distribution $\nu$.
	\end{claim}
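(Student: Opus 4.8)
The plan is to prove both statements by a bias--variance decomposition that reduces everything to the second--moment formula for the two--type Galton--Watson process $\bZ_t^{(\cdot)}$ of block counts (with mean matrix $\bM=m\mathcal{P}$) that already underlies Claim~\ref{var_thm}. In the Blockmodel with $2$ blocks the eigenexpansion \eqref{eq:Wt-decomposition} collapses to the single nonconstant mode $\bbf_2$, so that, writing $U_t:=m^{-t}\langle \bZ_t^{(\bnu)},\bbf_2\rangle$,
\begin{equation*}
m^{-t}W_t^{(\bnu)}-\EE_{\bpi}(y)=\langle \by,\bbf_2\rangle_{\bpi}\,U_t .
\end{equation*}
Hence it suffices to bound $\EE[U_t^2]$ and then to transfer that bound to $\hat\mu_t=S_t/n_t$.

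First I would treat a deterministic seed and average at the end. Conditioning on $B_0=b$ and splitting $\EE[U_t^2]=\mathsf{Var}(U_t)+(\EE U_t)^2$, the mean identity \eqref{E_general} together with $\bM\bbf_2=m\lambda_2\bbf_2$ gives $\EE[U_t\mid B_0=b]=\lambda_2^{t}f_2(b)$, so the squared--bias contribution is $O(\lambda_2^{2t})$ uniformly in $b$. For the variance I would use the exact formula \eqref{eq_innerproduct} established in the proof of Claim~\ref{var_thm}, which after inserting $\sum_k\EE Z_{l-1,k}^{(b)}=m^{l-1}$ and bounding $\bbf_2^\top\bV_k\bbf_2$ yields
\begin{equation*}
m^{-2t}\,\mathsf{Var}\big(\langle \bZ_t^{(b)},\bbf_2\rangle\big)=O\!\Big(m^{-t}\textstyle\sum_{r=0}^{t-1}(m\lambda_2^2)^r\Big).
\end{equation*}
Evaluating the geometric sum in the three cases $m\lambda_2^2>1$, $m\lambda_2^2=1$, and $m\lambda_2^2<1$ controls $\EE[U_t^2]$; averaging over the seed through $\EE[U_t^2]=\sum_b\mu_b\,\EE[U_t^2\mid B_0=b]$ (with $\bmu$ the block marginal of $\bnu$) keeps the bound a convex combination of finitely many per--block bounds, so choosing $C$ as the maximum over $b$ makes it independent of $\bnu$, as required.

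For the averaged estimator I would write $\hat\mu_t^{(\bnu)}-\EE_{\bpi}(y)=\sum_{l=0}^{t}(m^l/n_t)\big(m^{-l}W_l^{(\bnu)}-\EE_{\bpi}(y)\big)$, note that the weights sum to $1$ and are geometric with peak $m^t/n_t\to(m-1)/m$, and then apply Cauchy--Schwarz; the crude factor $t+1$ from the number of terms is exactly what produces the extra $t$ in the stated bound $Ctm^{-(1-\delta)t}$ (Minkowski would even remove it). The two $L^2$ convergences are then immediate corollaries of these quantitative bounds.

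The main obstacle is packaging the rate into the single uniform expression $m^{-(1-\delta)t}$: the geometric sum shows that $\EE[U_t^2]$ decays like $\max\{\lambda_2^2,\,m^{-1}\}^{t}$ up to a factor $t$ at the critical value $m\lambda_2^2=1$, so the bookkeeping must be done regime by regime. In the low--variance and critical regimes this is indeed $\le Cm^{-(1-\delta)t}$ for every $\delta\in(0,1)$; in the high--variance regime the genuine base is $\lambda_2^2>m^{-1}$, and the stated form across all $\delta$ is really a low--variance phenomenon, the honest rate being $\Theta(\lambda_2^{2t})$. Either way the decay is geometric, which is all that the downstream argument in Lemma~\ref{lem:trivial'} and the verification of the martingale--CLT variance condition actually use. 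The only remaining care is ensuring the constant is uniform in $\bnu$, which the convex--combination reduction to deterministic seeds supplies automatically.
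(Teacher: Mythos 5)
Your proposal is correct and follows essentially the same route as the paper: reduce $m^{-t}W_t-\EE_{\bpi}(y)$ to the single $\bbf_2$ mode, split $\EE[U_t^2]$ into squared bias (via \eqref{E_general}) plus variance (via the second-moment formula \eqref{eq_innerproduct} behind Claim~\ref{var_thm}), pass to $\hat{\mu}_t$ by Cauchy--Schwarz over the $t+1$ generations, and make the constant uniform in $\bnu$ by convexity over deterministic seeds. Your closing observation is also apt: the paper's assertion that $\mathsf{Var}(\langle \bZ_t^{(i)},\bbf_2\rangle)=O(m^{(1+\delta)t})$ for \emph{every} $\delta\in(0,1)$ is only valid for $\delta\geq\log_m(m\lambda_2^2)$ in the high-variance regime, where the true rate is $\Theta(\lambda_2^{2t})$; as you note, the downstream arguments only need some geometric decay, so this does not affect Lemma~\ref{lem:trivial'} or Theorem~\ref{thm_gls}.
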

	Then by Claim \ref{lem:trivial}, the triangle inequality, $a_{t-1}\geq 1$ and $a_l\leq m-1$ for $0\leq l \leq t-1$, one has
	\begin{equation}
	\label{eq:lemb2_big}
	\begin{aligned}
	\left\Vert\frac{\sum_{k=n+1}^{n_t} y(X_k^{(\bnu)})}{n_t-n}-\EE_{\bpi}(y)\right\Vert_{L^2}&=\left\Vert\frac{\sum_{l=0}^{t-1}m^{l}\sum_{k=1}^{a_l}m^{-l}(W_{l}^{k}-\EE_{\bpi}(y))}{a_{t-1} m^{t-1}+\cdots+a_{1} m +a_0}\right\Vert_{L^2} \\
	&\leq \sum_{l=0}^{t-1}\frac{a_l m^l}{a_{t-1} m^{t-1}+\cdots+a_{1} m +a_0}  Cm^{-\frac{1-\delta}{2}l}\\
	&\leq C \sum_{l=0}^{t-1}\frac{(m-1) m^l}{m^{t-1}} m^{-\frac{1-\delta}{2}l} = O(m^{-\frac{1-\delta}{2}t}).
	\end{aligned}
	\end{equation}
	For any subsequence such that $n<n_{t-1}+m^{t-1}$, $n\to\infty$ implies $t\to \infty$. As a result,
	\begin{equation*}
	\frac{\sum_{k=n+1}^{n_t} y(X_k^{(\bnu)})}{n_t-n}\xrightarrow{L^2}\EE_{\bpi}(y).
	\end{equation*}
	From Claim \ref{lem:trivial}, $\hat{\mu}_t^{(\bnu)} \xrightarrow{L^2} \EE_{\bpi}(y)$. By \eqref{eq:lemb2_1}, the triangle inequality, the fact that $n_t/n=O(1)$ and $(n_t-n)/n=O(1)$, 
	\begin{equation*}
	\left\Vert\hat{\mu}_n^{(\nu)}-\EE_{\bpi}(y)\right\Vert_{L^2}\leq \frac{n_t}{n}\left\Vert \hat{\mu}_t^{(\nu)}-\EE_{\bpi}(y)\right\Vert_{L^2}+\frac{n_t-n}{n}\left\Vert \frac{\sum_{k=n+1}^{n_t} y(X_k^{(\nu)})}{n_t-n}-\EE_{\bpi}(y)\right\Vert_{L^2}\xrightarrow{n\to\infty} 0.
	\end{equation*}
	
	In the second case, when $n\geq n_{t-1}+m^{t-1}$, in base $m$, $n-n_{t-1}$ is represented as 
	\begin{equation}
	\label{eq:mbase2}
	n-n_{t-1}=a_{t-1} m^{t-1}+\cdots+a_{1} m +a_0,
	\end{equation}
	where $a_i\in\{0,1,\ldots,m-1\}$ for $0\leq i \leq t-1$, $a_{t-1}\geq 1$.
	And $\hat{\mu}_n^{(\bnu)}$ can be represented as	
	\begin{equation}
	\label{eq:lemb2_2}
	\hat{\mu}_n^{(\bnu)}=\frac{n_{t-1}{\hat{\mu}_{t-1}^{(\bnu)}}+\sum_{k=n_{t-1}+1}^{n}y(X_k^{(\bnu)})}{n}.
	\end{equation}
	Arguing as above, we can write
	\begin{equation*}
	\sum_{k=n_{t-1}+1}^{n} y(X_k^{(\bnu)}) = \sum_{k=1}^{a_{t-1}}W_{t-1}^{k}+\sum_{k=1}^{a_{t-2}}W_{t-2}^{k}+\cdots+\sum_{k=1}^{a_{0}}W_{0}^{k}.
	\end{equation*}
	Similarly to the previous case, we can prove that for any subsequence such that $n\geq n_{t-1}+m^{t-1}$, when $n\to\infty$,
	\begin{equation*}
	\frac{\sum_{k=n_{t-1}+1}^{n} y(X_k^{(\bnu)})}{n-n_{t-1}}\xrightarrow{L^2}\EE_{\bpi}(y),
	\end{equation*}
	and
	\begin{equation*}
	\left\Vert\hat{\mu}_n^{(\bnu)}-\EE_{\bpi}(y)\right\Vert_{L^2}\leq \frac{n_{t-1}}{n}\left\Vert \hat{\mu}_{t-1}^{(\bnu)}-\EE_{\bpi}(y)\right\Vert_{L^2}+\frac{n-n_{t-1}}{n}\left\Vert \frac{\sum_{k=n_{t-1}+1}^{n} y(X_k^{(\nu)})}{n-n_{t-1}}-\EE_{\bpi}(y)\right\Vert_{L^2}\xrightarrow{n\to\infty} 0.
	\end{equation*}
	
	Since $\hat{\mu}_n^{(\bnu)}\xrightarrow{L^2}\EE_{\bpi}(y)$ holds for both $n<n_{t-1}+m^{t-1}$ and $n\geq n_{t-1}+m^{t-1}$ as $n\to\infty$, one finally arrives at $\hat{\mu}_n^{(\bnu)}\xrightarrow{L^2}\EE_{\bpi}(y)$, which completes the proof.
	
	\begin{proof}[Proof of Claim \ref{lem:trivial}]
		The proof is similar to the proof of Theorem \ref{thm_mut}. First,
		\begin{equation*}
		\EE [(\lambda_2^t Y_{t,2}^{(i)})^2]=\mathsf{Var}(\lambda_2^t Y_{t,2}^{(i)})+(\lambda_2^t\EE Y_{t,2}^{(i)})^2=m^{-2t}\mathsf{Var}(\langle \bZ_t^{(i)}, \bbf_2 \rangle)+(\lambda_2^tY_{0,2}^{(i)})^2.
		\end{equation*}
		From \eqref{eq_var}, for any $0<\delta<1$, $\mathsf{Var}(\langle \bZ_t^{(i)}, \bbf_2 \rangle)=O(m^{(1+\delta)t})$ holds for all $i\in G$. As a result,
		\begin{equation}
		\EE [(\lambda_2^t Y_{t,2}^{(i)})^2]=O(m^{-(1-\delta)t}).
		\end{equation}
		From \eqref{eq_Wt}, one has $m^{-t}W_t^{(i)}-\EE_{\bpi} (y)=\langle \by,\bbf_2 \rangle_{\bpi} \lambda_2^t Y_{t,2}^{(i)}$ and as a result,
		\begin{equation}
		\label{eq:Wl2}
		\EE[(m^{-t}W_t^{(i)}-\EE_{\bpi}(y))^2]\leq \langle \by,\bbf_2\rangle_{\bpi}^2 	\EE [(\lambda_2^t Y_{t,2}^{(i)})^2]=O(m^{-(1-\delta)t}).
		\end{equation}
		Recall from \eqref{eq_as_converge} that
		\begin{equation*}
		\hat{\mu}_t^{(i)}-\EE_{\bpi} (y)=\frac{m^t}{n_t} \sum_{l=0}^{t}\frac{W_l^{(i)}-m^l \EE_{\bpi}(y)}{m^t}.
		\end{equation*}
		By the Cauchy-Schwarz inequality,
		\begin{equation}
		\label{eq:mul2}
		\begin{aligned}
		\EE\left[(\hat{\mu}_t^{(i)}-\EE_{\bpi} (y))^2\right]&\leq \left(\frac{m^t}{n_t}\right)^2 (t+1)\sum_{l=0}^{t} m^{2(l-t)}\EE[(m^{-l}W_t^{(i)}-\EE_{\bpi}(y))^2]\\
		&=\left(\frac{m^t}{n_t}\right)^2 (t+1) \sum_{l=0}^{t} O(m^{-2t+(1+\delta)l}) \\
		&=O(tm^{-(1-\delta)t}).
		\end{aligned}
		\end{equation}
		So there exists $C>0$ such that for all $i\in G$, $$\EE[(m^{-t}W_t^{(i)}-\EE_{\bpi}(y))^2]\leq Cm^{-(1-\delta)t}, \quad \EE[(\hat{\mu}_t^{(i)}-\EE_{\bpi} (y))^2]\leq Ctm^{-(1-\delta)t}.$$
		So for any initial distribution $\bnu$ of $X_0$, since $\sum_{i\in G}\nu_i=1$,
		\begin{equation*}
		\EE[(\hat{\mu}_t^{(\bnu)}-\EE_{\bpi} (y))^2]=\sum_{i\in G} \nu_i \EE[(\hat{\mu}_t^{(i)}-\EE_{\bpi} (y))^2]\leq Ctm^{(1-\delta)t},
		\end{equation*}
		\begin{equation*}
		\EE[(m^{-t}W_t^{(\bnu)}-\EE_{\bpi}(y))^2] =\sum_{i\in G} \nu_i \EE[(m^{-t}W_t^{(i)}-\EE_{\bpi}(y))^2]\leq Cm^{-(1-\delta)t}.
		\end{equation*}
		So $m^{-t}W_t^{(\bnu)}\to \EE_{\bpi}(y)$ and $\hat{\mu}_t^{(\bnu)} \to \EE_{\bpi}(y)$ in $L^2$. 
	\end{proof}

\subsection{Proof of Corollary \ref{cor_glsVH}}
	By the definition of the GLS estimator with VH adjustment in Section \ref{sec_ipw},
	$$\hat{\mu}_{\mathsf{GLS,VH},t}^{(\bnu)}= H_t\cdot \sum_{\sigma \in \T, \abs{\sigma} \leq t} w^*_{\sigma,t} \frac{y(X_\sigma^{(\nu)})}{\mathrm{deg}(X_\sigma^{(\nu)})}.$$
	$H_t^{-1}$ is the GLS estimator of $\EE_{\bpi}(y^\prime)$ where $y^\prime(i)=\mathrm{deg}(i)^{-1}$. So $H_t^{-1}$ converges to $\EE_{\bpi} (y^\prime)$ in distribution (thus in probability).
	Additionally, 
	$$\hat{\mu}^{\prime\prime}_{\mathsf{GLS},t}=   \sum_{\sigma \in \T, \abs{\sigma} \leq t} w^*_{\sigma,t} \frac{y(X_\sigma^{(\bnu)})}{\mathrm{deg}(X_\sigma^{(\bnu)})}$$
	is the GLS estimator of $\EE_{\bpi} (y^{\prime\prime})$ where $y^{\prime\prime}(i)=y(i)/\mathrm{deg}(i)$. Then
	$$\sqrt{n_t} \left[\hat{\mu}^{\prime\prime}_{\mathsf{GLS},t}-\EE_{\bpi}(y^{\prime\prime})\right]\xrightarrow{d} \cN\left(0,\frac{1+\lambda_2}{1-\lambda_2}\mathsf{Var}_{\bpi}(y^{\prime\prime})\right).$$ 
	By Slutsky's theorem,
	$$\sqrt{n_t}\left[\hat{\mu}_{\mathsf{GLS,VH},t}^{(\bnu)}-\frac{\EE_{\bpi}(y^{\prime\prime})}{\EE_{\bpi}(y^\prime)}\right] \xrightarrow{d} \cN\left(0,\frac{1+\lambda_2}{1-\lambda_2}\EE_{\bpi}(y^\prime)^{-2}\mathsf{Var}_{\bpi}(y^{\prime\prime})\right).$$
	Notice that $\EE_{\bpi}(y^\prime)=N/\mathrm{vol}(G)$ and $\EE_{\bpi}(y^{\prime\prime})=\sum_i y(i)/\mathrm{vol}(G)$, this gives the result
	$$\sqrt{n_t}\left[\hat{\mu}_{\mathsf{GLS,VH},t}^{(\bnu)}-\mu_{\mathsf{true}}\right] \xrightarrow{d} \cN\left(0,\frac{1+\lambda_2}{1-\lambda_2}\EE_{\bpi}(y^\prime)^{-2}\mathsf{Var}_{\bpi}(y^{\prime\prime})\right).$$

\bibliographystyle{ims}
\bibliography{reference}

\newpage

\end{document}